\newcommand{\mytheorem}[3]{%
  \newaliascnt{#1}{#3}
  \newtheorem{#1}[#1]{#2}
  \aliascntresetthe{#1}
  \expandafter\newcommand\csname #1autorefname\endcsname{#2}
}
\renewcommand*\env@cases[1][1.15]{%
  \let\@ifnextchar\new@ifnextchar
  \left\lbrace
    \def\arraystretch{#1}%
    \array{@{}l@{\quad}l@{}}%
  }
  \definecolor{darkgreen}{rgb}{0,0.5,0}
  \definecolor{darkred}{rgb}{0.7,0,0}
  \newif\ifuserdefn@presubmit
    \newcommand*\patchAmsMathEnvironmentForLineno[1]{%
      \expandafter\let\csname old#1\expandafter\endcsname\csname #1\endcsname
      \expandafter\let\csname oldend#1\expandafter\endcsname\csname end#1\endcsname
      \renewenvironment{#1}%
      {\linenomath\csname old#1\endcsname}%
    {\csname oldend#1\endcsname\endlinenomath}}%
    \newcommand*\patchBothAmsMathEnvironmentsForLineno[1]{%
      \patchAmsMathEnvironmentForLineno{#1}%
    \patchAmsMathEnvironmentForLineno{#1*}}%
    \renewcommand{\showlabelsetlabel}[1]{
      \noexpandarg%
      \parbox[t]{\marginparwidth}{\raggedright\upshape\small\color{blue}\expandafter\seqsplit\expandafter#1}
    }
      \newtheoremstyle{scheader}%
      {}
      {}
      {\itshape}
      {}
      {\scshape}
      {.}
      {0.5em}
      {}
      \theoremstyle{definition}
      \newtheorem{defn}{Definition}[section]
      \theoremstyle{scheader}
      \newtheorem{mthm}{Theorem}
      \theoremstyle{remark}
      \newtheorem*{rmk}{Remark}
      \newtheorem{nrmk}{Remark}
      \newtheorem*{claim}{Claim}
      \newtheorem{nclaim}{Claim} 
      \newcommand{\I}{\mathrm{I}}
      \newtheorem{thm}{定理}[section]
      \newtheorem*{rmk}{注记}
      \newtheorem*{claim}{断言}
      \renewcommand{\emph}[1]{\textbf{\texttt{#1}}}
      \newcommand{\I}{\mathrm{I}}
    \newcounter{stepnum}
    \newcommand{\step}{%
      \par%
      \refstepcounter{stepnum}%
      {\scshape Step~\arabic{stepnum}}.%
      \enspace\ignorespaces
    }
    \newcommand{\eps}{\varepsilon}
    \newcommand{\loc}{\mathrm{loc}}
    \def\XXint#1#2#3{{\setbox0=\hbox{$#1{#2#3}{\int}$ }
    \vcenter{\hbox{$#2#3$ }}\kern-.6\wd0}}
    \newcommand{\weakto}{\rightharpoonup}
    \newcommand{\xtext}[1]{\ensuremath{\,\text{ #1 }}}
    \DeclareRobustCommand{\set}[1]{\left\{#1\right\}}
    \newcommand{\II}{\mathrm{II}}
    \renewcommand{\i}{\rm{i}}
    \newcommand{\myquad}[1][1]{\hspace*{#1em}\ignorespaces}
    \numberwithin{equation}{section}
    \renewcommand{\PrintDOI}[1]{%
      DOI {\href{https://doi.org/#1}{#1}}%
      \IfEmptyBibField{volume}{, (to appear in print)}{}%
    }
    \newcommand{\lh}{\,\rule[-.25em]{.5em}{.05em}\rule[-.25em]{.05em}{1em}\,}
    \newcommand{\id}{\operatorname{Id}}
    \newcommand{\Span}[1]{\operatorname{span}\left\{ #1 \right\}}
    \newcommand{\multi}[1]{#1\checknextchar}
    \newcommand{\checknextchar}{\@ifnextchar\bgroup{\raisebox{0.4\height}{\scalebox{0.62}{$\#$}}\multi}{}}
    \title[Boundary value problem for Yang--Mills--Higgs]{The boundary value problem for Yang--Mills--Higgs fields}
    \author{Wanjun Ai}
    \address{School of Mathematics and Statistics,
      Southwest University,
      Chongqing, 400715, P. R. China\\
      \indent School of Mathematical Sciences, %
      Shanghai Jiao Tong University, %
    Shanghai 200240, P. R. China}%
    \email{wanjunai@swu.edu.cn; aiwanjun@sjtu.edu.cn}%
    \author{Chong Song}
    \address{School of Mathematical Sciences, %
      Xiamen University, %
    Xiamen 361005, P. R. China}%
    \email{songchong@xmu.edu.cn}
    \author{Miaomiao Zhu}
    \address{School of Mathematical Sciences, %
      Shanghai Jiao Tong University, %
    Shanghai 200240, P. R. China}%
    \email{mizhu@sjtu.edu.cn}%
    \thanks{Part of this work was carried out when Wanjun Ai was a postdoc at the School of Mathematical Sciences,
      Shanghai Jiao Tong University and he would like to thank the institution for hospitality and financial support. Chong Song is partially supported by the Fundamental Research Funds for the Central Universities (Grant No. 20720170009, 20720180009). Miaomiao Zhu was supported in part by National Natural Science Foundation of China (No. 11601325). We would like to thank the referee for careful comments and helpful suggestions in improving the presentation of the paper.}
    \subjclass[2010]{58E15, 35J50, 35R35}%
    \keywords{Yang--Mills--Higgs, free boundary, Neumann boundary, blow-up, regularity}%
    \date{\today}
\begin{document}
    \begin{abstract}
      We show the existence of Yang--Mills--Higgs (YMH) fields over a Riemann surface with boundary where a free boundary condition is imposed on the section and a Neumann boundary condition on the connection. In technical terms, we study the convergence and blow-up behavior of a sequence of Sacks--Uhlenbeck type $\alpha$-YMH fields as $\alpha\to 1$. For $\alpha>1$, some regularity results for $\alpha$-YMH field are shown. This is achieved by showing a regularity theorem for more general coupled systems, which extends the classical results of Ladyzhenskaya--Ural\cprime ceva and Morrey.
    \end{abstract}
    \maketitle
    \section{Introduction}\label{sec:intro}
    The Yang--Mills--Higgs (YMH) theory arises from the research of electromagnetic phenomena and plays a fundamental role in modern physics, especially in quantum field theories. Due to its remarkable applications in both geometry and topology, the YMH theory has been extensively studied by mathematicians in the last several decades.

    The general YMH theory can be modeled in the following setting. Suppose $\Sigma$ is a Riemannian manifold, $G$ is a compact Lie group with Lie algebra $\mathfrak{g}$, which is endowed with a left-invariant metric, and $\mathcal{P}$ is a $G$-principal bundle on $\Sigma$. Let $F$ be a Riemannian manifold admitting a $G$-action, and $\mathcal{F}=\mathcal{P}\times_G F$ be the associated fiber bundle. Suppose there is a generalized Higgs potential $\mu$ which is a smooth gauge invariant vector-valued function on $\mathcal{F}$. Let $\mathscr{S}$ denote the space of smooth sections of $\mathcal{F}$, and $\mathscr{A}$ denote the affine space of smooth connections on $\mathcal{P}$. Then the YMH functional is defined for a pair $(A,\phi)\in \mathscr{A}\times\mathscr{S}$ by
    \[
      \mathcal{L}(A,\phi)\mathpunct{:}=\lVert \nabla_A\phi\rVert_{L^2}^2+\lVert F_A\rVert_{L^2}^2+\lVert \mu(\phi)\rVert_{L^2}^2,
    \]
    where $F_A$ is the curvature of $A$, $\nabla_A$ is the covariant differential corresponding to $A$. The exterior extension of $\nabla_A$ is denoted by $D_A$, i.e., the exterior covariant differential. Critical points of the above YMH functional $\mathcal{L}$ are called YMH fields, which satisfy the following Euler--Lagrange equation on $\Sigma$:
    \begin{equation}\label{eq:EL}
      \begin{cases}
        \nabla_A^*\nabla_A\phi+\mu(\phi)\cdot\nabla\mu(\phi)=0,\\
        D_A^*F_A+\left\langle \nabla_A\phi,\phi \right\rangle=0.
      \end{cases}
    \end{equation}

    As the Lie groups and the manifolds differ, the above YMH framework covers many variants. For example, if $F$ is a point, then the YMH theory reduces to the usual Yang--Mills theory. If the Lie group $G$ is trivial, then the YMH fields are just harmonic maps (with potential). When $\mathcal{F}$ is a complex line bundle and $G=S^1$ is abelian, then we recover the classical Ginzburg--Landau equations in the theory of superconductivity \citelist{\cite{BethuelBrezisHelein1994Ginzburg}\cite{JaffeTaubes1980Vortices}}. In general, $G$ can be any compact and possibly non-abelian Lie group and the YMH model can also be viewed as a version of gauged sigma model  with certain supersymmetries determined by the type of $F$ ~\citelist{\cite{Alvarez-GaumeFreedman1981Geometrical}\cite{BaggerWitten1982gauge}\cite{Witten1993Phases}}. A particular important case is when both $\Sigma$ and $F$ are K\"ahler manifolds and there is a holomorphic structure on $\mathcal{F}$, then the minimal points of the YMH functional satisfies a first-order equation and are usually referred to as vortices. The existence of vortices has deep relations with the notion of stability conditions, which is now known as the Hitchin--Kobayashi correspondence~(see for example \citelist{\cite{Donaldson1985anti}\cite{UhlenbeckYau1986existence}\cite{Bradlow1991Special}\cite{Banfield2000Stable}\cite{Riera2000HitchinKobayashi}}). Moreover, the moduli space of vortices can be used to construct symplectic invariants of $F$ with respect to the group action, which is a generalization of the celebrated Gromov--Witten invariants~\citelist{\cite{CieliebakGaioSalamon2000holomorphic}\cite{Riera2003Hamiltonian}}.
    On the other hand, non-minimal YMH fields do exists, see for example~\citelist{\cite{Taubes1982existenceI}\cite{Taubes1982existenceII}}.

    From now on, we assume $\Sigma$ is a compact Riemann surface with non-empty boundary $\partial \Sigma$, $F$ is a compact Riemannian manifold and $G$ is a connected compact Lie group. We will investigate the existence of general YMH fields satisfying the second-order Euler--Lagrange equations~\eqref{eq:EL} under appropriate boundary conditions, namely, a free boundary condition imposed on the section and a Neumann boundary condition on the connection.

    The existence of general YMH fields on a \emph{closed} Riemann surface has been studied by Song~\cite{Song2011CriticalYangMillsHiggs}. The corresponding gradient flow of the YMH functional is investigated by Yu~\cite{Yu2014gradient} under the name of gauged harmonic maps following the work of Lin--Yang~\cite{LinYang2003Gauged} and by Song--Wang~\cite{SongWang2017Heat}. Song~\cite{Song2016Convergence} also studied the convergence of YMH fields where the conformal structure of the underlying surface $\Sigma$ is allowed to vary and degenerate. When $\Sigma$ has possibly non-empty boundary, the minimal YMH fields in the holomorphic setting are studied by Xu~\cite{Xu2013moduli} and Venugopalan~\cite{Venugopalan2016YangMills}. See also the very recent paper by Lin--Shen~\cite{LinShen2018Gradient} on the heat flow of the YMH functional over a compact K\"ahler manifold.

    A closely related problem is harmonic maps from surfaces, which has been extensively studied. For example, the convergence of harmonic maps from degenerating surfaces was firstly systematically explored in \cite{Zhu2010Harmonic} and the existence of harmonic maps with free boundary was studied via various approaches~\citelist{\cite{Fraser2000free}\cite{GulliverJost1987Harmonic}\cite{Ma1991Harmonic}}. On one hand, since the Dirichlet energy $\lVert \nabla_A\phi\rVert_{L^2}^2$ is critical in dimension two, we shall follow the general scheme developed for two-dimensional harmonic map type problems to deal with the section part $\phi$. On the other hand, although the Yang--Mills energy $\lVert F_A\rVert_{L^2}^2$ is subcritical in dimension two,  however, as we will see in this paper, the coupled system brings new technical difficulties caused by the connection part $A$. One of the main achievements in the present paper is to overcome them (see the remark after \autoref{thm:Morrey} for more details).

    Now we shall describe our boundary value problem for YMH fields $(A,\phi)$ in more precise terms. Let $K \subset F$ be a closed sub-manifold which is invariant under the $G$-action. Let $\mathcal{K}=\mathcal{P}\times_GK$ be the sub-bundle of $\mathcal{F}$ with fiber $K$, define the space of smooth sections of $\mathcal{F}$ with free boundary as
    \[
      \mathscr{S}_K \mathpunct{:}=\left\{ \phi\in \mathscr{S}:\phi|_{\partial\Sigma}\in \mathcal{K} \right\}.
    \]
    Clearly, the tangent space of $\mathscr{S}_K$ at $\phi$ is given by
    \[
      T_\phi \mathscr{S}_K=\left\{ \psi\in\Gamma(\phi^{*}T\mathcal{F}^v):\psi(x)\in T_{\phi(x)}\mathcal{K}^v,\,x\in\partial\Sigma \right\},
    \]
    where $T \mathcal{F}^v$ denotes the vertical distribution of tangent bundle $T \mathcal{F}$. On the other hand, the affine space of connections of principal bundle over $\Sigma$ with $\partial\Sigma\neq\emptyset$ is still denoted by $\mathscr{A}$. The tangent space of $\mathscr{A}$ at $A$ is $T_A \mathscr{A}=\Omega^1(\mathfrak{g}_{\mathcal{P}})$, where $\mathfrak{g}_{\mathcal{P}}\mathpunct{:}=\mathcal{P}\times_{\mathrm{Ad}}\mathfrak{g}$ is the Lie algebra vector bundle.
    A simple computation yields the first variation of $\mathcal{L}$ on $\mathscr{A}\times \mathscr{S}_K$,
    \begin{multline*}
      \delta_{\xi,\psi}(\mathcal{L}(A,\phi))=2\int_{\Sigma}\left\langle \nabla_A^*\nabla_A\phi,\psi \right\rangle+\left\langle \mu(\phi)\cdot\nabla\mu(\phi),\psi \right\rangle+\left\langle D_A^*F_A,\xi \right\rangle+\left\langle \nabla_A\phi,\xi\phi \right\rangle\\
      +2\int_{\partial\Sigma}\left\langle \psi,\nu\lh \nabla_A\phi \right\rangle+\left\langle \xi,\nu\lh F_A \right\rangle,
    \end{multline*}
    where $\xi\in T_A\mathscr{A}$ and $\psi\in T_\phi \mathscr{S}_K$, $\nu$ is the unit outer normal vector filed on $\partial\Sigma$ and $\nu\lh$ denotes the contraction of a form with $\nu$. Therefore, a critical point $(A, \phi) \in \mathscr{A}\times \mathscr{S}_K$ of $\mathcal{L}$ satisfies the Euler--Lagrangian equation~\eqref{eq:EL} in the interior of $\Sigma$ and satisfies the following boundary condition on $\partial\Sigma$,
    \[
      (\hypertarget{bdry:N}{\mathcal{N}})\mathpunct{:}\quad
      \begin{cases}
        \nu\lh  \nabla_A\phi\perp T_\phi\mathcal{K}^v,\\
        \nu\lh F_A=0.
      \end{cases}
    \]

    \begin{defn}
      A smooth pair $(A,\phi)\in\mathscr{A}\times\mathscr{S}_K$ is called a \emph{YMH field} with free boundary on the section and Neumann boundary on the connection if it satisfies the system~\eqref{eq:EL} in the interior of $\Sigma$ and satisfies the boundary condition (\hyperlink{bdry:N}{$\mathcal{N}$}) on the boundary $\partial\Sigma$.
    \end{defn}

    Our main goal in this paper is to show the existence of such YMH fields on $\Sigma$. Note that in dimension two, the above condition for the connection $A$ simply means $*F_A=0$ on $\partial\Sigma$, which is exactly the Neumann boundary condition in the study of Yang--Mills theory (see e.g.~\cite{Marini1992Dirichlet}). On the other hand, if we take $\mathcal{K}=\mathcal{F}$ to be the total bundle, then the boundary condition for the section $\phi$ reduces to the Neumann boundary condition $\langle \nu, \nabla_A\phi\rangle =0$ on $\partial \Sigma$. In the case of Ginzburg--Landau theory with $\mathcal{K}=\mathcal{F}$, this boundary condition (\hyperlink{bdry:N}{$\mathcal{N}$}) coincides with the natural \emph{homogeneous} de Gennes--Neumann boundary condition in the study of superconductivity (see e.g. \citelist{\cite{DeGennes1966superconductivity}\cite{LuPan1996GinzburgLandau}} for non-homogeneous condition of sections, and \citelist{\cite{ChapmanHowisonOckendon1992Macroscopic}\cite{DuGunzburgerPeterson1992nalysis}\cite{Nagy2017Irreducible}} for the homogeneous one).

    To investigate the existence of YMH fields subject to the boundary condition (\hyperlink{bdry:N}{$\mathcal{N}$}), in contrast to the Ginzburg--Landau case (see \cite{Nagy2017Irreducible}*{Lem.~3.1}), $\mathcal{L}$ does not satisfy the Palais--Smale condition anymore and we follow the scheme of~\citelist{\cite{SacksUhlenbeck1981existence}\cite{Song2011CriticalYangMillsHiggs}} by considering the following perturbed $\alpha$-functional for  $\alpha>1$:
    \[
      \mathcal{L}_\alpha(A,\phi)\mathpunct{:}=\int_\Sigma(1+|\nabla_A\phi|^2)^\alpha+\lVert F_A\rVert_{L^2}^2+\lVert \mu(\phi)\rVert_{L^2}^2,\quad (A,\phi)\in \mathscr{A}_1^2\times \mathscr{S}_{1,K}^{2\alpha},
    \]
    where $\mathscr{A}_1^2$ and $\mathscr{S}_{1,K}^{2\alpha}$ denote the corresponding Sobolev spaces which are defined as follows: for a fixed smooth connection $A_0\in \mathscr{A}$, the \emph{affine} Sobolev space of $L_1^p$ connections is defined as
    \[
      \mathscr{A}_1^p \mathpunct{:}=\left\{ A\in A_0+L_1^p\left( \Omega^1\bigl(\mathfrak{g}_{\mathcal{P}}\bigr) \right) \right\}.
    \]
    The spaces $\mathscr{A}_1^p$ defined via different choices of $A_0$ are isomorphic to each other. The Sobolev space of sections $\mathscr{S}_{1,K}^{2\alpha}$ is defined by
    \[
      \mathscr{S}_{1,K}^{2\alpha}\mathpunct{:}=\left\{ \phi\in L_1^{2\alpha}(\mathcal{E}):\phi(x)\in \mathcal{F}\xtext{for a.e. $x\in \Sigma$}\xtext{and}\phi(x)\in \mathcal{K}\xtext{for a.e. $x\in\partial\Sigma$} \right\},
    \]
    where we embed $\mathcal{F}$ into a vector bundle $\mathcal{E}=\mathcal{P}\times_G \mathbb{R}^l$ for some large enough $l$ such that $F \hookrightarrow \mathbb{R}^l$ is an \emph{equivariant} (with respect to the orthogonal representation $\rho \mathpunct{:}G\to \mathrm{O}(l)$) isometrical embedding (see \cite{MooreSchlafly1980equivariant}*{Main Thm.}), and we view sections of $\mathcal{F}$ as sections of $\mathcal{E}$, where the covariant differential induced by $A$ is also defined. We refer to~\cite{Wehrheim2004Uhlenbeck}*{Appx.~B} for the definition of Sobolev norms on vector bundles and fiber bundles (e.g., the gauge group of $\mathcal{P}$, $\mathscr{G}_2^p \mathpunct{:}= L_2^p(\mathcal{P}\times_c G)$, where $c$ is the conjugation).

    It turns out that, the perturbed functional $\mathcal{L}_\alpha$ with $\alpha>1$ satisfies the Palais--Smale condition on $\mathscr{A}_1^2\times \mathscr{S}_{1,K}^{2\alpha}$  (see~\autoref{sec:PS}), hence it admits critical points, which we call \emph{$\alpha$-YMH fields} with free boundary on the section and Neumann boundary on the connection, by classical theory of calculus of variation. The Euler--Lagrange equation for a critical point $(A, \phi)$ of $\mathcal{L}_\alpha$ is given by
    \begin{equation}\label{eq:EL-L-alpha-global}
      \begin{cases}
        \nabla_A^*\left( \alpha(1+|\nabla_A\phi|^2)^{\alpha-1}\nabla_A\phi \right)-\mu(\phi)\cdot\nabla\mu(\phi)=0,&x\in\Sigma\\
        D_A^*F_A+\alpha(1+|\nabla_A\phi|^2)^{\alpha-1}\left\langle \nabla_A\phi,\phi \right\rangle=0,&x\in\Sigma\\
        \nu\lh F_A=0,&x\in\partial\Sigma\\
        \nu\lh \nabla_A\phi\perp T_\phi\mathcal{K}^v,&x\in\partial\Sigma.
      \end{cases}
    \end{equation}

    Our first result is the following interior regularity theorem and boundary regularity theorem for $\alpha$-YMH fields under our boundary condition (\hyperlink{bdry:N}{$\mathcal{N}$}).
    \begin{mthm}\label{thm:alpha-smoothness}
      Suppose $\alpha>1$, $K\subset F$ is a $G$-invariant sub-manifold and $(A_\alpha,\phi_\alpha)$ is a critical point of $\mathcal{L}_\alpha$ in $\mathscr{A}_1^2\times \mathscr{S}_{1,K}^{2\alpha}$. Then for any compact subset $\Sigma'$ in the interior of $\Sigma$, there exists a gauge transformation $\tilde{S}\in \mathscr{G}_2^2(\Sigma')$, such that $(\tilde{S}^*A_\alpha, \tilde{S}^*\phi_\alpha)$ is smooth on $\Sigma'$. If, in addition, $K\subset F$ is a totally geodesic sub-manifold, then there exists a gauge transformation $\tilde{S}\in \mathscr{G}_2^2(\Sigma)$, such that $(\tilde{S}^*A_\alpha,\tilde{S}^*\phi_\alpha)$ is smooth up to the boundary.
    \end{mthm}
    For $\alpha$-harmonic maps, which can be regarded as a special kind of $\alpha$-YMH fields, such regularity result was proved by Sacks-Uhlenbeck~\cite{SacksUhlenbeck1981existence}*{Prop.~2.3} in the case of a closed domain and the free boundary case was considered by Fraser~\cite{Fraser2000free}*{Prop.~1.4}. The proof for the case of $\alpha$-harmonic maps simply follows from a classic regularity theorem by Morrey~\cite{Morrey2008Multiple}*{Thm.~1.11.1$'$, p.~36}, extending the one by Ladyzhenskaya-Ural\cprime ceva ~\cite{LadyzhenskayaUraltseva1968Linear}*{Chap.~8, Thm.~2.1, p.~412}. However, Morrey's theorem in~\cite{Morrey2008Multiple}*{Thm.~1.11.1$'$} can not be applied to the coupled system of $\alpha$-YMH field $(A, \phi)\in \mathscr{A}_1^2\times \mathscr{S}_{1,K}^{2\alpha}$, because the corresponding ellipticity condition in (1.10.8$''$) of~\cite{Morrey2008Multiple}*{Thm.~1.11.1$'$} cannot be verified, due to the feature of the non-trivial coupling between the two fields. Therefore, we need to develop a new regularity theorem to handle coupled systems of more general type, in particular, to include the system of $\alpha$-YMH fields\footnote{This technical issue was overlooked by Song in \cite{Song2011CriticalYangMillsHiggs} and here we take our opportunity to fix the gap by extending Morrey's theorem to Theorem B.}.

    In this paper, we succeed in deriving such a more general regularity result, which itself is interesting and might lead to applications to various other coupled systems emerging from geometry and physics.

    Suppose $\Omega\subset \mathbb{R}^n$ is a bounded domain, we will consider a 2-coupled system,
    \begin{equation}\label{eq:2coupled-system}
      \begin{dcases}
        -\sum_{\alpha=1}^n\partial_\alpha q_{1i}^\alpha(x,z,\nabla z)+w_{1i}(x,z,\nabla z)=0,& i=1,2,\ldots, m_1,\\
        -\sum_{\alpha=1}^n\partial_\alpha q_{2i}^\alpha(x,z,\nabla z)+w_{2i}(x,z,\nabla z)=0,& i=1,2,\ldots, m_2,
      \end{dcases}
    \end{equation}
    where $x=(x^1,\ldots,x^n)\in\Omega$, $\partial_\alpha=\partial_{x^\alpha}$; $z=z(x)=(z_1(x),z_2(x))\in \mathbb{R}^{m_1}\times \mathbb{R}^{m_2}$, and $\nabla z=(\nabla z_1,\nabla z_2)\in \mathbb{R}^{m_1n}\times \mathbb{R}^{m_2n}$ is the gradient of $z$; $q=(q_1,q_2)$, $q_a=(q_{ai}^\alpha)_{m_a\times n}$; $w=(w_1,w_2)$, $w_a=(w_{ai})_{m_a\times 1}$. A vector valued function $z(x)=(z_1(x), z_2(x))\in L_1^{k_1}(\Omega, \mathbb{R}^{m_1})\times L_1^{k_2}(\Omega, \mathbb{R}^{m_2})$, $k_1\geq 2$, $k_2\geq 2$, is called a \emph{weak} solution of \eqref{eq:2coupled-system} if
    \begin{equation}\label{eq:coupled-weak}
      \begin{dcases}
        \int_\Omega  \sum_{i=1}^{m_1}\left( \sum_{\alpha=1}^n\partial_\alpha \xi_1^i(x)q_{1i}^\alpha(x,z,\nabla z) +\xi_1^i(x)w_{1i}(x,z,\nabla z) \right) =0,&\forall\xi_1\in L_{1,0}^{k_1}\bigcap C^0(\Omega, \mathbb{R}^{m_1}),\\
        \int_\Omega  \sum_{i=1}^{m_2}\left( \sum_{\alpha=1}^n\partial_\alpha \xi_2^i(x)q_{2i}^\alpha(x,z,\nabla z) +\xi_2^i(x)w_{2i}(x,z,\nabla z) \right) =0,&\forall\xi_2\in L_{1,0}^{k_2}\bigcap C^0(\Omega, \mathbb{R}^{m_2}).
      \end{dcases}
    \end{equation}
    We will assume that the coefficients $q=q(x,z,p)\in L_{1,\loc}^1\bigcap C^0(\Omega\times \mathbb{R}^{m}\times \mathbb{R}^{mn})$ and $w=w(x,z,p)\in L_{1,\loc}^1\bigcap C^0(\Omega\times \mathbb{R}^{m}\times \mathbb{R}^{mn})$, $m=m_1+m_2$, satisfy the following \emph{natural structure conditions}: for almost all $x\in\Omega$, we have
    \begin{equation}\label{eq:coupled-condi}
      \begin{cases}
        (\lvert w_1 \rvert+\lvert w_{1x} \rvert, \lvert w_2 \rvert+\lvert w_{2x} \rvert)\leq\Lambda(R)\left( V_1^{k_1}+V_2^{k_2-1},V_2^{k_2} \right);\\
        \left( \lvert q_1 \rvert + \lvert q_{1x} \rvert, \lvert q_2 \rvert + \lvert q_{2x} \rvert  \right)\leq\Lambda(R) \left( V_1^{k_1-1},V_2^{k_2-1} \right); \\
        \pi\cdot w_z\cdot \pi^T\leq\Lambda(R)\left( \sum\limits_{a=1}^2 V_a^{k_a}\lvert \pi_a \rvert^2+V_2^{k_2-1}\lvert \pi_1 \rvert^2 \right);\\
        \lvert q_z \rvert +\lvert w_p^T \rvert\leq \Lambda(R)
        \begin{pmatrix}
          V_1^{k_1-1}&0\\
          V_2^{k_2-2}&V_2^{k_2-1}
        \end{pmatrix};\\
        \lvert q_p \rvert\leq\Lambda(R)\mathrm{diag}\left( V_1^{k_1-2},V_2^{k_2-2} \right);\\
        \bar \pi\cdot q_p\cdot \bar \pi^T\geq\lambda(R)\sum\limits_{a=1}^2V_a^{k_a-2}\lvert \bar \pi_a \rvert^2;
      \end{cases}
    \end{equation}
    where the mixed derivatives with respect to $x$, $z$ and $p$ are simply denoted by subscripts; $\Lambda \mathpunct{:}=\Lambda(R)>\lambda \mathpunct{:}=\lambda(R)>0$ are constants depending on $R$, and $R>0$ is the upper bound of $(x,z)$, i.e., $\lvert x \rvert^2+\lvert z \rvert^2\leq R^2$; $V_a \mathpunct{:}=(1+\lvert p_a \rvert^2)^{1/2}$, $a=1,2$; $\pi=(\pi_1,\pi_2)$, $\pi_a=(\pi_a^i)_{m_a\times 1}$; $\bar \pi=(\bar \pi_1,\bar \pi_2)$, $\bar \pi_a=(\bar \pi_{a\alpha}^i)_{m_a\times n}$ are any constant matrices; Here we basically follow the notations of \cite{Morrey2008Multiple}*{(1.10.8$''$)}. In addition, $\lvert \cdot \rvert$ is the maximum norm, and for two $2\times 2$ block non-symmetric real matrices $M_1$, $M_2$, $\lvert M_1 \rvert\leq M_2$ means $|M_{1;ab}|\leq  M_{2;ab} $, for all $a,b=1,2$; similar notations are adopted for $1\times 2$ block matrices. In particular, the first condition for $q_p$ implies that $q_p=\begin{pmatrix}\partial_{p_1}q_1&\partial_{p_2}q_1\\\partial_{p_1}q_2&\partial_{p_2}q_2\end{pmatrix}$ is a block diagonal matrix, the second condition for $q_p$ is the ellipticity.
    \begin{mthm}\label{thm:Morrey}
      Suppose that $\Omega$ is a bounded domain in $\mathbb{R}^n$, $z=(z_1,z_2)$, $z_a\in L_1^{k_a}\cap C^\mu(\Omega, \mathbb{R}^{m})$, for some $0<\mu<1$ and $k_a\geq 2$, $a=1,2$, is a weak solution of the 2-coupled system \eqref{eq:2coupled-system}, with the coefficients $q=q(x,z,p)\in L^{1}_{1,\loc}\bigcap C^0(\Omega\times \mathbb{R}^{m}\times \mathbb{R}^{mn})$ and $w=w(x,z,p)\in L_{1,\loc}^1\bigcap C^0(\Omega\times \mathbb{R}^{m}\times \mathbb{R}^{mn})$, $m=m_1+m_2$, and satisfying the natural structure conditions \eqref{eq:coupled-condi}. If $z_1\in L_1^{2k_2}(\Omega, \mathbb{R}^{m_1})$, then $z\in L_{2}^2(\Omega, \mathbb{R}^{m})$.
    \end{mthm}
    \begin{rmk}
      On one hand, taking $z_1=0$ or taking $z_2=0$ and $k_2=k_1/ 2$ in~\autoref{thm:Morrey} gives Morrey's theorem~\cite{Morrey2008Multiple}*{Thm.~1.11.1$'$}. On the other hand, applying~\cite{Morrey2008Multiple}*{Thm.~1.11.1$'$} to a coupled system for $z=(z_1, z_2)$ does not simply imply the results in ~\autoref{thm:Morrey}, because the conditions~\eqref{eq:coupled-condi} given in~\autoref{thm:Morrey} are different from those in~\cite{Morrey2008Multiple}*{Thm.~1.11.1$'$} for a coupled system of $z=(z_1, z_2)$. In fact, there are two main differences. The first one is that the coupling relation in conditions \eqref{eq:coupled-condi} is expressed in terms of $w_1$, $w_{1x}$, $w_z$, $q_z$ and $w_p$, which will produce cross terms as expressed by the terms of the last parentheses in~\eqref{eq:l22-estimate}, see~\autoref{sec:couple-regularity}. To control these extra terms, we need to make additional regularity assumption for $z_1$, which is natural for coupled systems. The second one is that, the conditions \eqref{eq:coupled-condi} are only required to be held almost everywhere in $\Omega$ and the assumption on the regularity of the coefficients $q$ and $w$ is also weakened. We can check the well-definiteness of the weak solution in \eqref{eq:coupled-weak} under these regularity assumptions. The latter is useful when dealing with some coupled systems with non-smooth coefficients.
    \end{rmk}
    \begin{rmk}
      The coupling relation expressed by $w_1$, $w_{1x}$, $w_z$, $q_z$ and $w_p$ in conditions \eqref{eq:coupled-condi} is sharp and delicate in some sense. From the coupled condition of $w_1$, $w_{1x}$, $w_z$, it seems that one can add some lower order perturbed terms such as $V_2^{k_2-1}$, however, the coupled condition of $q_z$ and $w_p$ shows that this principle is not true anymore. This is because if we change the upper corner $0$ to $V_2^{k_2-2}$ or any other nonzero lower order term of $V_2$, it will then produce some new coupled terms, which cannot be analytically controlled anymore. For the same reason, the transpose of $w_p$ is also crucial here.
    \end{rmk}
    To get the regularity up to the boundary for $\alpha$-YMH fields satisfying the boundary condition (\hyperlink{bdry:N}{$\mathcal{N}$}), we shall locally reflect both the section $\phi$ and the connection $A$ across the free boundary naturally and derive a new coupled system for the reflected fields, then we apply the regularity results in~\autoref{thm:Morrey} to this new coupled system to get the interior regularity of the reflected fields, which gives the regularity up to the boundary of the original one.
    \bigbreak
    Next we study the existence of YMH fields under our boundary condition (\hyperlink{bdry:N}{$\mathcal{N}$}) by exploring the limiting behavior of a sequence of $\alpha$-YMH fields as $\alpha\to1$. Since the Dirichlet energy $\Vert\nabla_A\phi \Vert_{L^2}^2$ is conformally invariant in dimension two, energy concentration and bubbling phenomena can possibly occur, which is similar to various harmonic map type problems.  Actually, in \citelist{\cite{Song2011CriticalYangMillsHiggs}\cite{Song2016Convergence}}, it was shown that when the surface $\Sigma$ is closed, a sub-sequence of the $\alpha$-YMH fields converges to a YMH fields away from at most finitely many blow-up points where the energies concentrate. At each blow-up point, a harmonic sphere can split off. In the situation considered in this paper,  where $\Sigma$ has non-empty boundary, it is sufficient to focus on the blow-up behavior near the boundary $\partial\Sigma$.  For $\alpha$-harmonic maps with free boundary, we refer to~\cite{Fraser2000free}.

    Our main result, in analogy to the closed case (see \cite{Song2011CriticalYangMillsHiggs}), is the following boundary bubbling convergence theorem for a sequence of $\alpha$-YMH fields under our boundary conditions.
    \begin{mthm}\label{thm:blowup}
      There exists a constant $\alpha_0>1$, such that if $\left\{ (A_{\alpha},\phi_{\alpha}) \right\} \subset \mathscr{A} \times \mathscr{S}_K$ is a sequence of  smooth $\alpha$-YMH fields with $\alpha\in(1,\alpha_0)$ and $\mathcal{L}_\alpha(A_{\alpha},\phi_{\alpha})\leq \Lambda<+\infty$, then the blow-up set $\mathcal{S}$ of $\left\{ (A_{\alpha},\phi_{\alpha}) \right\}$ defined by
      \[
        \mathcal{S}\mathpunct{:}=\left\{ x\in\Sigma:\lim_{r\to0}\liminf_{\alpha\to1}\int_{U_r(x)}\lvert \nabla_{A_{\alpha}}\phi_{\alpha} \rvert^2\geq\eps_0 \right\},
      \]
      is a set of at most finitely points; where $\epsilon_0>0$ is a constant depending on the geometry of the bundle (see~\autoref{lem:eps-regulairty}) and $U_r(x)$ is a geodesic ball of radius $r$ centered at $x$ in $\Sigma$. Moreover, as $\alpha\to1$, after taking a sub-sequence of $\left\{ (A_\alpha,\phi_\alpha) \right\}$, we have
      \begin{enumerate}
        \item\label{item:prop:blowup} $A_{\alpha}\to A_\infty$  in $C^{\infty}_{\mathrm{loc}}(\Sigma\setminus \mathcal{S})\cap C^0(\Sigma)$ and $\phi_{\alpha} \to \phi_\infty$  in $C_{\mathrm{loc}}^\infty(\Sigma\setminus \mathcal{S})$ module gauge. Moreover, $(A_\infty,\phi_\infty)$ extends to a smooth YMH fields on $\Sigma$ satisfying the boundary condition (\hyperlink{bdry:N}{$\mathcal{N}$}).
      \item For each $x\in \mathcal{S}\cap \partial \Sigma$, there exist either a non-trivial harmonic spheres $\omega \mathpunct{:} S^2\to F$ or a non-trivial harmonic discs $w \mathpunct{:} B\to F$ with free boundary on $K$.
      \end{enumerate}
    \end{mthm}
    \begin{rmk}
      In \autoref{thm:blowup}, if $F$ admits no non-trivial harmonic 2-spheres, then either $(A_\alpha,\phi_\alpha)$ subconverges smoothly to a YMH field $(A_\infty,\phi_\infty)$ over $\Sigma$, where $\phi_\infty$ and $\phi_\alpha$ are in the same homotopy class, or there exists at least one minimal 2-disc in $F$ with free boundary on $K$.
    \end{rmk}

    The rest of the paper is organized as follows. In~\autoref{sec:PS-alpha}, we study the perturbed YMH functional and $\alpha$-YMH fields. We start with the verification of Palais--Smale condition in~\autoref{sec:PS}, then prove the regularity~\autoref{thm:Morrey} in~\autoref{sec:couple-regularity}, from which~\autoref{thm:alpha-smoothness} follows in~\autoref{sec:smoothness}. In~\autoref{sec:eps-regularity}, we derive local estimates for both the connection and the section. The blow-up argument is demonstrated in~\autoref{sec:blowup}, which is the content of~\autoref{thm:blowup}. Finally, we collect some classical boundary estimates and regularity theorems of free boundary problems in~\autoref{sec:bdry-regularity}.

    \section{The \texorpdfstring{$\alpha$}{alpha}-YMH functional}\label{sec:PS-alpha}
    We first show in~\autoref{sec:PS} that $\mathcal{L}_\alpha$, $\alpha>1$, satisfies the Palais--Smale condition so that there exist critical points of $\mathcal{L}_\alpha$ which solve the Euler--Lagrange equation of $\mathcal{L}_\alpha$ weakly. To improve the regularity of the weak solution, we generalize a classical regularity result of elliptic systems to coupled elliptic systems in~\autoref{sec:couple-regularity} and then rewrite the weak solution into strong form, from which the smoothness of the solution when $\alpha>1$ follows from classical elliptic estimate (up to the boundary) and bootstrap as sketched in~\autoref{sec:smoothness}.
    \subsection{The Palais--Smale condition}\label{sec:PS}
    It is well-known that the Palais--Smale condition is crucial in deriving the existence of certain kinds of critical points in variational problems. For $\alpha$-harmonic maps, we refer to~\cite{Urakawa1993Calculus}*{Sect.~3.2} for the case of closed surfaces and~\cite{Fraser2000free}*{Prop.~1.1} for the free boundary case. The same idea is applied to $\alpha$-YMH functional in~\cite{Song2011CriticalYangMillsHiggs}*{Lem.~3.2} for the case of a closed surface $\Sigma$. In what follows, we verify the Palais--Smale condition for $\mathcal{L}_\alpha$ when $\partial\Sigma\neq\emptyset$ and the boundary condition (\hyperlink{bdry:N}{$\mathcal{N}$}) is imposed.

    Recall the following weak compactness theorem of connections on manifolds with boundary.
    \begin{thm}[\cite{Wehrheim2004Uhlenbeck}*{Thm.~7.1, p.~108}]\label{thm:weak-compactness}
      Suppose $M$ is a Riemannian manifold with boundary. Let $2p>\dim M\geq2$ and $\left\{ A_n \right\}\subset\mathscr{A}_1^p$ be a sequence of connections with $\lVert F_{A_n}\rVert_{L^p(M)}^p\leq \Lambda<+\infty$. Then, there exists a sub-sequence, still denoted by $\left\{ A_n \right\}$, and a sequence of gauge transformations $S_n\in \mathscr{G}_2^p$ such that $\left\{ S^*_nA_n \right\}$ converges weakly in $\mathscr{A}_1^p$. That is, a sub-sequence of $\left\{ A_n \right\}$ converges weakly in $\mathscr{A}_1^p$ module gauge.
    \end{thm}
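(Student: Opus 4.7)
The plan is to follow Uhlenbeck's classical strategy: apply a local Coulomb gauge fixing theorem on a sufficiently fine cover of $\Sigma$ to trade the uniform $L^p$ bound on $F_{A_n}$ for a uniform $L_1^p$ bound on the \emph{gauged} connections, and then patch the local gauge transformations into a single global one while invoking weak compactness of bounded sets in the reflexive Banach space $L_1^p$.

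First I would choose a finite cover $\{U_k\}_{k=1}^N$ of $\Sigma$ by geodesic balls (and half-balls where they meet $\pt\Sigma$), small enough that $\mathcal{P}$ trivializes on each $U_k$ and, uniformly in $n$, $\lVert F_{A_n}\rVert_{L^p(U_k)}<\eps_0$, where $\eps_0$ is the threshold of Uhlenbeck's local gauge theorem. On an interior chart this yields $s_k^n\in\mathscr{G}_2^p(U_k)$ such that $\tilde A_k^n:=(s_k^n)^*A_n$ satisfies $d^*\tilde A_k^n=0$ with the elliptic estimate $\lVert\tilde A_k^n\rVert_{L_1^p(U_k)}\leq C\lVert F_{A_n}\rVert_{L^p(U_k)}$. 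On a boundary chart I would invoke the relative Coulomb theorem with the Neumann side condition $\nu\lh\tilde A_k^n=0$ on $\pt\Sigma\cap U_k$, which is compatible with the given $\nu\lh F_{A_n}=0$; this makes the boundary elliptic system $d^*A=0$, $dA=F_A-\tfrac12[A,A]$ well-posed on a half-ball and yields the analogous $L_1^p$ estimate.

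Given the uniform $L_1^p$ bounds, the Sobolev embedding $L_1^p(U_k)\embedto C^0(U_k)$ (valid since $2p>\dim\Sigma$) allows one, after diagonalizing over a subsequence, to extract weak $L_1^p$ and strong $C^0$ limits $\tilde A_k$ on each $U_k$. The transition functions $\sigma_{kl}^n:=s_k^n(s_l^n)^{-1}$ satisfy the gauge transformation law between $\tilde A_k^n$ and $\tilde A_l^n$ on $U_k\cap U_l$; differentiating this identity shows that $\sigma_{kl}^n$ is uniformly bounded in $L_2^p$, so a further subsequence converges weakly. Inductively, I would then construct a global gauge transformation $S_n\in\mathscr{G}_2^p$ by setting $S_n=s_1^n$ on $U_1$ and, on each successive chart $U_k$, writing $S_n=\tau_k^n\cdot s_k^n$ with $\tau_k^n$ a bounded $L_2^p$ correction arranged to agree with the previously chosen $S_n$ on $U_k\cap\bigcup_{j<k}U_j$. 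A final subsequence gives the desired weak $\mathscr{A}_1^p$ convergence of $S_n^*A_n$, and the closed linear constraint $\nu\lh F=0$ is preserved under weak limits.

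The main obstacle will be the patching step, because local Coulomb gauges are unique only up to a finite-dimensional harmonic ambiguity, so the transition functions need not converge in any topology stronger than weak $L_2^p$ without careful bookkeeping. The standard remedy is to exploit the compact embedding $L_2^p\embedto C^0$ on overlaps (again because $2p>n$) and to inductively adjust the local gauges so that the limiting transition functions form a genuine cocycle on triple intersections; this upgrades weak subsequential convergence to the equalities needed to glue. The boundary complicates notation but not the analytic core, since transitions can be kept tangential along $\pt\Sigma$ so as to preserve the Neumann gauge condition $\nu\lh A=0$ imposed on boundary charts.
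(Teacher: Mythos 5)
The paper does not prove this statement; it cites it directly from Wehrheim's book (Thm.~7.1), whose proof is precisely the Uhlenbeck patching argument you sketch: local Coulomb gauge fixing on a fine cover (with the Neumann condition $\nu\lh A=0$ on boundary half-balls), uniform $L_1^p$ bounds from the curvature estimate, $L_2^p$ bounds on transition functions via $d\sigma_{kl}^n=\sigma_{kl}^n\tilde A_l^n-\tilde A_k^n\sigma_{kl}^n$ and Sobolev multiplication, and then an inductive patching of local gauges exploiting the compact embeddings $L_1^p\embedto C^0$ and $L_2^p\embedto C^1$ available when $2p>\dim\Sigma$. Your proposal follows essentially the same route as the cited reference, and you correctly identify the patching and cocycle-bookkeeping step as the delicate point.
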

    With the help of above theorem, we will show that $\mathcal{L}_\alpha$, $\alpha>1$, satisfies the Palais--Smale condition.
    \begin{lem}\label{lem:ps}
      For any $\alpha>1$, $\mathcal{L}_\alpha$ satisfies the Palais--Smale condition on the product space $\mathscr{A}_1^2\times \mathscr{S}_{1,K}^{2\alpha}$. That is, for any sequence $\left\{ (A_n,\phi_n) \right\}\in \mathscr{A}_1^2\times \mathscr{S}_{1,K}^{2\alpha}$, if
      \begin{enumerate}
        \item\label{item:1-lem-ps} $\mathcal{L}_\alpha(A_n,\phi_n)\leq \Lambda<+\infty$;
        \item\label{item:2-lem-ps} $\lVert D \mathcal{L}_\alpha(A_n,\phi_n)\rVert\to0$, where the norm is taken in $T^*_{(A_n,\phi_n)} \mathscr{A}_1^2\times \mathscr{S}_{1,K}^{2\alpha}$;
      \end{enumerate}
      then there exists a sub-sequence which converges strongly in $\mathscr{A}_1^2\times \mathscr{S}_{1,K}^{2\alpha}$ module gauge.
    \end{lem}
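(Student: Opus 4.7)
The strategy is standard for $\alpha$-energies: combine Uhlenbeck's gauge fixing on the connection side with Rellich--Kondrachov compactness and the strict convexity of $(1+|\nabla_A\phi|^2)^\alpha$ for $\alpha > 1$ to pass from weak to strong convergence.

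From hypothesis~(a) I would extract uniform bounds on $\|F_{A_n}\|_{L^2}$, $\|\nabla_{A_n}\phi_n\|_{L^{2\alpha}}$, and $\|\mu(\phi_n)\|_{L^2}$, while compactness of the target fibre $F$ under the equivariant embedding $F \hookrightarrow \mathbb{R}^l$ yields $\|\phi_n\|_{L^\infty} \leq C$. Applying \autoref{thm:weak-compactness} with $p = 2$ to the $L^2$-bounded curvatures produces gauges $S_n \in \mathscr{G}_2^2$ with $S_n^* A_n \rightharpoonup A_\infty$ weakly in $\mathscr{A}_1^2$; since $\mathcal{L}_\alpha$ and $\|D\mathcal{L}_\alpha\|$ are gauge invariant, I replace $(A_n,\phi_n)$ by the gauge-transformed pair and assume $A_n \rightharpoonup A_\infty$ in $L_1^2$. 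Writing $\nabla\phi_n = \nabla_{A_n}\phi_n - A_n\cdot\phi_n$ and invoking the two-dimensional Sobolev embedding $L_1^2 \hookrightarrow L^q$ ($q<\infty$) together with the $L^\infty$ bound on $\phi_n$, we see that $\phi_n$ is bounded in $L_1^{2\alpha}$. Rellich--Kondrachov then delivers, along a subsequence, $A_n \to A_\infty$ and $\phi_n \to \phi_\infty$ strongly in every $L^q$ with $q < \infty$, and $\phi_n \rightharpoonup \phi_\infty$ weakly in $L_1^{2\alpha}$.

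Upgrading to strong convergence in $L_1^2 \times L_1^{2\alpha}$ is where hypothesis~(b) enters. For the section I would test $D\mathcal{L}_\alpha(A_n,\phi_n)$ against $(\xi,\psi) = (0,\phi_n - \phi_\infty)$ to obtain
\[
  \int_\Sigma \alpha V_n^{2\alpha-2} \bigl\langle\nabla_{A_n}\phi_n,\,\nabla_{A_n}(\phi_n - \phi_\infty)\bigr\rangle = o(1), \qquad V_n \eqdef (1 + \lvert\nabla_{A_n}\phi_n\rvert^2)^{1/2},
\]
the Higgs contribution being absorbed through the strong $L^q$ convergence of $\phi_n$. Subtracting the analogous expression formed from $A_\infty,\phi_\infty$, which vanishes in the limit because $\nabla_{A_n}(\phi_n - \phi_\infty) \rightharpoonup 0$ pairs with the fixed object $V_\infty^{2\alpha-2}\nabla_{A_\infty}\phi_\infty$ (together with $\nabla_{A_n}\phi_\infty - \nabla_{A_\infty}\phi_\infty = (A_n - A_\infty)\phi_\infty \to 0$ in $L^q$), the uniform monotonicity
\[
  \bigl\langle V(p)^{2\alpha-2}\,p - V(q)^{2\alpha-2}\,q,\,p - q\bigr\rangle \geq c_\alpha\,(1 + |p|^2 + |q|^2)^{\alpha-1} |p - q|^2, \quad \alpha > 1,
\]
forces $\nabla_{A_n}\phi_n \to \nabla_{A_\infty}\phi_\infty$ in $L^{2\alpha}$, hence $\phi_n \to \phi_\infty$ in $L_1^{2\alpha}$. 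For the connection I would test against $(\xi,\psi) = (A_n - A_\infty, 0)$: expanding $F_{A_n} - F_{A_\infty}$ into $d(A_n - A_\infty)$ plus a quadratic remainder in $A_n - A_\infty$ and disposing of all cross terms and the Yang--Mills coupling to $\phi_n$ via the already-established strong $L^q$ convergences gives $\|F_{A_n} - F_{A_\infty}\|_{L^2} \to 0$. The Coulomb-type elliptic estimate underlying \autoref{thm:weak-compactness} then bounds $\|A_n - A_\infty\|_{L_1^2}$ in terms of this curvature difference plus strongly convergent lower-order quantities, completing the argument.

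The principal obstacle is the monotonicity step for the section: the nonlinear weight $V_n^{2\alpha-2}$ is only weakly convergent a priori, and the coupling contributes cross terms $V_n^{2\alpha-2}\langle \nabla_{A_n}\phi_n, (A_n - A_\infty)\cdot\phi_n\rangle$ that have to be swallowed through the strong $L^q$ convergence of the connections. This is precisely where $\alpha > 1$ is essential: at $\alpha = 1$ the energy becomes conformally invariant, bubbling is not precluded, and Palais--Smale genuinely fails---consistent with the concentration behaviour analysed in \autoref{thm:blowup}.
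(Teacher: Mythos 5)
Your proposal shares the paper's overall architecture --- Uhlenbeck gauge fixing on the connection side, weak compactness, testing $D\mathcal{L}_\alpha$ against differences, and exploiting the strict monotonicity of $p\mapsto(1+|p|^2)^{\alpha-1}p$ for $\alpha>1$ --- and your Step 1 for the connection is essentially identical to the paper's. However, there is a genuine gap on the section side.

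You test $D\mathcal{L}_\alpha(A_n,\phi_n)$ against $(0,\phi_n-\phi_\infty)$, but $\phi_n-\phi_\infty$ is \emph{not} an admissible test direction. The differential $D\mathcal{L}_\alpha(A_n,\phi_n)$ lives on $T_{\phi_n}\mathscr{S}_1^{2\alpha}$, whose elements must take values in $\phi_n^*T\mathcal{F}^v$ pointwise and lie in $T\mathcal{K}^v$ on $\partial\Sigma$. Since $\mathcal{F}$ is curved and both $\phi_n$ and $\phi_\infty$ take values in it, the ambient difference $\phi_n-\phi_\infty$ has a nontrivial normal component and also violates the boundary constraint. Hypothesis~(b) therefore does \emph{not} give you $\int_\Sigma \alpha V_n^{2\alpha-2}\langle\nabla_{A_n}\phi_n,\nabla_{A_n}(\phi_n-\phi_\infty)\rangle = o(1)$ directly; you would first need to project $\phi_n-\phi_\infty$ onto $T_{\phi_n}\mathscr{S}_1^{2\alpha}$ and then show the normal remainder is negligible in the $L_1^{2\alpha}$ topology. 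This is precisely what the paper builds machinery for: it extends $\mathcal{L}_\alpha(A_\infty,\cdot)$ to a functional $\mathscr{E}$ on the linear space $L_1^{2\alpha}(\mathcal{E})$ (so that arbitrary differences become valid test directions for $D\mathscr{E}$), establishes the monotonicity inequality~\eqref{eq:claim-eng-control} for $D\mathscr{E}$ on that linear space, and then uses the fibre-wise projection estimate $\lVert(\id-\Pi_{\phi_n})(\phi_n-\phi_m)\rVert_{L_1^{2\alpha}}\to0$ to transfer hypothesis~(b) from $\mathscr{S}_1^{2\alpha}$ to the ambient space. Without some version of this step your monotonicity argument does not close. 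A secondary difference is that the paper works with a Cauchy sequence argument (testing against $\phi_n-\phi_m$ rather than $\phi_n-\phi_\infty$), which avoids having to identify the weak limit $\phi_\infty$ or establish any criticality for it before concluding strong convergence; your route needs a little more care to justify that the ``subtracted expression'' built from the not-yet-known limit $\phi_\infty$ vanishes.
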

    \begin{proof}
      In what follows, for simplicity, we don't distinguish a sequence and its sub-sequences.
      \step We first show that $\left\{ A_n \right\}$ converges strongly in $\mathscr{A}_1^2$ to some $A_\infty$.

      By assumption~\eqref{item:1-lem-ps} of $\mathcal{L}_\alpha$, $\lVert F_{A_n}\rVert_{L^2(\Sigma)}^2\leq \Lambda$ and we can apply~\autoref{thm:weak-compactness} to show that $\left\{ S_n^*A_n \right\}$ converges weakly to $A_\infty$ in $\mathscr{A}_1^2$ for some sequence $\left\{ S_n \right\}\subset\mathscr{G}_2^2$. For simplicity, we still denote $\left\{ S^*_nA_n \right\}$ by $\left\{ A_n \right\}$, then
      \begin{equation}\label{eq:An-A-infty-bdd}
        A_n\weakto A_\infty\xtext{in} \mathscr{A}_1^2,\quad\lVert A_n-A_0 \rVert_{L_1^2}\leq C,
      \end{equation}
      where $A_0$ is the reference connection. By assumption~\eqref{item:2-lem-ps} of $\mathcal{L}_\alpha$,
      \begin{equation}\label{eq:DL-alpha}
        |\left\langle D\mathcal{L}_\alpha(A_n,\phi_n),(A_n-A_\infty,0) \right\rangle|\leq\lVert D\mathcal{L}_\alpha(A_n,\phi_n)\rVert\cdot \lVert A_n-A_\infty\rVert_{L_1^2} \to0.
      \end{equation}
      Similar to the computation of Euler--Lagrange equation of $\mathcal{L}_\alpha$, we have
      \begin{align*}
        \left\langle D \mathcal{L}_\alpha(A_n,\phi_n),(A_n-A_\infty,0) \right\rangle
    &=\int_{\Sigma}\left\langle F_{A_n},D_{A_n}(A_n-A_\infty)\right\rangle\\
    &\qquad+\int_\Sigma\left\langle\alpha(1+|\nabla_{A_n}\phi_n|^2)^{\alpha-1}\nabla_{A_n}\phi_n,(A_n-A_\infty)\phi_n \right\rangle\\
    &=\mathpunct{:}\I+\II.
      \end{align*}
      Since $A_n-A_\infty\weakto 0$ in $L_1^2$, it follows that $\lVert A_n-A_\infty\rVert_{L^q}\to0$ for any $1\leq q<+\infty$ by the Sobolev embedding theorems. Now, by H\"older's inequality
      \begin{align*}
        \lvert \II \rvert&\leq\alpha\lVert 1+|\nabla_{A_n}\phi_n|^2\rVert_{L^{\alpha}}^{\alpha-1}\cdot\lVert \nabla_{A_n}\phi_n\rVert_{L^{2\alpha}}
        \cdot\lVert \phi_n\rVert_{L^\infty}\cdot\lVert A_n-A_\infty\rVert_{L^{2\alpha}}\\
                         &\leq C(\Lambda)\lVert A_n-A_\infty\rVert_{L^{2\alpha}}\to0,
      \end{align*}
      as $n\to\infty$. For $\I$, we can compute, for $a_n=A_n-A_0\in \Omega^1(\mathfrak{g}_{\mathcal{P}})$ and $a_\infty=A_\infty-A_0\in \Omega^1(\mathfrak{g}_{\mathcal{P}})$,
      \begin{align*}
        \I&=\int_\Sigma\left\langle F_{A_0}+D_{A_0}a_n+a_n\wedge a_n,D_{A_0}(a_n-a_\infty)+[a_n\wedge(a_n-a_\infty)] \right\rangle\\
          &=\int_\Sigma|D_{A_0}(a_n-a_\infty)|^2+\int_\Sigma\left\langle D_{A_0}a_\infty+a_n\wedge a_n,D_{A_0}(a_n-a_\infty) \right\rangle\\
          &\qquad+\int_\Sigma\left\langle F_{A_n},[a_n\wedge(a_n-a_\infty)] \right\rangle.
      \end{align*}
      Note that $D_{A_0} a_\infty\in L^2$, $\lVert F_{A_n}\rVert_{L^2}^2<\Lambda$, and~\eqref{eq:An-A-infty-bdd} implies that $\lVert a_n\wedge a_n\rVert_{L^2}<C\lVert a_n\rVert_{L^4}^2<C'\lVert A_n-A_0 \rVert_{L_1^2}^2<C''$ by the Sobolev embedding. Thus, by the definition of weak convergence and the H\"older's inequality, we know that the last two terms in $\I$ tend to $0$ as $n\to\infty$ and
      \[
        \I\to\lVert D_{A_0}(A_n-A_\infty)\rVert_{L^2}.
      \]
      Inserting the estimates of $\I$ and $\II$ into~\eqref{eq:DL-alpha}, we obtain that
      \[
        \lVert D_{A_0}(A_n-A_\infty)\rVert_{L^2}\to0.
      \]
      Since $A_n-A_\infty\weakto 0$ in $L_1^2$ and $A_n-A_\infty\to0$ strongly in $L^2$, we conclude that $A_n\to A_\infty$ strongly in $\mathscr{A}_1^2$.
      \step Next, we show that for fixed $A_\infty\in \mathscr{A}_1^2$, $\mathcal{L}_\alpha(A_\infty,\cdot)$ satisfies the Palais--Smale condition in $\mathscr{S}_{1,K}^{2\alpha}$. Recall, for $\alpha>1$, $\mathscr{S}_{1,K}^{2\alpha}$ is defined as a subspace of $L_1^{2\alpha}(\mathcal{E})$,
      \[
        \mathscr{S}_{1,K}^{2\alpha}\mathpunct{:}=\left\{ \phi\in L_1^{2\alpha}(\mathcal{E}):\phi(x)\in \mathcal{F}\xtext{for a.e. $x\in \Sigma$}\xtext{and}\phi(x)\in \mathcal{K}\xtext{for a.e. $x\in\partial\Sigma$} \right\},
      \]
      We note the following facts:
      \begin{itemize}
        \item As a closed sub-manifold of Banach manifold $L_1^{2\alpha}(\mathcal{E})$, $\mathscr{S}_{1,K}^{2\alpha}$ can be given a structure of smooth Banach manifold. In particular, $\mathscr{S}_{1,K}^{2\alpha}$ is complete under the pull-back Finsler metric $\lVert \cdot \rVert_{L_1^{2\alpha}(\mathcal{E})}$.
        \item The tangent space of $\mathscr{S}_{1,K}^{2\alpha}$ is given by
          \[
            T_\phi \mathscr{S}_{1,K}^{2\alpha}=\left\{ \psi\in L_1^{2\alpha}(\phi^*T \mathcal{F}^v):\psi(x)\in T_{\phi(x)}\mathcal{K}^v\xtext{for a.e. $x\in\partial\Sigma$} \right\}.
          \]
      \end{itemize}
      Now define
      \[
        \mathscr{L}\mathpunct{:}L_1^{2\alpha}(\mathcal{E})\to \mathbb{R},\quad
        \mathscr{L}(\phi)=\int_{\Sigma}(1+|\nabla_{A_\infty}\phi|^2)^{\alpha}+|F_{A_\infty}|^2+|\mu(\phi)|^2,
      \]
      where we extend $\mu$ to the sections of $\mathcal{E}$ by $\tilde{\mu}(\phi)\mathpunct{:}=\eta(x)\cdot \mu\bigl(\pi_N(\phi)\bigr)$, here $\pi_N$ is the nearest projection from a neighborhood $\mathcal{N}$ of $\mathcal{F}$ in $\mathcal{E}$ to $\mathcal{F}$, and $\eta$ is a cutoff function supported on $\mathcal{N}$ and equals to $1$ when restricted to $\mathcal{F}$. Clearly, $\mathscr{L}(\phi)=\mathcal{L}_\alpha(A_\infty,\phi)$ when we restrict $\phi$ to $\mathscr{S}_{1,K}^{2\alpha}$, which is denoted by $\mathcal{J}(\phi)$. We can imitate the argument of~\cite{Urakawa1993Calculus}*{Sect.~3.2, p.~105ff} to show
      \begin{enumerate}
        \item $\mathcal{J}$ is a $C^2$ function on $\mathscr{S}_{1,K}^{2\alpha}$.
        \item There exists a positive constant $C$ (depending on $\mu$) such that for any $\phi_1,\phi_2\in L_1^{2\alpha}(\mathcal{E})$,
          \begin{equation}\label{eq:claim-eng-control}
            (D \mathscr{L}_{\phi_1}-D \mathscr{L}_{\phi_2})(\phi_1-\phi_2)\geq
            \begin{multlined}[t]
              C\bigl( \lVert \phi_1-\phi_2\rVert_{L_1^{2\alpha}}^{2\alpha}-\lVert \phi_1-\phi_2\rVert_{L^{2\alpha}}^{2\alpha}-\lVert \phi_1-\phi_2\rVert_{L^2}^2 \bigr).
            \end{multlined}
          \end{equation}
        \item Suppose $\left\{ \phi_n \right\}\subset \mathscr{S}_{1,K}^{2\alpha}$ is a bounded sequence under the norm of $L_1^{2\alpha}(\mathcal{E})$, then there exists a sub-sequence such that
          \[
            \lVert (\id-\Pi_{\phi_n})(\phi_n-\phi_m)\rVert_{L_1^{2\alpha}} \to0,\quad \xtext{as} m,n\to0,
          \]
          where $\id$ is the identity map, and $\Pi_{\phi_n}$ is the fiber-wise orthogonal projection from $\mathcal{E}$ to $T\mathscr{S}_{1,K}^{2\alpha}$ at $\phi_n$. Here we should be careful about the projection at the boundary. As we require that the projected section lies in $T_{\phi_n} \mathscr{S}_{1,K}^{2\alpha}$, which requires, at the boundary, it is a vector of $T_{\phi_n(x)} \mathcal{K}^v$. This is accomplished by first defining the projections
          \begin{align*}
            \Pi^E_\phi \mathpunct{:} L_1^{2\alpha}(\mathcal{E})&\to L_1^{2\alpha}(\phi^*T \mathcal{F}^v)\\
            \psi&\mapsto\Pi^E_\phi(\psi),\quad [\Pi^E_\phi(\psi)](x)\mathpunct{:}=\Pi^E(\phi(x))(\psi(x)),
          \end{align*}
          and
          \begin{align*}
            \Pi_\phi^{K^\perp} \mathpunct{:}L_1^{2\alpha}(\mathcal{E})&\to L_1^{2\alpha}(\phi^*T \mathcal{K}^v)\\
            \psi&\mapsto\Pi_\phi^{K^\perp}(\psi),\quad[\Pi_\phi^{K^\perp}(\psi)](x)\mathpunct{:}=\Pi^{K^\perp}(\phi(x))(\psi(x)).
          \end{align*}
          Here $\Pi^E(y)$ denotes the orthogonal projection of the fiber $\mathcal{E}_y$ onto the tangent space $T_y \mathcal{F}^v$ for $y\in \mathcal{F}^v$, and $\Pi^{K^\perp}(y)$ denotes the orthogonal projection of $\mathcal{E}_y$ onto $T_y^\perp \mathcal{K}^v$ for $y\in \mathcal{K}^v$ with respect to the decomposition $\mathcal{E}_y=T_y \mathcal{K}^v\oplus T_y^\perp \mathcal{K}^v$. Then our real projection $\Pi_\phi$ is defined by
          \begin{align*}
            \Pi_\phi \mathpunct{:} L_1^{2\alpha}(\mathcal{E})&\to T_\phi \mathscr{S}_{1,K}^{2\alpha}\\
            \psi&\mapsto\Pi_\phi(\psi)\mathpunct{:}=\Pi_\phi^E\left( \psi- \theta\left( \Pi_\phi^{K^\perp}(\psi|_{\partial\Sigma}) \right)\right),
          \end{align*}
          where $\theta \mathpunct{:} L_{1-1/(2\alpha)}^{2\alpha}(\mathcal{E}|_{\partial\Sigma})\to L_1^{2\alpha}(\mathcal{E})$ is a continuous linear extension operator from the trace space of $L_1^{2\alpha}(\mathcal{E})$.
        \item There exists a sub-sequence of $\left\{ \phi_n \right\}$, such that
          \begin{equation}\label{eq:claim-eng}
            D \mathscr{L}_{\phi_n}(\phi_n-\phi_m)\to0,\quad\xtext{as}m,n\to\infty.
          \end{equation}
      \end{enumerate}
      Now, we continue the verification of Palais--Smale condition of $\mathcal{L}_\alpha(A_\infty,\cdot)$. Since $\mathcal{L}_\alpha(A_n,\phi_n)\leq\Lambda$ and $A_n\to A_\infty$ strongly in $\mathscr{A}_1^2$, we know that $\mathcal{L}_\alpha(A_n,\phi_n)\to \mathcal{L}_\alpha(A_\infty,\phi_n)$ as $n\to\infty$. In particular, $\left\{ \phi_n \right\}$ is bounded in $L_1^{2\alpha}(\mathcal{E})$. By~\eqref{eq:claim-eng}, we can choose a sub-sequence such that $D \mathscr{L}_{\phi_n}(\phi_n-\phi_m)\to0$ as $m,n\to\infty$. It is clear that
      \[
        (D \mathscr{L}_{\phi_n}-D \mathscr{L}_{\phi_m})(\phi_n-\phi_m)\to0,\quad\xtext{as} m,n\to\infty.
      \]
      Note also that $\left\{ \phi_n \right\}$ is bounded in $L_1^{2\alpha}(\mathcal{E})$, the weak compactness of this Sobolev space implies that there is a convergent sub-sequence and so, for such a sequence we have
      \[
        \lVert \phi_n-\phi_m\rVert_{L^{2\alpha}}\to0\quad\xtext{and}\quad\lVert \phi_n-\phi_m\rVert_{L^2}\to0,\quad\xtext{as}m,n\to\infty.
      \]
      Thus,~\eqref{eq:claim-eng-control} implies that
      \[
        \lVert \phi_n-\phi_m\rVert_{L_1^{2\alpha}}\to0,\quad\xtext{as}m,n\to\infty,
      \]
      i.e., $\left\{ \phi_n \right\}$ is a Cauchy sequence in $\mathscr{S}_{1,K}^{2\alpha}$. It is convergent because $\mathscr{S}_{1,K}^{2\alpha}$ is complete. As we have shown that $A_n$ converges to $A_\infty$ in $\mathscr{A}_1^2$ up to sub-sequence, and for such sub-sequence, $\left\{ \phi_n \right\}$ converges to some $\phi_\infty$ in $\mathscr{S}_{1,K}^{2\alpha}$, it follows that $\mathcal{L}_\alpha$ satisfies the Palais--Smale condition.
    \end{proof}
    \subsection{A regularity theorem for coupled equations}\label{sec:couple-regularity}
    We prove in this section a regularity theorem for H\"older continuous weak solution of some coupled equations, which is an extension of the classical regularity results by Ladyzhenskaya-Ural\cprime ceva and Morrey. The idea is that, when the coupling relations of the coupled system satisfies the conditions in~\eqref{eq:coupled-condi}, then the bad terms appeared due to the coupling relations are controllable.
    \begin{proof}[Proof of~\autoref{thm:Morrey}]
      For any fixed $x_0\in\Omega$, by the relation between weak derivatives and difference quotients, we only need to show the uniform boundedness of $\left\{ \lVert \nabla z_h \rVert_{L^2(B_r)} \right\}_{0<h<r}$, where $B_r=B_r(x_0)$, and $r>0$ is a small real number to be determined latter. Currently, we only assume $0<8r<\mathrm{dist}(x_0,\partial \Omega)=r_0$. $z_h$ is the difference quotient defined as follows: for any fixed coordinate direction $e_\gamma$ and real number $h$, $0<\lvert h \rvert<r$,
      \[
        z_h=(z_{1h},z_{2h}), \quad z_{ah}^i \mathpunct{:}=\Delta_\gamma^h z^i \mathpunct{:}=\frac{z_a^i(x+he_\gamma)-z_a^i(x)}{h},\quad a=1,2,\, i=1,2,\ldots,m_a.
      \]
      Now, let us fix some $D'\subset\!\subset \Omega_r \mathpunct{:}=\left\{ x\in\Omega| \mathrm{dist}(x,\partial\Omega)\geq r \right\}$, $x_0\in D'$ and let $\xi=(\xi_1,\xi_2)$ be a test function with $\mathrm{supp}\,\xi\subset\!\subset D'$. We denote by $\xi_h$ the difference quotient of $\xi$ and substitute $\xi$ in~\eqref{eq:coupled-weak} by $\xi_{-h}$, hereafter the repeated indices $\alpha$, $\beta$, $a$, $b$, $i$ and $j$ are summed,
      \begin{equation}\label{eq:coupled-weak-diff}
        0=\int_{D'}\partial_\alpha\xi_a^i\Delta_\gamma^hq_{ai}^\alpha+\xi_{a}^i\Delta_\gamma^h w_{ai}.
      \end{equation}
      If we set $\Delta x \mathpunct{:}=he_\gamma$, $\Delta z=z(x+\Delta x)-z(x)$ and $\Delta p=p(z(x+\Delta x))-p(z(x))$, then since $q_a\in L_{1,\loc}^1\cap C^0(\Omega\times \mathbb{R}^m\times \mathbb{R}^{mn})$ and $z_a\in L_1^{k_a}(\Omega)$, by the fundamental theorem of calculus for distributions (see \cite{LiebLoss2001Analysis}*{Thm.~6.9}) and the chain rule (see \cite{LiebLoss2001Analysis}*{Thm.~6.16}), for a.e. $x\in\Omega$, we have
      \begin{align*}
        \Delta_\gamma^hq_{ai}^\alpha&= \frac{1}{h}\int_0^1 \frac{d}{dt}q_{ai}^\alpha\left(x+t\Delta x,z(x)+t\Delta z,p(z(x))+t\Delta p\right)dt\\
                                    &=\int_0^1\left( \partial_\gamma q_{ai}^\alpha(t)+\partial_{z_b^j}q_{ai}^\alpha(t)z_{bh}^j+\partial_{p_{b\beta}^j}q_{ai}^\alpha(t)\partial_\beta z_{bh}^j \right)dt,\quad a=1,2,
      \end{align*}
      where $q_{ai}^\alpha(t) \mathpunct{:}=q_{ai}^\alpha(x+t\Delta x,z(x)+t\Delta z, p(z(x))+t\Delta p)$. Define $w_{ai}(t)$ similarly, we have
      \[
        \Delta_\gamma^h w_{ai}=\int_0^1\left( \partial_\gamma w_{ai}(t)+\partial_{z_b^j}w_{ai}(t)z_{bh}^j+\partial_{p_{b\beta}^j}w_{ai}(t)\partial_\beta z_{bh}^j \right)dt,\quad a=1,2.
      \]
      Therefore, we can rewrite~\eqref{eq:coupled-weak-diff} in matrix form as
      \begin{equation}\label{eq:coupled-weak-diff'}
        0=\int_{D'}\int_0^1\nabla\xi\cdot\bigl( q_p(t)\cdot\nabla z_h + q_z(t)\cdot z_h+ q_x(t) \bigr)+\int_{D'}\int_0^1\xi\cdot\bigl( w_p(t)\cdot\nabla z_h+w_z(t)\cdot z_h+w_x(t) \bigr).
      \end{equation}
      Next, let $\eta\in C_0^\infty(\Omega)$ be a cutoff function satisfying
      \[
        0\leq\eta\leq1,\quad\eta|_{D}\equiv1,\quad \mathrm{supp}\,\eta\subset\!\subset D_r',\quad\lvert \nabla\eta \rvert\leq 8/r,
      \]
      where $D\subset\!\subset D_{r}'\subset\!\subset D'\subset\!\subset\Omega_r$, and $D_{r}' \mathpunct{:}=\left\{ x\in D'|\mathrm{dist}(x,\partial D')\geq r \right\}$. If we set $Z_a^i=\eta z_{ah}^i$ and $\xi_a^i=\eta Z_a^i$, then
      \[
        \eta\nabla z_h=\nabla Z-\nabla\eta z_h, \quad \nabla\xi=\eta(\nabla Z+\nabla\eta z_h).
      \]
      By~\eqref{eq:coupled-weak-diff'}, and note that we did not assume $q_p$ is symmetric (i.e., $\partial_{p_{b\beta}^j}q_{ai}^\alpha\neq\partial_{p_{a\alpha}^i}q_{bj}^\beta$ in general),
      \begin{equation}\label{eq:dz-qp-dz}
        \begin{split}
      &\int_{D'}\left[ \nabla Z\cdot\int_0^1q_p(t)dt \cdot\nabla Z \right]\\
      &\qquad=\int_{D'}\nabla Z\cdot\int_0^1 q_p(t)dt\cdot\nabla\eta z_h-\int_{D'}\nabla\eta z_h\cdot\int_0^1 q_p(t)\cdot \nabla Z\\
      &\qquad\qquad+\int_{D'}\left[ \nabla\eta z_h\cdot\int_0^1q_p(t)dt\cdot\nabla\eta z_h \right]
      -\int_{D'}\left[ (\nabla Z+\nabla\eta z_h)\cdot\int_0^1q_z(t)dt\cdot Z \right]\\
      &\qquad\qquad-\int_{D'}\left[ \eta(\nabla Z+\nabla\eta z_h)\cdot\int_0^1q_x(t)dt \right]
      -\int_{D'}\left[ Z\cdot\int_0^1w_p(t)dt\cdot(\nabla Z-\nabla\eta z_h) \right]\\
      &\qquad\qquad-\int_{D'}\left[ Z\cdot\int_0^1w_z(t)dt\cdot Z \right]
      -\int_{D'}\left[ \eta Z\cdot\int_0^1w_x(t)dt \right].
        \end{split}
      \end{equation}
      To simplify the notations, let us set
      \[
        A_{ah} \mathpunct{:}=\int_0^1V_a^{k_a-2}(t)dt,\quad A_{ah}P_{ah} \mathpunct{:}=\int_0^1V_a^{k_a-1}(t)dt,\quad A_{ah}Q_{ah} \mathpunct{:}=\int_0^1V_a^{k_a}(t)dt,
      \]
      where $V_a(t)\mathpunct{:}=(1+\lvert p_a(z(x))+t\Delta p_a \rvert^2)^{1/2}$. Clearly,
      \[
        A_{ah}\geq1,\quad P_{ah}\geq1,\quad Q_{ah}\geq1,\quad P_{ah}^2\leq Q_{ah}.
      \]
      Although our condition \eqref{eq:coupled-condi} is only satisfied almost everywhere on $\Omega$, we essentially use these conditions in integral form and the value on a subset of measure zero will not affect the result. The ellipticity condition in~\eqref{eq:coupled-condi} implies
      \[
        \int_{D'}\left[ \nabla Z\cdot\int_0^1q_p(t)dt\cdot\nabla Z \right]\geq\lambda(R)\int_{D'}A_{ah}\lvert \nabla Z_a \rvert^2,
      \]
      where $R$ is the upper bound for $x\in D'$ and $z=z(x)$, i.e., $\lvert x \rvert^2+\lvert z \rvert^2\leq R^2$.
      The right-hand side terms of \eqref{eq:dz-qp-dz} can be controlled by condition~\eqref{eq:coupled-condi} and Cauchy-Schwarz inequality. Here, we demonstrate the estimates of the terms $q_z$ and $w_p$, which explains that the coupling structure of $q_z$ and $w_p$ in \eqref{eq:coupled-condi} is crucial.
      \begin{align*}
        -\int_{D'}\left[ \nabla Z\cdot\int_0^1q_z(t)dt\cdot Z \right]
     &\leq\Lambda(R)\int_{D'}\left[ A_{ah}P_{ah}\lvert \nabla Z_a \rvert\lvert Z_a \rvert+A_{2h} \lvert \nabla Z_2 \rvert\lvert Z_1 \rvert \right] \\
     &\leq\Lambda(R)\int_{D'}\left[ \epsilon A_{ah}\lvert \nabla Z_a \rvert^2+\frac{1}{4\epsilon}A_{ah}P_{ah}^2\lvert Z_a \rvert^2+ \frac{1}{4\epsilon}A_{2h}\lvert Z_1 \rvert^2 \right]\\
     -\int_{D'}\left[ Z\cdot\int_0^1w_p(t)dt\cdot\nabla Z \right]
     &\leq\Lambda(R)\int_{D'}\left[ \epsilon A_{ah}\lvert \nabla Z_a \rvert^2+\frac{1}{4\epsilon}A_{ah}P_{ah}^2\lvert Z_a \rvert^2+\frac{1}{4\epsilon}A_{2h}\lvert Z_1 \rvert^2 \right].
   \end{align*}
   Therefore (recall that $\Lambda=\Lambda(R)$ and $\lambda=\lambda(R)$),
   \begin{equation}\label{eq:AQZ2}
     \int_{D'}A_{ah}\lvert \nabla Z_a \rvert^2
     \leq C(\Lambda,\lambda)\int_{D'}\left[ A_{ah}\lvert \nabla\eta \rvert^2\lvert z_{ah} \rvert^2
     +A_{ah}Q_{ah}\left( 1+\lvert Z_a \rvert^2 \right)+A_{2h}P_{2h} \lvert Z_1 \rvert^2 \right].
   \end{equation}

   Now, we need the following claim to handle $\int_{D'}A_{ah}Q_{ah}\lvert Z_a \rvert^2$.
   \begin{claim}[\citelist{\cite{Morrey2008Multiple}*{Lem.~5.9.1}\cite{LadyzhenskaiaUraltzeva1961smoothness}*{Lem.~2}}]
     With the assumption of \autoref{thm:Morrey}, we have for any $\delta>0$ and any $x_0\in\Omega$, there exists $\rho$, $0<\rho\leq 4r$, where $0<8r<r_0$, depending on $\delta$, $\lvert x_0 \rvert$, $\lvert z(x_0) \rvert$, $\lambda$, $\Lambda$ and $h_\mu(\Omega_r)$ --- the modulus of (H\"older) continuity of the solution over $\Omega_r$, such that for $B_\rho= B_\rho(x_0)$,
     \[
       \int_{B_\rho}V_a^{k_a}\xi_a^2dx\leq\delta\int_{B_\rho}V_a^{k_a-2}\lvert \nabla\xi_a\rvert^2+\int_{B_\rho}V_2^{k_2-1}\xi_1^2,\quad\forall\xi_a\in L_{1,0}^{k_a}\cap C^0(B_\rho).
     \]
   \end{claim}
   In fact, by the assumption of \autoref{thm:Morrey} and note that $z\in C^\mu(\Omega)$, if $\rho\leq \max\left\{ 4r,1 \right\}$, then we have for almost all $x\in B_{\rho}$, $\lvert x \rvert^2+\lvert z(x) \rvert^2\leq R_1^2$, where $R_1>0$ is a constant depending on $\lvert x_0 \rvert$, $\lvert z(x_0) \rvert$ and $h_\mu(\Omega_r)$, and the natural structure condition \eqref{eq:coupled-condi} holds for $R=R_1$ over $B_\rho$. Let $\zeta^i_a(x)\mathpunct{:}=\xi^2_a(x)\left( z_{a}^i(x)-z_{a}^i(x_0) \right)\in L_{1,0}^{k_a}\bigcap C^0(B_\rho)$ be the test function in~\eqref{eq:coupled-weak}, we have
   \[
     \int_{B_\rho}2\xi_a\partial_\alpha\xi_aq_{ai}^\alpha(z_{a}^i-z_{a}^i(x_0))+\xi_a^2\left( p_{a\alpha}^iq_{ai}^\alpha+\left( z_{a}^i-z_{a}^i(x_0) \right)w_{ai} \right)=0.
   \]
   The condition given in~\eqref{eq:coupled-condi} implies that
   \begin{align*}
     \xi_a^2p_{a\alpha}^iq_{ai}^\alpha(x,z,p)&=\xi_a^2p_{a\alpha}^iq_{ai}^\alpha(x,z,0)+\xi_a^2p_{a\alpha}^i\int_0^1 \frac{\partial q_{ai}^\alpha(x,z,tp)}{\partial p_{b\beta}^j}p_{b\beta}^jdt\\
                                             &\geq \xi_a^2p_{a\alpha}^iq_{ai}^\alpha(x,z,0)+\lambda\int_0^1\xi_a^2\lvert p_a\rvert^2(1+|tp_a|^2)^{k_a/2-1}dt\\
                                             &\geq \lambda\int_0^1\xi_a^2\lvert p_a\rvert^2\lvert tp_a\rvert^{k_a-2}-\xi_a^2\lvert p_a\rvert\lvert q_{ai}^\alpha(x,z,0)\rvert\\
                                             &\geq \lambda \xi_a^2\frac{1}{k_a-1}\lvert p_a\rvert^{k_a}-\Lambda\xi_a^2\lvert p_a\rvert.
   \end{align*}
   Since
   \begin{gather*}
     V_a^{k_a}=(1+\lvert p_a\rvert^2)^{k_a/2}\leq 2^{k_a/2-1}\left( 1+\lvert p_a\rvert^{k_a} \right)
     \intertext{and}
     \Lambda\lvert p_a\rvert=\epsilon^{-1/k_a}\Lambda\cdot\epsilon^{1/k_a}\lvert p_a\rvert\leq\epsilon \frac{\lvert p_a\rvert^{k_a}}{k}+\epsilon^{-k_a^*/k_a}\frac{\Lambda^{k_a^*}}{k_a^*},
   \end{gather*}
   we conclude that, for some constants $\lambda', \Lambda'$ depending on $\lambda=\lambda(R_1),\Lambda=\Lambda(R_1)$,
   \[
     \xi_a^2p_{a\alpha}^iq_{ai}^\alpha(x,z,p)\geq \lambda'\xi_a^2V_a^{k_a}-\Lambda'\xi_a^2.
   \]
   Also, from~\eqref{eq:coupled-condi}
   \[
     \lvert q_a \rvert\leq\Lambda V_a^{k_a-1},\forall\,a\in\left\{ 1,2 \right\},\quad
     \lvert w_{1} \rvert\leq\Lambda\left( V_1^{k_1}+V_2^{k_2-1} \right),\quad
     \lvert w_{2}\rvert\leq \Lambda V_2^{k_2}.
   \]
   It follows that
   \begin{align*}
     \int_{B_\rho}\xi_a^2V_a^{k_a}&\leq C(\Lambda,\lambda)\int_{B_\rho}\xi_a^2-\int_{B_\rho}\xi_a\left( 2\partial_\alpha\xi_aq_{ai}^\alpha+\xi_aw_{ai} \right)\left( z_a^i-z_a^i(x_0) \right)\\
      &\leq C(\Lambda,\lambda)\int_{B_\rho}\left[ \xi_a^2+\sup_{B_\rho}\lvert z-z(x_0) \rvert\left( \left( V_a^{k_a-2}\lvert \nabla\xi_a \rvert^2+V_a^{k_a}\xi_a^2 \right)+V_2^{k_2-1}\xi_1^2 \right) \right].
   \end{align*}
   Now, the Poincar\'e inequality implies that (note that $\xi_a\in L_{1,0}^{k_a}(B_\rho)$),
   \[
     \int_{B_\rho}\xi_a^2\leq C\rho^2\int_{B_\rho}\lvert \nabla\xi_a \rvert^2\leq C\rho^2\int_{B_\rho}\lvert \nabla\xi_a \rvert^2V_a^{k_a-2}.
   \]
   The claim follows from the fact that $z_a\in C^\mu(\bar B_\rho)$ and $\sup_{B_\rho}\lvert z-z(x_0) \rvert$ can be chosen as small as we need, provided that $\rho$ is small enough.

   To apply the above claim, we take $r=\rho/4$ further small, where $\rho$ is the constant in the above claim, $D'=B_{3r}=B_{3r}(x_0)$, $D=B_r=B_r(x_0)$, clearly $B_{4r}=B_{4r}(x_0)\subset\!\subset\Omega$, and $\tilde{z}_a \mathpunct{:}=z(\cdot+he_\gamma)\in C^\mu(\bar B_{3r})$ for any $0<\lvert h \rvert<r$. Moreover, $\tilde{z}=(\tilde{z}_1,\tilde{z}_2)$ solves~\eqref{eq:coupled-weak} with $\tilde{q} \mathpunct{:}=q(x+he_\gamma,\cdot,\cdot)$ and $\tilde{w} \mathpunct{:}=w(x+he_\gamma,\cdot,\cdot)$; and as coefficients they satisfy the condition~\eqref{eq:coupled-condi} on $B_{3r}$ with $R=R_1$. Thus, we can apply the above claim in $B_{3r}$ for $\tilde{z}$ to obtain (note that $4r\leq\rho$)
   \[
     \int_{B_{3r}}\tilde{V}_a^{k_a}\xi_a^2\leq\delta\int_{B_{3r}}\tilde{V}_a^{k_a-2}\lvert \nabla\xi_a \rvert^2+\int_{B_{3r}}\tilde{V}_2^{k_2-1}\xi_1^2,\quad\forall\xi_a\in L_{1,0}^{k_a}\cap C^0(B_{3r}),
   \]
   where
   \[
     \tilde{V}_a^2=1+\lvert p_a(x+he_\gamma) \rvert^2=1+\lvert p_a+\Delta p_a \rvert^2.
   \]
   Since $\mathrm{supp}\,\eta\subset\!\subset D_r'=B_{2r}$ and $z_{ah}\in L_1^{k_a}\cap C^0(B_{3r})$, we can take $\xi_a=Z_a=\eta z_{ah}$ to obtain
   \[
     \int_{B_{3r}}\tilde{V}_a^{k_a}\lvert Z_a \rvert^2\leq\delta\int_{B_{3r}}\tilde{V}_a^{k_a-2}\lvert \nabla Z_a \rvert^2+\int_{B_{3r}}\tilde{V}_2^{k_2-1}\lvert Z_1 \rvert^2.
   \]
   Clearly,
   \[
     \int_{B_{3r}} V_a^{k_a}\lvert Z_a \rvert^2\leq\delta\int_{B_{3r}} V_a^{k_a-2}\lvert \nabla Z_a \rvert^2+ \int_{B_{3r}}V_2^{k_2-1}\lvert Z_1 \rvert^2.
   \]
   Now, we can estimate
   \begin{align*}
     \int_{B_{3r}}A_{ah}Q_{ah}\lvert Z_a \rvert^2&=\int_{B_{3r}}\int_{0}^1\left( 1+\lvert p_a+t\Delta p_a \rvert^2 \right)^{k_a/2}\lvert Z_a \rvert^2dt\\
                                                 &\leq C\int_{B_{3r}}\left( \tilde{V}_a^{k_a}+V_a^{k_a} \right)\lvert Z_a \rvert^2\\
                                                 &\leq C\delta\int_{B_{3r}} \left( \tilde{V}_a^{k_a-2}+V_a^{k_a-2} \right)\lvert \nabla Z_a \rvert^2+C\int_{B_{3r}}\left( V_2^{k_2-1} + \tilde{V}_2^{k_2-1} \right)\lvert Z_1 \rvert^2 \\
                                                 &\leq \frac{C\delta}{c}\int_{B_{3r}} \int_0^1\left( 1+\lvert p_a+t\Delta p_a \rvert^2 \right)^{k_a/2-1}\lvert \nabla Z_a \rvert^2+C\int_{B_{3r}}\left( V_2^{k_2-1} + \tilde{V}_2^{k_2-1} \right)\lvert Z_1 \rvert^2 \\
                                                 &=\frac{C\delta}{c}\int_{B_{3r}} A_{ah}\lvert \nabla Z_a \rvert^2+\left( V_2^{k_2-1} + C\int_{B_{3r}}\tilde{V}_2^{k_2-1} \right)\lvert Z_1 \rvert^2 ,
   \end{align*}
   where in the second and fourth lines, we used the following elementary inequalities (see~\cite{Morrey2008Multiple}*{p.~189, (5.9.4)}). For $q=k_a/2$ or $q=k_a/2-1$, there exist some constants $c$, $C$ such that
   \begin{multline*}
     c\left((1+|p_a|^2)^q+(1+|p_a+\Delta p_a|^2)^q\right)\\
     \leq \int_0^1(1+|p_a+t\Delta p_a|^2)^qdt\\
     \leq C\left((1+|p_a|^2)^q+(1+|p_a+\Delta p_a|^2)^q\right).
   \end{multline*}
   Thus, by~\eqref{eq:AQZ2}, if we take $\delta$ small enough
   \begin{equation}\label{eq:l22-estimate}
     \int_{B_r}A_{ah}\lvert \nabla z_{ah} \rvert^2\leq C(\Lambda,\lambda)\int_{B_{3r}}\left[ A_{ah}\left( \frac{\lvert z_{ah} \rvert^2}{r^2}+Q_{ah} \right)+\left( A_{2h}P_{2h}+V_2^{k_2-1}+\tilde{V}_2^{k_2-1} \right)\lvert z_{1h} \rvert^2 \right].
   \end{equation}
   Since $p_{a\gamma}=\partial_\gamma z_{a}\in L^{k_a}(B_{4r})$ by assumption, the relation of weak derivatives and differential quotients implies $z_{ah} \mathpunct{:}=\Delta_\gamma^h z_a\in L^{k_a}(B_{3r})$ too and $z_{ah}\to\partial_\gamma z_{a}$ in $L^{k_a}(B_{3r})$. Since $V_a(t)=(1+|p_a+t\Delta p_a|^2)^{1/2}\in L^{k_a}(B_{3r})$, we know that $V_a^{k_a-2}(t)\in L^{k_a/(k_a-2)}(B_{3r})$ and $A_{ah}=\int_0^1 V_a^{k_a-2}(t)dt\to A_a=V_a^{k_a-2}$ in $L^{k_a/(k_a-2)}(B_{3r})$ by~\cite{Morrey2008Multiple}*{Thm.~3.6.8}. A similar argument shows that $A_{ah}P_{ah}=\int_0^1V_a^{k_a-1}(t)dt\to V_a^{k_a-1}$ in $L^{k_a/(k_a-1)}(B_{3r})$ and $A_{ah}Q_{ah}=\int_0^1 V_a^{k_a}(t)dt\to V_a^{k_a}$ in $L^1(B_{3r})$. Applying H\"older's inequality, we know that the right-hand side of~\eqref{eq:l22-estimate} is uniformly bounded (independent of $h$). Here, we need the additional assumption $z_1\in L_1^{2k_2}(B_{4r})$ to conclude that the terms in the second parentheses of~\eqref{eq:l22-estimate} are uniformly bounded. Since $A_{ah}\geq1$, we conclude from~\eqref{eq:l22-estimate} that $\lVert \nabla z_{ah} \rVert_{L^2(B_r)}$ is uniformly bounded. But $z_{ah}\in L^{k_a}(B_r)$ with $k_a\geq2$ thus $z_{ah}$ is uniformly bounded in $L_1^2(B_r)$. The weak compactness implies $z_{ah} \weakto v_a$ in $L_1^2(B_r)$ for some sub-sequence $h\to0$. The compact embedding $L_1^2(B_r)\to L^{k_a}(B_r)$ implies that, after taking a further sub-sequence, $z_{ah}\to v_a$ in $L^{k_a}(B_r)$, but we already shown $z_{ah}\to \partial_\gamma z_{a}\in L^{k_a}(B_r)$ ($k_a\geq2$), thus $\partial_\gamma z_{a}=v_a\in L^2_1(B_r)$. Since $\gamma$ is arbitrary, it shows that $z_a\in L_2^2(B_r)$ and the proof is completed by the arbitrariness of $x_0$.
 \end{proof}
 \subsection{The smoothness of perturbed solution}\label{sec:smoothness}
 We first write down the Euler--Lagrange equation of $\mathcal{L}_\alpha$ locally in terms of Fermi coordinates, then the $L_2^2$-interior regularity follows directly from \autoref{thm:Morrey}. To prove the boundary regularity, we  extend the solution from half disc to the whole disc via a reflection argument. It turns out that such reflected solution satisfies an equation that is similar to the original one (with coefficients extended properly), c.f.~\autoref{lem:bdry-reg}. The verification of this fact is given by decomposing the test function through parity and check the parity of each coefficient. It is notable that in general the coefficient involving the Christoffel symbols of extended solution is only $L^\infty$ near the free boundary, and we cannot apply \autoref{thm:Morrey} directly to the extended solution to show the boundary regularity. This explains the additional requirement that $K\subset F$ is totally geodesic. Finally, the smoothness up to the boundary of critical points of $\mathcal{L}_\alpha$ for $\alpha-1$ small follows from a bootstrap of the $L_2^2$-strong solution.

 Locally, we take coordinate systems near the boundary as $\left\{ U;x=(x^1,x^2) \right\}$ with $\partial\Sigma\cap U=\set{x^2=0}$ and for any $(x^1,0)\in\partial\Sigma\cap U$, let $x^2\mapsto(x^1,x^2)$ be a regular geodesic orthogonal to $\partial\Sigma$. Let $B=\left\{ x\in\Sigma:|x|<1\right\}$ be the unit disc in $\Sigma$, $D=\left\{ x\in\Sigma:|x|<1,x^2\geq0 \right\}$ be the unit upper half disc in $\Sigma$, $\partial^0 D=\left\{ x\in\partial D:x^2=0 \right\}$ and $\partial^+D=\left\{ x\in\partial D:|x|=1 \right\}$. For simplicity, we use $U$ to denote either $B$ or $D$.
 The following theorem implies that locally we can always choose a representative that is in Coulomb gauge.
 \begin{thm}[\citelist{\cite{Uhlenbeck1982Connections}*{Thm.~2.1}\cite{Marini1992Dirichlet}*{Thm.~3.2$'$--3.3$'$}}]\label{thm:coulomb}
   Suppose $p\geq1$, $G$ is a compact Lie group and $\mathcal{U}\mathpunct{:}=U\times G$ is the trivial bundle on a disc/half disc $U\subset \mathbb{R}^2$ with flat metric on $U$. Then, there exists a uniform constant $\delta_0>0$, such that any connection $\tilde{A}\in L_1^p(\Omega^1(\mathcal{U}\times_{\mathrm{Ad}}\mathfrak{g}))$ with $\lVert F_{\tilde{A}}\rVert_{L^1(\Omega^2(\mathcal{U}\times_{\mathrm{Ad}}\mathfrak{g}))}\leq\delta_0$ is gauge equivalent to a connection $d+A\in L_1^p(\Omega^1(\mathcal{U}\times_{\mathrm{Ad}}\mathfrak{g}))$, i.e., for some $S\in L_2^p(\mathcal{U}\times_cG)$, $S^*\tilde{A}=d+A$, where $A$ satisfies
   \begin{enumerate}
     \item $d^*A=0$, where $*$ is the Hodge star operator with respect to the flat metric;
     \item $\nu\lh A=0$ for any $x\in\partial U$;
     \item\label{item:coulomb-est} $\lVert A\rVert_{L_1^p}\leq C\lVert F_A\rVert_{L^p}$.
   \end{enumerate}
 \end{thm}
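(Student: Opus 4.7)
The plan is to follow Uhlenbeck's continuity method, with the Marini reflection trick handling the Neumann boundary condition on the half-disc. I would connect $\tilde A$ to the zero connection along the path $\tilde A_t := t\tilde A$, $t\in[0,1]$, and show that the set
\[
T := \bigl\{\, t \in [0,1] \,:\, \text{there exists }S_t \in L_2^p(\mathcal{U}\times_c G)\text{ with }S_t^*\tilde A_t - d\text{ satisfying (i)--(iii)}\,\bigr\}
\]
equals all of $[0,1]$; specializing to $t=1$ then yields the theorem. Nonemptiness is immediate from $t=0$ with $S_0=\mathrm{id}$, $A_0=0$.

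For openness at $t_0 \in T$, after using $S_{t_0}$ to write $\tilde A_{t_0} = d + A_{t_0}$ in Coulomb gauge, I would apply the implicit function theorem to the map
\[
\Phi(t,u) := d^*\bigl(e^{-u}\tilde A_t e^u + e^{-u}de^u - d\bigr),\qquad u\in L_2^p(\mathcal{U}\times_{\mathrm{Ad}}\mathfrak{g}),
\]
together with the side condition $\nu\lh(e^{-u}\tilde A_t e^u + e^{-u}de^u - d) = 0$ on $\partial U$. The linearization at $(t_0,0)$ is the Hodge Laplacian $d^*d$ plus $A_{t_0}$-dependent lower order terms; coupled with the induced Neumann boundary condition $\partial_\nu u = 0$ it is Fredholm with kernel spanned by constants, so restricting to the orthogonal complement of constants gives an isomorphism and produces $S_t$ for $t$ in a neighborhood of $t_0$. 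For closedness, if $t_n \to t_\infty$ with $t_n\in T$, the a priori estimate (iii) yields $\|A_{t_n}\|_{L_1^p} \leq Ct_n\delta_0$ uniformly; combined with the gauge identity $dS_{t_n} = S_{t_n}A_{t_n} - \tilde A_{t_n}S_{t_n}$ this bounds $S_{t_n}$ in $L_2^p$, and Rellich--Kondrachov together with weak compactness produces limits $A_\infty$, $S_\infty$ realizing $t_\infty\in T$.

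The estimate (iii) itself would come from coupling $d^*A = 0$, $dA = F_A - A\wedge A$, and $\nu\lh A = 0$ into a first-order Hodge system: elliptic estimates with the appropriate mixed boundary conditions on the disc give
\[
\|A\|_{L_1^p} \leq C\bigl(\|F_A\|_{L^p} + \|A\|_{L^{2p}}^2\bigr),
\]
and the two-dimensional Sobolev embedding $L_1^p \hookrightarrow L^{2p}$ allows absorbing the quadratic term once $\|A\|_{L_1^p}$ is sufficiently small, which is guaranteed along the continuity path by choosing $\delta_0$ small. For the half-disc case, I would extend $A$ across $\partial^0 D$ by taking the tangential component $A_1$ even and the normal component $A_2$ odd, thereby producing a connection on the full disc to which the disc version applies; the restriction back to $D$ then automatically inherits $\nu\lh A = 0$ on $\partial^0 D$ as well as the Coulomb condition.

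The main obstacle is the boundary case: one must verify that this parity extension is genuinely compatible with both $d^*A = 0$ and the $L^p$ control of $F_A$, and that the gauge transformation recovered on the full disc is itself symmetric under the reflection so that its restriction is a well-defined gauge on $D$. Together with the need to propagate the smallness threshold uniformly along the continuity path (so that the absorption argument in the a priori estimate does not break down), this is the delicate part of Marini's adaptation of Uhlenbeck's original disc argument.
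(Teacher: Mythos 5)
The paper does not prove this statement; it is imported verbatim as a known result, citing Uhlenbeck's Theorem 2.1 for the interior disc and Marini's Theorems 3.2$'$--3.3$'$ for the Neumann problem on the half disc, so there is no ``paper's own proof'' to compare against. Your sketch is a reasonable reconstruction of the strategy used in those references: the continuity method, with the implicit function theorem giving openness, the a~priori Coulomb-gauge estimate plus compactness giving closedness, and a parity reflection across $\partial^0 D$ producing the half-disc case from the interior one. The reflection parities you choose ($A_1$ even, $A_2$ odd) are the right ones, and you correctly flag the need to verify that the recovered gauge transformation on the doubled disc is itself even under the reflection.

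There is, however, a genuine gap in the choice of continuity path. Writing $\tilde A = d + \tilde a$, the curvature along $\tilde A_t := d + t\tilde a$ is
\[
  F_{\tilde A_t} \;=\; t\,d\tilde a + t^2\,\tilde a\wedge\tilde a \;=\; t\,F_{\tilde A} + (t^2 - t)\,\tilde a\wedge\tilde a,
\]
and the second term is of size $\lVert \tilde a\rVert_{L^2}^2$, which the hypothesis $\lVert F_{\tilde A}\rVert_{L^1}\leq\delta_0$ does \emph{not} control. So for intermediate $t$ the curvature may leave the $\delta_0$-ball, and the circular mechanism you rely on --- smallness of $\lVert A_t\rVert_{L_1^p}$ propagates because $\lVert F_{A_t}\rVert_{L^p}=\lVert F_{\tilde A_t}\rVert_{L^p}$ stays small --- breaks down. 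The bound you assert, $\lVert A_{t_n}\rVert_{L_1^p}\leq Ct_n\delta_0$, is also not what item~(c) of the statement gives; it gives $\lVert A_{t_n}\rVert_{L_1^p}\leq C\lVert F_{\tilde A_{t_n}}\rVert_{L^p}$, and the right-hand side is bounded but not linearly by $t_n\delta_0$. In Uhlenbeck's argument (and standard expositions of it, e.g.\ Donaldson--Kronheimer), this problem is circumvented by replacing the linear path with a dilation path: after reducing to smooth $\tilde a$ by density, one sets $\tilde a_\lambda(x) := \lambda\,\tilde a(\lambda x)$ for $\lambda\in[0,1]$, for which the scale invariance of the $L^{n/2}$-curvature norm in dimension $n$ gives $\lVert F_{\tilde A_\lambda}\rVert_{L^{n/2}(B)}\leq \lVert F_{\tilde A}\rVert_{L^{n/2}(B)}\leq\delta_0$ uniformly, while $\tilde a_\lambda\to 0$ in $L_1^{n/2}$ as $\lambda\to 0$. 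With this path the closedness argument can be run as you describe; with the linear path it cannot, unless one additionally assumes $\lVert\tilde a\rVert_{L^n}$ is small, which is not part of the hypothesis.
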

 Suppose $\sigma\mathpunct{:}U\times F\to\pi^{-1}(U)$ is a local trivialization of $\mathcal{F}$. Under this trivialization, we write the section $\phi(x)=(x,u(x))\in U\times F$ and identify $\phi$ with $u$, $\nabla_A\phi$ with $\nabla_Au$ and $\mu(\phi)$ with $\mu(u)$, since their values are determined by $u$. With these notations, when the metric on $U$ is Euclidean and $u$ is regular enough, we can rewrite~\eqref{eq:EL-L-alpha-global} as (under Coulomb gauge)
 \begin{equation}\label{eq:EL-L-alpha-local}
   \begin{cases}
     \nabla_A^*\nabla_Au-  \frac{1}{\Upsilon}\left\langle d\Upsilon,\nabla_Au \right\rangle-\frac{1}{\Upsilon}\mu(u)\cdot\nabla\mu(u)=0,&x\in U\\
     \Delta A-\left\langle dA,A \right\rangle-\left\langle A,[A,A] \right\rangle+\Upsilon\left\langle \nabla_Au,u \right\rangle=0,&x\in U\\
     \nu\lh \nabla_A\phi\perp T_\phi\mathcal{K}^v,&x\in\partial^0U\\
     \nu\lh F_A=0,&x\in\partial^0U\\
     \nu\lh A=0,&x\in\partial U,
   \end{cases}
 \end{equation}
 where $\partial^0 U \mathpunct{:}=\partial\Sigma\cap U$, $\Upsilon=\alpha(1+\lvert \nabla_Au \rvert^2)^{\alpha-1}$, $\Delta A=d^*dA+dd^*A$ is the Laplace operator on 1-forms, and we use
 \[
   \nabla_A^*(f\nabla_A\phi)=-\left\langle df,\nabla_A\phi \right\rangle+f\nabla_A^*\nabla_A\phi.
 \]
 Note that, by definition $\nabla_Au=du+A\cdot u=(\partial_iu+A_i\cdot u)dx^i\mathpunct{:}=u_{|i}dx^i$, where $A_i\in \mathfrak{g}$, which acts on $u$ as follows
 \begin{equation}\label{eq:Au}
   A_i\cdot u \mathpunct{:}=\left. \frac{d}{dt} \right\rvert_{t=0}\exp(tA_i)\cdot u,
 \end{equation}
 here, $\exp$ is the exponential map of $G$. It is clear that $\nabla_Au$ is a tangent vector of $F$ at $u$, we will write $A_i\cdot u \mathpunct{:}=A_i^\sharp(u)$ be the \emph{fundamental vector field} corresponding to $A_i$ at $u$. Similarly, for a tangent vector fields $v\in\Gamma(u^*TF)$, we have $\nabla_Av=\nabla v+A\cdot v=(\nabla_{\partial_i}v+A_i\cdot v)dx^i$, where $\nabla_{\partial_i}$ is the pullback connection, and
 \begin{equation}\label{eq:Av}
   A_i\cdot v \mathpunct{:}=\left. \frac{\nabla}{dt} \right\rvert_{t=0}\left[ \left( d(\exp(tA_i)\cdot u) \right)(v) \right]=\nabla_v A_i^\sharp,
 \end{equation}
 where $\nabla$ is the Levi-Civita connection of $F$. Suppose $\mathfrak{g}=\Span{v_1,\ldots,v_m}$, and denote $V_\alpha$ the fundamental vector field generated by $v_\alpha$, then for $A_i(x)=a_i^\alpha(x)v_\alpha$, we have
 \[
   A_i\cdot u=A_i^\sharp(u)=a_i^\alpha(x)V_\alpha(u).
 \]

 Now, a direct computation shows that the local equation is given by
 \begin{equation}\label{eq:A-u-strong}
   \begin{cases}
     \Delta_\Sigma u-2(\alpha-1)\frac{\left\langle \nabla_A^2u,\nabla_Au \right\rangle \nabla_Au}{1+|\nabla_Au|^2}-\Phi_\alpha(A,u)=0,&x\in U\\
     \Delta A-\Psi_\alpha(A,u)=0,&x\in U\\
     \frac{\partial u}{\partial\nu}\perp T_uK,&x\in\partial^0 U\\
     A_2=0,&x\in\partial U\\
     \frac{\partial A_1}{\partial\nu}=0,&x\in\partial^0 U,
   \end{cases}
 \end{equation}
 where
 \begin{align}
   \Phi_\alpha(A,u)&=\Gamma(u)(du,du)+2A\cdot du+A\cdot A\cdot u+\frac{1}{\Upsilon}\mu(u)\cdot\nabla\mu(u),\label{eq:Phi}\\
   \intertext{$\Delta_\Sigma$ is the Laplace-Beltrami operator on functions over $\Sigma$, $\Delta=dd^*+d^*d$ is the Laplace operator of 1-forms, $\Gamma(u)$ is the second fundamental form of $F \hookrightarrow \mathbb{R}^l$, and}
   \Psi_\alpha(A,u)&=\left\langle dA,A \right\rangle+\left\langle A,[A,A] \right\rangle-\Upsilon\left\langle \nabla_Au,u \right\rangle\label{eq:Psi}.
 \end{align}
 The boundary condition is localized as follows: let $\left\{ e_1,e_2 \right\}$, $e_2|_{\partial\Sigma}=\nu$, be a moving frame near the boundary and $\left\{ \omega^1,\omega^2 \right\}$ be the dual frame. If we write $A=A_i\omega^i$, then
 \[
   \begin{cases}
     \nu\lh A=0,&x\in\partial U\\
     \nu\lh F_A=0,&x\in\partial^0 U
   \end{cases}\quad\xtext{implies}\quad
   \begin{cases}
     A_2=0,&x\in\partial U\\
     \frac{\partial A_1}{\partial\nu}=0,&x\in\partial^0 U.
   \end{cases}
 \]
 The boundary condition for the section $\phi$ is given by
 \[
   \nu\lh \nabla_A\phi\perp T_\phi\mathcal{K}^v,\quad x\in\partial^0 U
 \]
 which is equivalent to
 \[
   \frac{\partial u}{\partial \nu}\perp T_uK,\quad x\in\partial^0 U,
 \]
 since $\nu\lh A=0$ on $\partial^0U$.
 \begin{rmk}
   The boundary condition imposed on $\partial^0U$ in~\eqref{eq:A-u-strong} is empty if $U$ is an interior neighborhood. For the boundary neighborhood, the free-boundary is only prescribed at the flat part $\partial^0U$. We should remark also that $A_2=0$ is exactly the local Coulomb gauge boundary condition given by $\nu\lh A=0$ as in~\autoref{thm:coulomb}.
 \end{rmk}
 Before we get involved into the proof of~\autoref{thm:alpha-smoothness}, we state the reflecting technique as follows, which will be needed in the proof of boundary regularity. For simplicity, we will assume that the underlying metric on a local chart $U$ is flat in the following context. Recall that the metric of a two dimensional surface $\Sigma$ is locally conformal to the standard Euclidean metric, i.e., $g=e^{2v}g_0$, where $g_0$ is the Euclidean metric. Then, the YMH energy has the form
 \[\begin{aligned}
     \mathcal{L}_\alpha(A,u)&=\int\Big((1+|\nabla_Au|^2_g)^\alpha + |F_A|_g^2 +|\mu(u)|_h^2\Big) dv_g\\
                            &=\int\Big((1+e^{-2v}|\nabla_Au|^2_{g_0})^\alpha + e^{-4v}|F_A|_{g_0}^2 +|\mu(u)|_h^2\Big) e^{2v}dv_{g_0}\\
                            &=\int\Big( e^{-2(\alpha-1)v}(e^{2v}+|\nabla_Au|^2_{g_0})^\alpha + e^{-2v}|F_A|_{g_0}^2 +e^{2v}|\mu(u)|_h^2\Big)dv_{g_0},
 \end{aligned}\]
 thus the Euler-Lagrangian equation under the conformal metric $g_0$ is given by
 \[
   \begin{dcases}
     \nabla_A^*\Big(\alpha e^{-2(\alpha-1)v}(e^{2v}+|\nabla_Au|^2_{g_0})^{\alpha-1}\nabla_A u\Big)+e^{2v}\mu(u)\nabla\mu(u)=0,\\
     D_A^*\Big(e^{-2v}F_A\Big)+\alpha e^{-2(\alpha-1)v}(e^{2v}+|\nabla_Au|^2_{g_0})^{\alpha-1}\langle\nabla_A u, u\rangle=0.
   \end{dcases}
 \]
 Although the YMH field equation is not conformally invariant, it is clear from equation (\ref{eq:EL-L-alpha-global}) and (\ref{eq:A-u-strong}) that after a conformal change of the metric, the structure of the equation will not change, except that some additional lower order terms (which can be analytically well controlled) emerge. Therefore our arguments blow  still works for non-flat metrics. Especially, in the proofs of \autoref{thm:alpha-smoothness} and \autoref{thm:blowup}, we only consider the case that the metric is locally flat for simplicity and omit the general case with the conformal factor $v$.

 Now for $x_0\in\partial\Sigma$, without loss of generality, we assume the local trivialization chart $U$ of $x_0$ is an upper half disc $D_\rho$ centered at $x_0=0$ and the flat boundary is settled on $\partial\Sigma$. Moreover, since $u\in L_1^{2\alpha}(\Sigma,\mathbb{R}^l) \hookrightarrow C^0(\bar\Sigma,\mathbb{R}^l)$, we can take $\rho$ small enough such that the following reflection is well-defined. A more geometric way can be found in~\cite{Scheven2006Partial}*{Sect.~3}. For $p=u(x_0)\in K$, we choose Fermi coordinates $\left( f^1,\ldots,f^n \right)$ on an open neighborhood $V$ of $p$ in $F$, such that
 \begin{itemize}
   \item $V\cap K=\left\{ f^{k+1}=0, \ldots, f^n=0 \right\}$;
   \item For any fixed $q\in K$ and $a\in\left\{ k+1,\ldots, n \right\}$, the $f^a$-coordinate curve start from $q$ is a geodesic in $V\subset F$, which is perpendicular to $K$.
 \end{itemize}
 In order to keep the extension as smooth as possible, it turns out that the extension depends on the ``type'' of boundary condition. More precisely, for homogeneous Neumann boundary we use the even extension and for Dirichlet boundary, we use the odd extension. These two types of boundary conditions root in the free boundary condition, the $n-k$ Dirichlet conditions come from the fact that $u(\partial^0 U)\subset K$. The remaining $k$ boundary conditions come from the constraint in calculus of variation. To write down these boundary conditions in Fermi coordinates, we note first that
 \[
   \frac{\partial u}{\partial\nu}=-\frac{\partial u}{\partial x^2}=-\frac{\partial u^a}{\partial x^2}\frac{\partial}{\partial f^a},
 \]
 where $u^a \mathpunct{:}=f^a\circ u$. Then, the local boundary condition in~\eqref{eq:A-u-strong} of $u$ is given by
 \begin{equation}\label{eq:u-Neumann-bdry}
   \frac{\partial u^a}{\partial x^2}(x)=0,\quad x\in\partial^0U, \quad a\in\I_1 \mathpunct{:}=\left\{ 1,\ldots,k \right\}.
 \end{equation}
 The constraint $u(\partial^0 U)\subset K$ transforms to
 \begin{equation}\label{eq:u-Dirichlet-bdry}
   u^a(x)=0,\quad x\in\partial^0U,\quad a\in \I_2 \mathpunct{:}=\left\{ k+1,\ldots,n \right\}.
 \end{equation}

 Next, we extend various quantities from $D_\rho$ to $B_\rho$. Let us illustrate the basic idea by the extension of $u$. Suppose $x^*=(x^1,-x^2)\in D_\rho$ is the reflection of $x=(x^1,x^2)$ respect to $\partial^0D_\rho$ and $r(x)=x^*$ is the reflection map. The \emph{reflection} of $F$ with respect to $K$ is defined as follows,
 \begin{align*}
   \gamma \mathpunct{:}F|_V&\to F\\
   q=\varphi^{-1}(f^1,\ldots, f^n)&\mapsto q^*=\varphi^{-1}(f^1,\ldots, f^k,-f^{k+1},\ldots, -f^n),
 \end{align*}
 where $\varphi \mathpunct{:}F|_V\to \mathbb{R}^n$ is the coordinate map. The extension of $u$ is given by, for $x\in D_\rho^{-}\mathpunct{:}=B_\rho\setminus D_\rho$\footnote{In what follows, we always omit the trivial relation that the extended quantity restricting to $D_\rho$ equals to the original one for simplicity.},
 \[
   \tilde{u}=\gamma\circ u\circ r.
 \]
 In order to check the parity, we note that, for $x\in D_\rho^-$,
 \[
   \tilde{u}^a(x)=
   \begin{cases}
     u^a(x^*), & a\in\I_1\\
     -u^a(x^*),& a\in\I_2.
   \end{cases}
 \]
 By our boundary conditions \eqref{eq:u-Neumann-bdry} and \eqref{eq:u-Dirichlet-bdry} of $u$, it is easy to see, $\tilde{u}\in L_1^{2\alpha}(B_\rho,F)$ for any $u\in L_1^{2\alpha}(D_\rho,F)$.

 For a vector field $v(x)=v^a(x)\partial_{f^a}(u(x))$ along $u$, we define the extended vector filed along $\tilde{u}$ as, for $x\in D_\rho^-$,
 \[
   \tilde{v}\mathpunct{:}=\gamma_*\circ v\circ r.
 \]
 In particular, for $x\in D_\rho^-$,
 \[
   \widetilde{\partial_{f^a}(u)}=\gamma_*(\partial_{f^a}(u(x^*)))=(-1)^{j-1}\partial_{f^a}(\tilde{u}(x)),\quad a\in\I_j,
 \]
 and if we write $\tilde{v}(x)=\tilde{v}^a(x)\partial_{f^a}(\tilde{u}(x))$, then
 \[
   \tilde{v}^a(x)=(-1)^{j-1}v^a(x^*),\quad a\in\I_j.
 \]

 The metric $h$ is extended by $\tilde{h}=\gamma^*h$. It is easy to show, in the coordinates $(f^1,\ldots, f^n)$, for $x\in D_\rho^-$,
 \[
   \tilde{h}_{ab}(\tilde{u}(x))=
   \begin{cases}
     -h_{ab}(u(x^*)),&\xtext{if}(a,b)\in-\I\Lambda \mathpunct{:}=\I_1\times\I_2\bigcup\I_2\times\I_1\\
     h_{ab}(u(x^*)),&\xtext{otherwise}.
   \end{cases}
 \]
 The extended Christoffel symbol $\tilde{\Gamma}(\tilde{u})$\footnote{It is easy to check, for $(a,b,c)\in\I_1\times\I_1\times\I_1\cup\I_2\times\I_2\times\I_1\cup\I_2\times\I_1\times\I_2\cup\I_1\times\I_2\times\I_2$ and $x\in D_\rho^-$, $\tilde{\Gamma}_{ab}^c(\tilde{u}(x))=\Gamma_{ab}^c(u(x^*))$ and $\tilde{\Gamma}_{ab}^c(\tilde{u}(x))=-\Gamma_{ab}^c(u(x^*))$ otherwise.} is defined by the extended metric $\tilde{h}(\tilde{u})$.

 We extend the connection one form $A$ from $D_\rho$ to the whole disc $B_\rho$ \emph{evenly}, i.e., we define $\tilde{A}$ by the following relation,
 \[
   r^*\tilde{A}=\tilde{A}.
 \]
 If we write $A$ as a $\mathfrak{g}$-valued 1-form $A_i(x)dx^i$ locally, then for $x\in D_\rho^-$,
 \[
   \tilde{A}_1(x)=A_1(x^*),\quad \tilde{A}_2(x)=-A_2(x^*).
 \]
 Locally, let $\mathfrak{g}=\mathrm{span}\left\{ v_1,\ldots, v_m \right\}$, the fundamental vector field generated by $v_\alpha$ is denoted by $V_\alpha=\lambda_\alpha^a\partial_{f^a}$. If we extend $V_\alpha$ by, for $x\in D_\rho^-$,
 \[
   \widetilde{V}_\alpha(\tilde{u})\mathpunct{:}=\tilde{\lambda}_\alpha^a(\tilde{u})\partial_{f^a}(\tilde{u})=\gamma_*\circ V_\alpha(u)\circ r,
 \]
 then
 \begin{equation}\label{eq:lambda}
   \tilde{\lambda}_\alpha^a(\tilde{u}(x))=(-1)^{j-1}\lambda_\alpha^a(u(x^*)),\quad a\in \I_j.
 \end{equation}
 If we write $A_i(x)=a_i^\alpha(x)v_\alpha$ and $\tilde{A}_i(x)=\tilde{a}_i^\alpha(x)v_\alpha$, then for $x\in D_\rho^-$,
 \begin{equation}\label{eq:a-i-alpha}
   \tilde{a}_i^\alpha(x)=(-1)^{i-1}a_i^\alpha(x^*).
 \end{equation}
 Note that the local boundary condition of $A$ is given by, for any $\alpha=1,2,\ldots,m$ and any $x\in\partial^0U$,
 \[
   \frac{\partial a_1^\alpha}{\partial x^2}(x)=0,\quad a_2^\alpha(x)=0,
 \]
 which clearly implies that $\tilde A\in L_1^2(B, \Omega^1(\mathfrak{g}))$ for any $A\in L_1^2(B, \Omega^1(\mathfrak{g}))$.

 Since we expect $\widetilde{\nabla}_{\tilde A}\tilde u$ and $\widetilde{\nabla}_{\tilde{A}}\tilde{v}$ are vector fields along $\tilde u$, we extend them, respectively, as follows, for $x\in D_\rho^-$,
 \[
   \widetilde{\nabla}_{\tilde{A}}\tilde{u}(x)=\widetilde{\nabla_Au}(x)=\gamma_*(\nabla_Au(x^*)), \quad
   \widetilde{\nabla}_{\tilde{A}}\tilde{v}(x)=\widetilde{\nabla_{A}v}(x)=\gamma_*(\nabla_Av(x^*)).
 \]
 Locally, if we write
 \begin{align*}
   \widetilde{\nabla}_{\tilde{A}}\tilde{u}(x)&=\left( \widetilde{\nabla}_{\partial_i}\tilde{u}(x)+\tilde{A}_i\tilde{\cdot}\tilde{u}(x) \right)\otimes dx^i\\
   \widetilde{\nabla}_{\tilde{A}}\tilde{v}(x)&=\left( \widetilde{\nabla}_{\partial_i}\tilde{v}(x)+\tilde{A}_i\tilde{\cdot}\tilde{v}(x) \right)\otimes dx^i,
 \end{align*}
 where $\widetilde{\nabla}_{\partial_i}$ is the pullback connection along $\tilde{u}$, then the above extension requires that $\tilde{A}_i\tilde{\cdot}\tilde{u}$ and $\tilde{A}_i\tilde{\cdot}\tilde{v}$ satisfy the following relations, respectively, for $x\in D_\rho^-$:
 \begin{align}
   \tilde{A}_i\tilde{\cdot}\tilde{u}(x)&=(-1)^{i-1}\widetilde{A_i\cdot u}(x)=(-1)^{i-1}\gamma_*\left( A_i\cdot u(x^*) \right),\label{eq:Aiu}\\
   \tilde{A}_i\tilde{\cdot}\tilde{v}(x)&=(-1)^{i-1}\widetilde{A_i\cdot v}(x)=(-1)^{i-1}\gamma_*\left( A_i\cdot v(x^*) \right)\label{eq:Aiv}.
 \end{align}
 Recall that
 \[
   A_i\cdot u=A_i^\sharp(u)=a_i^\alpha(x)V_\alpha(u)=a_i^\alpha(x)\lambda_\alpha^a(u)\partial_{f^a}(u)=\mathpunct{:}A_i^a(u)\partial_{f^a}(u),
 \]
 If we write
 \[
   \tilde{A}_i\tilde{\cdot}\tilde{u}
   \mathpunct{:}=\tilde{a}_i^\alpha(x)\tilde{\lambda}_\alpha^a(\tilde{u})\partial_{f^a}(\tilde{u})
   =\mathpunct{:}\tilde{A}_i^a(\tilde{u})\partial_{f^{a}}(\tilde{u}),
 \]
 where $\tilde{\lambda}_\alpha^a(\tilde{u})=(-1)^{j-1}\lambda_\alpha^a(u\circ r)$ for $x\in D_\rho^-$ and $a\in \I_j$ (by \eqref{eq:lambda}), then by \eqref{eq:Aiu}, for $x\in D_\rho^-$,
 \[
   \tilde{A}_i^a(\tilde{u}(x))=(-1)^{i+j}A_i^a(u(x^*)), \quad a\in\I_j.
 \]
 To simplify the notation further, let $\tilde{u}^a_{|i}(x)\mathpunct{:}=\partial_i\tilde{u}^a(x)+\tilde{A}_i^a(\tilde{u}(x))$, then for $x\in D_\rho^-$,
 \[
   \tilde{u}^a_{|i}(x)=(-1)^{i+j}u^a_{|i}(x^*),\quad a\in\I_j,
 \]
 where
 \begin{equation}\label{eq:nabla-A-u}
   u^a_{|i}(x^*)\mathpunct{:}=\partial_iu^a(x^*)+A_i^a(u(x^*))\text{ and } \nabla_Au(x^*)=u_{|i}^a(x^*)\partial_{f^a}(u(x^*))\otimes dx^i.
 \end{equation}

 In order to show the local expression of $\widetilde{\nabla}_{\tilde{A}}\tilde{v}$, recall that
 \[
   \widetilde{\nabla}_{\tilde{A}}\tilde{v}(x)=\left( \widetilde{\nabla}_{\partial_i}\tilde{v}(x)+\tilde{A}_i\tilde{\cdot}\tilde{v}(x) \right)\otimes dx^i,
 \]
 where $\widetilde{\nabla}_{\partial_i}$ is the pullback connection along $\tilde{u}$. More precisely,
 \begin{align*}
   \widetilde{\nabla}_{\partial_i}\tilde{v}(x)&=\widetilde{\nabla}_{\partial_i}\left( \tilde{v}^a(x)\partial_{f^a}(\tilde{u}(x)) \right)\\
                                              &=\partial_i\tilde{v}^a(x)\partial_{f^a}(\tilde{u}(x))+\tilde{v}^a(x)\widetilde{\nabla}_{\tilde{u}_*\partial_i}\partial_{f^a}(\tilde{u}(x))\\
                                              &=\left( \partial_i\tilde{v}^c(x)+\tilde{v}^a(x)\partial_i\tilde{u}^b(x)\widetilde{\Gamma}_{ab}^c(\tilde{u}(x)) \right)\partial_{f^c}(\tilde{u}(x))\\
                                              &=(-1)^{i-1}\widetilde{\nabla_{\partial_i}v}(x),
 \end{align*}
 where
 \begin{equation}\label{eq:nabla-partial_i-v}
   \nabla_{\partial_i}v(x)=\left( \partial_i v^c(x)+ v^a(x)\partial_i u^b(x)\Gamma_{ab}^c( u(x)) \right)\partial_{f^c}( u(x)).
 \end{equation}
 In order to write down $\tilde{A}_i\tilde{\cdot}\tilde{v}(x)=\widetilde{\nabla}_{\tilde{v}}\left( \tilde{A}_i\tilde{\cdot}\tilde{u} \right)(x)$ locally, we adopt the following notation
 \[
   \tilde{A}_i\tilde{\cdot}\partial_{f^a}(\tilde{u})(x)\mathpunct{:}=\tilde{A}_{ai}^b(\tilde{u}(x))\partial_{f^b}(\tilde{u}(x)),
 \]
 then $\tilde A_i\tilde\cdot\tilde v(x)=\tilde v^a(x)\tilde A_{ai}^b(\tilde u(x))\partial_{f^b}(\tilde u(x))$. By \eqref{eq:Aiv}, we know that
 \[
   \tilde A_i\tilde{\cdot}\partial_{f^a}(\tilde u)(x)=(-1)^{i+j}\widetilde{\left( A_i\cdot\partial_{f^a}(u) \right)}(x),\quad a\in\I_j,
 \]
 which implies, for $a\in\I_j,b\in\I_k$ and $x\in D_\rho^-$,
 \[
   \tilde{A}_{ai}^b(\tilde{u}(x))=(-1)^{i+j+k-1}A_{ai}^b(u(x^*)),
 \]
 where
 \begin{equation}\label{eq:A-v}
   \begin{split}
     A_{ai}^b(u(x))&=\partial_{f^a}A_i^b(u(x))+A_i^c(u(x))\Gamma_{ac}^b(u(x))\\
                   &=a_i^\alpha(x)\left( \partial_{f^a}\lambda_\alpha^b(u(x))+ \lambda_\alpha^c(u(x))\Gamma_{ac}^b(u(x)) \right).
   \end{split}
 \end{equation}

 The extension of $\mu$ is given by, for $x\in D_\rho^-$,
 \[
   \tilde{\mu}(\tilde{u}(x))=\mu(u(x^*)).
 \]
 It is easy to show, if we define
 \[
   \widetilde{\nabla}_{\partial f^b}\tilde{\mu}(\tilde{u}(x)) \mathpunct{:}=\widetilde{\left( \nabla_{\partial_{f^b}}\mu(u(x)) \right)}=\gamma_*\left( \left( \nabla_{\partial_{f^b}}\mu \right)(u(x^*)) \right)
 \]
 then, for $b\in\I_j$ and $x\in D_\rho^-$,
 \[
   \widetilde{\nabla}_{\partial_{f^b}}\tilde{\mu}(\tilde{u}(x))=(-1)^{j-1}\nabla_{\partial_{f^b}}\mu(u(x^*)).
 \]
 We should remark that, if we view $\tilde{h}$, $\tilde{\Gamma}$ and $\tilde{\mu}$ as functions of $\tilde{u}$, then they maybe multi-valued, but they are still single-valued when restrict to $\tilde{u}(B_\rho)$ for $\rho$ is small enough, and we can apply \autoref{thm:Morrey} to improve the regularity.

 The following lemma asserts that under the above extension, $(\tilde{A},\tilde{u})$ solves weakly an equation that is similar to~\eqref{eq:EL-L-alpha-local}.
 \begin{lem}\label{lem:bdry-reg}
   Suppose $(\tilde{A},\tilde{u})$ is the extension of $(A,u)$ as above, where $(A,u)\in L_1^2(D_\rho,\Omega^1(\mathfrak{g}))\times L_1^{2\alpha}(D_\rho,F)$ solves~\eqref{eq:EL-L-alpha-local} weakly in $D_\rho$. Then $(\tilde{A},\tilde{u})\in L_1^2(B_\rho,\Omega^1(\mathfrak{g}))\times L_1^{2\alpha}(B_\rho,F)$, and for all $(\tilde{B},\tilde{v})\in C_0^\infty\bigl(B_\rho,\Omega^1(\mathfrak{g})\bigr)\times C_0^\infty(B_\rho,\tilde{u}^*(TF))$, there holds
   \begin{equation}\label{eq:weak-Au-extend}
     \begin{dcases}
       \int_{B_\rho}\alpha(1+|\widetilde{\nabla}_{\tilde{A}}\tilde{u}|^2_{\tilde{h}})^{\alpha-1}\left\langle \widetilde{\nabla}_{\tilde{A}}\tilde{u},\widetilde{\nabla}_{\tilde{A}}\tilde{v} \right\rangle+\left\langle \tilde{\mu}(\tilde{u}),\widetilde{\nabla}\tilde{\mu}(\tilde{u})\tilde{v} \right\rangle=0,\\
       \int_{B_\rho}\alpha(1+|\widetilde{\nabla}_{\tilde{A}}\tilde{u}|^2_{\tilde{h}})^{\alpha-1}\left\langle \widetilde{\nabla}_{\tilde{A}}\tilde{u},\tilde{B}\tilde{\cdot}\tilde{u} \right\rangle+\left\langle F_{\tilde{A}},\widetilde{D}_{\tilde{A}}\tilde{B} \right\rangle=0.
     \end{dcases}
   \end{equation}
 \end{lem}
 \begin{proof}
   The weak form of~\eqref{eq:EL-L-alpha-local} is given by, for any $(B,v)\in L_1^2\bigcap C^0(D_{\rho},\Omega^1(\mathfrak{g}))\times L_1^{2\alpha}(D_{\rho},u^*(TF))$ and $v|_{\partial^0 D_{\rho}}\in T_uK$,
   \[
     0=\int_{D_{\rho}}\alpha(1+|\nabla_Au|^2)^{\alpha-1}\left\langle \nabla_Au,\nabla_Av+B\cdot u \right\rangle+\left\langle F_A,D_AB \right\rangle+\left\langle \mu(u),\nabla\mu(u)v \right\rangle.
   \]
   That is
   \begin{equation}\label{eq:weak-alpha}
     \begin{dcases}
       \int_{D_\rho}\alpha(1+|\nabla_Au|^2)^{\alpha-1}\left\langle \nabla_Au,\nabla_Av \right\rangle + \left\langle \mu(u),\nabla\mu(u)v \right\rangle=0,\\
       \int_{D_\rho} \alpha(1+|\nabla_Au|^2)^{\alpha-1}\left\langle \nabla_Au,B\cdot u \right\rangle+\left\langle F_A,D_AB \right\rangle=0.
     \end{dcases}
   \end{equation}

   Let us write down~\eqref{eq:weak-alpha} exactly in local Fermi coordinates. Note that the test functions $B$ and $v$ are vector valued, and we will test each component. For $v(x)=v^b(x)\partial_{f^b}(u(x))\in\Gamma(u^*TF)$, by \eqref{eq:nabla-A-u}, \eqref{eq:nabla-partial_i-v} and \eqref{eq:A-v},
   \[
     \left\langle \nabla_Au,\nabla_Av \right\rangle=h_{ac}u^a_{|i}\left( \partial_i v^c(x)+ v^b(x)\partial_i u^d(x)\Gamma_{bd}^c( u(x)) +v^b(x)A_{bi}^c(u(x))\right),
   \]
   where $u^a_{|i} \mathpunct{:}=\partial_iu^a+A_i^a(u)=\partial_iu^a+a_i^\beta \lambda_\beta^a(u)$. For any fix $b$, if we take $v^b(x)=\varphi(x)$, where $\varphi\in C^\infty(D_\rho)$ for $b\in\I_1$ and $\varphi\in C_0^\infty(D_\rho)$ for $b\in\I_2$, then by the first equation of~\eqref{eq:weak-alpha},
   \begin{equation}\label{eq:weak-u}
     0=\begin{multlined}[t]
       \int_{D_\rho}\alpha\left( 1+|\nabla_Au|^2 \right)^{\alpha-1}\big( h_{ab}(u)\partial_i\varphi u^a_{|i} + h_{ad}(u)\varphi u^a_{|i}\partial_iu^c\Gamma_{bc}^d(u) \\
       + h_{ad}(u)\varphi u^a_{|i} A_{bi}^d \big) + \varphi\mu(u)\cdot[\nabla_{\partial_{f^b}}\mu](u).
     \end{multlined}
   \end{equation}

   By the definition of induced connection,
   \[
     D_AB=dB+[A\wedge B]=(-\partial_jB_i+[A_i,B_j])dx^i\wedge dx^j.
   \]
   Recall also that
   \[
     F_A=dA+A\wedge A=(\partial_iA_j+A_iA_j)dx^i\wedge dx^j \mathpunct{:}=A_{j;i}dx^i\wedge dx^j,\quad
     F_{ij}=\frac{1}{2}\left( A_{j;i}-A_{i;j} \right),
   \]
   In order to write down the above equations locally, suppose $B_i(x)=b_i^\alpha(x) v_\alpha$, $\left\langle v_\alpha,v_\beta \right\rangle=\delta_{\alpha\beta}$, $\partial_iv_\alpha=0$ and $[v_\alpha,v_\beta]=g_{\alpha\beta}^\gamma v_\gamma$, then
   \begin{align*}
     2F_A&=2F_{ij}dx^i\wedge dx^j=(\partial_iA_j-\partial_jA_i+[A_i,A_j])dx^i\wedge dx^j\\
      &=\left( (\partial_ia_j^\gamma-\partial_ja_i^\gamma)+2a_i^\alpha a_j^\beta g_{\alpha\beta}^\gamma \right)v_\gamma dx^i\wedge dx^j
      \mathpunct{:}=2F_{ij}^\gamma v_\gamma dx^i\wedge dx^j,\\
     D_AB&=(\partial_iB_j+[A_i,B_j])dx^i\wedge dx^j
     =\left( \partial_i b_j^\gamma+a_i^\alpha b_j^\beta g_{\alpha\beta}^\gamma \right)v_\gamma dx^i\wedge dx^j,\\
     \left\langle F_A,D_AB \right\rangle&=\left( \partial_i b_j^\gamma+a_i^\alpha b_j^\beta g_{\alpha\beta}^\gamma \right)F_{ij}^\gamma.
   \end{align*}
   Note also that $A_i^\sharp(u(x))=a_i^\alpha(x)V_\alpha(u(x))$ and $B_i\cdot u=B_i^\sharp(u)=b_i^\alpha V_\alpha(u)=b_i^\alpha(x)\lambda_\alpha^a(u)\partial_{f^a}(u)$, thus we can write $\left\langle \nabla_Au,B\cdot u \right\rangle$ locally as follows:
   \[
     \left\langle \nabla_Au,B\cdot u \right\rangle=h_{ab}(u)u^a_{|i}b_i^\beta(x)\lambda_\beta^b(u),
   \]
   Therefore, for any fixed $j,\beta$, if we take $b_j^\beta=\vartheta$, such that
   \[
     \vartheta\in
     \begin{dcases}
       C^\infty(D_\rho),  &j=1,\\
       C_0^\infty(D_\rho),&j=2.
     \end{dcases}
   \]
   then by the second equation of~\eqref{eq:weak-alpha},
   \begin{equation}\label{eq:weak-A}
     0=\int_{D_\rho}\alpha\left(1+|\nabla_Au|^2\right)^{\alpha-1}\vartheta h_{ab}(u) u^a_{|j}\lambda_\beta^b(u)+F_{ij}^\beta\partial_i\vartheta+F_{ij}^\gamma a_i^\delta g_{\beta\delta}^\gamma\vartheta.
   \end{equation}

   The next step is to show that, if we extend $h_{ab}$, $\Gamma_{ab}^c, A, u, \mu$ as before, and use prime to distinguish the equations obtained by replacing all quantities in \eqref{eq:weak-u} and \eqref{eq:weak-A} with their extensions, then $\tilde{u}$ and $\tilde{A}$ satisfy \eqref{eq:weak-u}$'$ and \eqref{eq:weak-A}$'$  respectively. Clearly,~\eqref{eq:weak-u}$'$ holds for $(\tilde{A},\tilde{u})$ and any  $\varphi\in C_0^\infty(B_\rho)$ when $b\in\I_1$ and any $\varphi\in C_0^\infty(D_\rho)$ with $\varphi\equiv0$ on $\partial^0D_\rho$ when $b\in\I_2$. We only need to check that when $b\in\I_2$,~\eqref{eq:weak-u}$'$ holds for any $\varphi\in C_0^\infty(B_\rho)$. Write
   \[
     \varphi=\varphi_e+\varphi_o,\quad
     \varphi_e(x)\mathpunct{:}=\frac{1}{2}\left( \varphi(x)+\varphi(x^*) \right),\quad
     \varphi_o(x)\mathpunct{:}=\frac{1}{2}\left( \varphi(x)-\varphi(x^*) \right),
   \]
   then clearly, for $x\in D_\rho^-$,
   \[
     \partial_i\varphi_o(x)=(-1)^i\partial_i\varphi_o(x^*),\quad \partial_i\varphi_e(x)=(-1)^{i-1}\partial_i\varphi_e(x^*).
   \]
   Note that for $b\in\I_2$, it is easy to check the parity of the components in~\eqref{eq:weak-u}$'$, for any $x\in D_\rho^-$,
   \begin{align*}
     \tilde{h}_{ab}(\tilde{u})\tilde{u}^a_{|i}|_{x}&=(-1)^ih_{ab}(u)u^a_{|i}|_{x^*}\\
     \tilde{h}_{ad}(\tilde{u})\tilde{u}^a_{|i}\partial_i\tilde{u}^c\tilde{\Gamma}_{bc}^d(\tilde{u})|_x& =-h_{ad}(u)u^a_{|i}\partial_i u^c \Gamma_{bc}^d(u)|_{x^*}\\
     \tilde{h}_{ad}(\tilde{u})\tilde{u}^a_{|i}\tilde{A}_{bi}^d(\tilde{u})|_x&=-h_{ad}(u)u^a_{|i} A_{bi}^d(u)|_{x^*}.
   \end{align*}
   Now, clearly, $|\widetilde{\nabla}_{\tilde{A}}\tilde{u}|^2_{\tilde{h}}(x)=|\nabla_Au|^2_h(x^*)$, for any $x^*\in D_\rho$. We can compute the extended weak equation ~\eqref{eq:weak-u}$'$ as
   \begin{align*}
    &\begin{multlined}
      \int_{B_\rho}\alpha(1+|\widetilde{\nabla}_{\tilde{A}}\tilde{u}|^2_{\tilde{h}})^{\alpha-1}\big( \tilde{h}_{ab}(\tilde{u})\partial_i\varphi\tilde{u}^a_{|i}+\tilde{h}_{ad}(\tilde{u})\varphi\tilde{u}^a_{|i}\partial_i\tilde{u}^c\tilde{\Gamma}_{cb}^d(\tilde{u})\\
      +\tilde{h}_{ad}(\tilde{u})\varphi \tilde{u}^a_{|i}\tilde{A}_{bi}^d\big)+\varphi\tilde{\mu}(\tilde{u})\cdot[\widetilde{\nabla}_{\partial _{f^b}}\tilde{\mu}](\tilde{u})
    \end{multlined}\\
    &=\begin{multlined}[t]
      \int_{B_\rho}\alpha(1+|\widetilde{\nabla}_{\tilde{A}}\tilde{u}|^2_{\tilde{h}})^{\alpha-1}\big( \tilde{h}_{ab}(\tilde{u})\partial_i(\varphi_e+\varphi_o)\tilde{u}^a_{|i}+\tilde{h}_{ad}(\tilde{u})(\varphi_e+\varphi_o)\tilde{u}^a_{|i}\partial_i\tilde{u}^c\tilde{\Gamma}_{cb}^d(\tilde{u})\\
      +\tilde{h}_{ad}(\tilde{u})(\varphi_e+\varphi_o) \tilde{u}^a_{|i}\tilde{A}_{bi}^d\big)+(\varphi_e+\varphi_o)\tilde{\mu}(\tilde{u})\cdot[\widetilde{\nabla}_{\partial_{f^b}}\tilde{\mu}](\tilde{u})
    \end{multlined}\\
    &=\begin{multlined}[t]
      \int_{B_\rho}\alpha(1+|\widetilde{\nabla}_{\tilde{A}}\tilde{u}|^2_{\tilde{h}})^{\alpha-1}\big( \tilde{h}_{ab}(\tilde{u})\partial_i\varphi_o\tilde{u}^a_{|i}+\tilde{h}_{ad}(\tilde{u})\varphi_o\tilde{u}^a_{|i}\partial_i\tilde{u}^c\tilde{\Gamma}_{cb}^d(\tilde{u})\\
      +\tilde{h}_{ad}(\tilde{u})\varphi_o \tilde{u}^a_{|i}\tilde{A}_{bi}^d\big)+\varphi_o\tilde{\mu}(\tilde{u})\cdot[\widetilde{\nabla}_{\partial_{f^b}}\tilde{\mu}](\tilde{u})
    \end{multlined}\\
    &=\begin{multlined}[t]
      2\int_{D_\rho}\alpha(1+|\nabla_{A} u|^2_{h})^{\alpha-1}\big(h_{ab}(u)\partial_i\varphi_ou^a_{|i}+ h_{ad}(u)\varphi_ou^a_{|i}\partial_i u^c\Gamma_{cb}^d(u)\\
      + h_{ad}(u)\varphi_o u^a_{|i} A_{bi}^d\big)+\varphi_o\mu(u)\cdot[\nabla_{\partial_{f^b}}\mu](u)
    \end{multlined}\\
    &=0,
   \end{align*}
   the last equality follows from~\eqref{eq:weak-u} and the fact that $\varphi_o=0$ on $\partial^0D_\rho$. This shows that the extended solution $(\tilde{A},\tilde{u})$ solves~\eqref{eq:weak-u}$'$ weakly.

   Lastly, we verify~\eqref{eq:weak-A}$'$ for extended $(\tilde{A},\tilde{u})$. Actually, we only need the following symmetries for $j=2$. By the symmetry of $a_i^\alpha$ (see \eqref{eq:a-i-alpha}),
   \[
     \tilde{F}_{ij}^\gamma(x)=\frac{1}{2}\left( \partial_{x^i}\tilde{a}_j^\gamma(x)-\partial_{x^j}\tilde{a}_i^\gamma(x)+2\tilde{a}_i^\alpha(x)\tilde{a}_j^\beta(x)g_{\alpha\beta}^\gamma  \right)
     =(-1)^{i+j}F_{ij}^\gamma(x^*).
   \]
   It is easy to check, the following symmetries
   \begin{align*}
     \tilde{F}_{ij}^\gamma(x)\tilde{a}_i^\delta (x)&=(-1)^{j-1}F_{ij}^\gamma(x^*)a_i^\delta(x^*),\\
     \tilde{h}_{ab}(\tilde{u})\tilde{u}^a_{|j}\tilde{\lambda}_\beta^b(\tilde{u})&=(-1)^{j-1} h_{ab}(u(x^*))u^a_{|j}(x^*) \lambda_\beta^b(u(x^*)).
   \end{align*}
   With these parities in hand, we can decompose $\vartheta$ into even part and odd part as $\varphi$ and the verification of~\eqref{eq:weak-A}$'$ is the same.
 \end{proof}
 Now, we are ready to prove \autoref{thm:alpha-smoothness}. We first rewrite the extended weak equation to standard form, and then check the condition~\eqref{eq:coupled-condi} is satisfied. \autoref{thm:Morrey} shows that the weak solution is strong, and we can bootstrap the regularity of the strong solution to show the smoothness up to the boundary.
 \begin{proof}[Proof of~\autoref{thm:alpha-smoothness}]
   By \autoref{lem:ps}, for $\alpha>1$, there exists a weak solution $(A_\alpha,\phi_\alpha)\in \mathscr{A}_1^2\times \mathscr{S}_{1,K}^{2\alpha}$ of \eqref{eq:EL-L-alpha-global}. We will improve the regularity of this weak solution module gauge and prove \autoref{thm:alpha-smoothness}.
   \step We first show the $L_2^2$-interior regularity, which is a direct application of \autoref{thm:Morrey}. Locally, if we write the solution as $(A,u)$, where $A$ is a $\mathfrak{g}$-valued 1-form on $U=B_\rho$ and $u$ is a map from $U$ to $F$, then the equation of $(A,u)$ is given by \eqref{eq:weak-u} and \eqref{eq:weak-A}. That is, for $(\vartheta,\varphi)\in C_0^\infty(B_\rho)$ (note that $C_0^\infty(B_\rho)$ is dense in $L_1^2\cap C^0(B_\rho)\times L_1^{2\alpha}(B_\rho)$),
   \begin{equation}\label{eq:weak-A-u}
     \begin{dcases}
       0=\int_{B_\rho} \left\{ \partial_i\vartheta_j^\beta F_{ij}^\beta+\vartheta_j^\beta\left( \Upsilon h_{ab}(u)u^a_{|j}\lambda_\beta^b(u)+F_{ij}^\gamma a_i^\delta g_{\beta\delta}^\gamma \right) \right\}\\
       0=\begin{multlined}[t]\int_{B_\rho}\partial_i\varphi^b\cdot\Upsilon h_{ab}(u) u^a_{|i}\\
         +\int_{B_\rho}\varphi^b\Bigl( \Upsilon h_{ad}(u) u^a_{|i}\left( \partial_iu^c\Gamma_{cb}^d(u)+A_{bi}^d(u) \right)+\mu(u)\cdot[\nabla_{\partial_{f^b}}\mu](u)\Bigr),
       \end{multlined}
     \end{dcases}
   \end{equation}
   where $\Upsilon=\alpha(1+\lvert \nabla_Au \rvert^2)^{\alpha-1}$. It is well-known in Yang--Mills theory that $D_A^*F_A=0$ is not a strict elliptic equation of $A$, we need to module the gauge action. Applying \autoref{thm:coulomb} to $A$ on $B_\rho$, we can assume further that $A$ is in Coulomb gauge. Since under local Coulomb gauge, there holds, for all $\beta=1,2,\ldots,m$, $\sum_{i=1}^2\partial_ia_i^\beta(x)=0$. Integration by parts shows
   \begin{align*}
     2\int_{B_\rho}\partial_i\vartheta_j^\beta F_{ij}^\beta
     &=\int_{B_\rho}\partial_i\vartheta_j^\beta\left( \partial_ia_j^\beta-\partial_ja_i^\beta+2a_i^\gamma a_j^\delta g_{\gamma\delta}^\beta \right)\\
     &=\int_{B_\rho}\partial_i\vartheta_j^\beta \partial_ia_j^\beta-2\vartheta_j^\beta a_i^\gamma \partial_ia_j^\delta g_{\gamma\delta}^\beta .
   \end{align*}
   Thus, under Coulomb gauge, \eqref{eq:weak-A-u} transforms to
   \begin{equation}\label{eq:weak-A-u-Coulomb}
     \begin{dcases}
       0=\int_{B_\rho} \left\{ \partial_i\vartheta_j^\beta \partial_ia_j^\beta+2\vartheta_j^\beta\left( -a_i^\gamma\partial_ia_j^\delta g_{\gamma\delta}^\beta+\Upsilon h_{ab}(u)u^a_{|j}\lambda_\beta^b(u)+F_{ij}^\gamma a_i^\delta g_{\beta\delta}^\gamma \right) \right\}\\
       0=\begin{multlined}[t]\int_{B_\rho}\partial_i\varphi^b\cdot\Upsilon h_{ab}(u) u^a_{|i}\\
         +\int_{B_\rho}\varphi^b\Bigl( \Upsilon h_{ad}(u) u^a_{|i}\left( \partial_iu^c\Gamma_{cb}^d(u)+A_{bi}^d(u) \right)+\mu(u)\cdot[\nabla_{\partial_{f^b}}\mu](u)\Bigr),
       \end{multlined}
     \end{dcases}
   \end{equation}
   where $\Upsilon=\alpha\left(1+h_{ab}(u)(\partial_iu^a+a_i^\beta\lambda_\beta^a(u))(\partial_iu^b+a_i^\gamma\lambda_\gamma^b(u))\right)^{\alpha-1}$, $u_{|i}^a=\partial_iu^a+a_i^\beta \lambda_\beta^a(u)$, $F_{ij}^\gamma=(\partial_ia_j^\gamma-\partial_ja_i^\gamma)+2a_i^\beta a_j^\delta g_{\beta\delta}^\gamma$, $A_{bi}^d(u)=a_i^\beta\left(\partial_{f^b}\lambda_\beta^d(u)+\lambda_\beta^c(u)\Gamma_{bc}^d(u)\right)$, and $h$, $\Gamma$, $\lambda$, $\mu$ are smooth functions of $u$. By definition $[v_\beta,v_\gamma]=g_{\beta\gamma}^\delta v_\delta$, $\left\{ g_{\beta\gamma}^\delta \right\}$ are called the structure constants of the Lie algebra.

   To apply~\autoref{thm:Morrey}, let $\Omega=B_\rho$ and
   \begin{align*}
     k_1&=2,\quad k_2=2\alpha,\\
     z &=(z_1,z_2)=(A,u),\quad z_{1,i}^\beta=a_i^\beta,\quad z_2^b=u^b\\
     \xi&=(\xi_1,\xi_2)=(\vartheta,\varphi),\quad\xi_{1,j}^\beta=\vartheta_j^\beta,\quad\xi_2^b=\varphi^b\\
     p &=(p_1,p_2)=(\nabla A,\nabla u),\quad p_{1,i;j}^\beta=\partial_ja_i^\beta,\quad p_{2;j}^b=\partial_ju^b,
   \end{align*}
   then the equation~\eqref{eq:weak-A-u-Coulomb} is of form~\eqref{eq:coupled-weak} with coefficients
   \begin{align*}
     q(x,z,p)  &=(q_1(x,z,p), q_2(x,z,p))\\
     q_1(x,z,p)&=p_1\\
     q_2(x,z,p)&=\alpha(1+h(z_2)(p_2+z_1\lambda(z_2))^2)^{\alpha-1}h(z_2)(p_2+z_1\lambda(z_2))\\
     w(x,z,p)  &=(w_1(x,z,p),w_2(x,z,p))\\
     w_1(x,z,p)&=\alpha(1+h(z_2)(p_2+z_1\lambda(z_2))^2)^{\alpha-1}h(z_2)\lambda(z_2)(p_2+z_1\lambda(z_2))+(p_1+z_1^2)z_1\\
     w_2(x,z,p)&=\alpha(1+h(z_2)(p_2+z_1\lambda(z_2))^2)^{\alpha-1}h(z_2)(p_2+z_1\lambda(z_2))\\
               &\qquad\cdot\left(p_2\Gamma(z_2)+z_1\left( \nabla\lambda(z_2)+\lambda(z_2)\Gamma(z_2) \right)\right)+\mu(z_2)\cdot\nabla\mu(z_2),
   \end{align*}
   where $h$, $\Gamma$, $\lambda$ and $\mu$ are smooth functions of $z_2$ and $h$ is positive definite, which is bounded from above and below. The verification of condition~\eqref{eq:coupled-condi} is tedious but straightforward. We illustrate by the computation of $w_z$. A direct computation shows that for some $\Lambda(R)$ depending on $\alpha$, the geometry of $F$, $G$ and $\mu$,
   \begin{alignat*}{2}
     |\partial_{z_1}w_1| &\leq \Lambda(R)\left(V_1^{k_1-1}+V_2^{k_2-2}\right),
                         &\quad
     |\partial_{z_2}w_1| &\leq \Lambda(R)V_2^{k_2-1},\\
     |\partial_{z_1}w_2| &\leq \Lambda(R)V_2^{k_2-1},
                         &\quad
     |\partial_{z_2}w_2| &\leq \Lambda(R)V_2^{k_2},
   \end{alignat*}
   where $R$ is the upper bound for $x$ and $z$, i.e., $\lvert x \rvert^2+\lvert z \rvert^2\leq R^2$.  It is clear that, for any vector $\pi=(\pi_1,\pi_2)$, we have
   \begin{align*}
     \pi\cdot\frac{\partial(w_1,w_2)}{\partial(z_1,z_2)}\cdot\pi^T&=\sum_{a,b}\pi_a\cdot\partial_{z_a}w_b\cdot\pi_b^T\\
                                                                  &\leq \Lambda(R)\left( \left(V_1^{k_1-1}+V_2^{k_2-2}\right) \lvert \pi_1 \rvert^2+2V_2^{k_2-1}\lvert \pi_1 \rvert \lvert \pi_2 \rvert+V_2^{k_2}\lvert \pi_2 \rvert^2 \right)\\
                                                                  &\leq 2\Lambda(R)\left( \left(V_1^{k_1-1}+V_2^{k_2-2}\right) \lvert \pi_1 \rvert^2+ V_2^{k_2}\lvert \pi_2 \rvert^2 \right)\\
                                                                  &\leq 2\Lambda(R)\left( \left( V_1^{k_1}+V_2^{k_2-1} \right)\lvert \pi_1 \rvert^2+V_2^{k_2}\lvert \pi_2 \rvert^2 \right).
   \end{align*}
   The verification of other conditions is more or less the same. Moreover, the additional regularity assumption in~\autoref{thm:Morrey} can be shown as follows: for $A\in L_1^2(B_\rho,\Omega^1(\mathfrak{g}))$ and $u\in L_2^{2\alpha}(B_\rho, u^*(TF))$ solve \eqref{eq:weak-Au-extend} weakly, then $A$ solves
   \[
     \Delta A-\left\langle dA,A \right\rangle-\left\langle A,[A,A] \right\rangle+\alpha\left( 1+\lvert \nabla_Au \rvert^2 \right)^{\alpha-1}\left\langle \nabla_Au,u \right\rangle=0,
   \]
   weakly, and note that
   \begin{gather*}
     \left\langle dA,A \right\rangle\in L^p,\quad1<p<2,\qquad\left\langle A,[A,A] \right\rangle\in L^q,\quad1<q<+\infty,\\
     \nabla_Au=\nabla u+A\cdot u\in L^{2\alpha},\quad\left( 1+\lvert \nabla_Au \rvert^2 \right)^{\alpha-1}\in L^{\frac{\alpha}{\alpha-1}},\\
     \alpha\left( 1+\lvert \nabla_Au \rvert^2 \right)^{\alpha-1}\left\langle \nabla_Au,u \right\rangle\in L^{\frac{2\alpha}{2\alpha-1}}.
   \end{gather*}
   Thus $\Delta A\in L^{\frac{2\alpha}{2\alpha-1}}$ and $A\in L_2^{\frac{2\alpha}{2\alpha-1}}\hookrightarrow L_1^{\frac{2\alpha}{\alpha-1}}\hookrightarrow L_1^{4\alpha}$ when $\alpha-1>0$ is small enough. Note that, by Sobolev embedding $(A,u)\in C^\mu(B_\rho)$ for some $\mu\in(0,1)$. Finally, we apply~\autoref{thm:Morrey} to conclude that $(A,u)\in L_2^2(B_\rho)$, this shows the $L_2^2$-interior regularity of the weak solution of $\alpha$-Yang--Mills--Higgs fields.

   \step Next, we show the $L_2^2$-boundary regularity. At a boundary point $x_0\in\partial\Sigma$, since $\partial\Sigma$ is smooth, we can assume that the coordinate chart at $x_0$ is the upper half disc $D_\rho$ centered at origin (since we can ``flatten out'' a piece of the boundary by a bi-Lipschitz map). By \autoref{lem:bdry-reg}, the equation of extended solution $(\tilde{A}, \tilde{u})$ is given by \eqref{eq:weak-Au-extend}. Comparing to \eqref{eq:weak-alpha}, we know that $(\tilde{A},\tilde{u})\in L_1^2(B_\rho,\Omega^1(\mathfrak{g}))\times L_1^{2\alpha}(B_\rho,F)$ satisfies a system of equations similar to \eqref{eq:weak-A-u-Coulomb}, with the coefficients $h$, $\Gamma$, $\lambda$ and $\mu$ extends properly as in \autoref{sec:smoothness}. Although these extended coefficients are $C^\infty$-smooth on $V_\delta\setminus K$, where $V_\delta\subset F$ is a tubular neighborhood of $K$, they are not $C^\infty$-smooth on $V_\delta$ in general. However, under the assumption $K\subset F$ is a totally geodesic sub-manifold, we have $\tilde{h}$, $\tilde{\lambda}$ and $\tilde{\mu}$ are in $C^{1,\alpha'}(V_\delta)$ and $\tilde{\Gamma}\in C^{\alpha'}(V_\delta)$ for some $\alpha'\in(0,1)$. In particular, the regularity requirement of \autoref{thm:Morrey} is satisfied. Moreover, $\tilde{h}$, $\tilde{\lambda}$, $\tilde{\mu}$ and $\tilde{\Gamma}$ as functions of $\tilde{u}$ may be multi-valued, but since $u$ is continuous, if we take $\rho$ small enough, then they are still single-valued as functions of $\tilde{u}$ when restricted to $\tilde{u}(B_\rho)$. With these properties of the extended equations in mind, we can apply the \autoref{thm:Morrey} to show that $(\tilde{A},\tilde{u})\in L_2^2(B_{\rho})$ and $(A,u)\in L_2^2(D_\rho)$.

   \step As long as we show the $L_2^2$-regularity,~\eqref{eq:A-u-strong} holds strongly. If $\alpha-1$ is small, then the linear operator
   \begin{align*}
     \Delta_{(A,u)}\mathpunct{:}L_{k+2}^p(U,F)&\to L_k^p(U,F)\\
     v&\mapsto \Delta_{\Sigma}v-2(\alpha-1)\frac{\left\langle \nabla_A^2v, \nabla_Au \right\rangle\nabla_Au }{1+\lvert \nabla_Au \rvert^2}
   \end{align*}
   is invertible. Now the smoothness of weak solution can be proved by standard bootstrap argument with up to the boundary estimates. In fact, $du\in L_1^2(U,F)$, $A\in L_1^2(U,\mathfrak{g})$, $u\in C^0(U,F)$ and since $\mu$ is smooth, $\mu(u)\in L_1^2\cap C^0(U)$. By the Sobolev multiplications $L_1^2\otimes L_1^2\to L_1^p$ for some $p$ slightly smaller than $2$, $\Phi_\alpha(A,u)\in L_1^p(U,F)$. The inevitability of $\Delta_{(A,u)}$ shows that $u\in L_3^p(U,F)$. Also, since $A\in L_2^2\bigl(U,\Omega^1(\mathfrak{g})\bigr)$, it is easy to show $\Psi_\alpha(A,u)\in L^{3p}(U,\mathfrak{g})$. Thus,~\eqref{eq:A-u-strong} implies that $A\in L_2^{3p}(U,\mathfrak{g})$. Now, $\Phi_\alpha(A,u)\in L_2^p(U,F)$ and so $u\in L_4^p(U,F)$ this time (we need~\autoref{lem:Lp-Dirichlet-Neumann} if $U\cap\partial\Sigma\neq\emptyset$), which in turn gives $\Psi_\alpha(A,u)\in L_2^{2p}(U,\mathfrak{g})$ and $A\in L_4^{2p}(U,\mathfrak{g})$. Iterating like this again and again, we can show that $(A,u)$ is smooth in $U$ \emph{up to the boundary}.

   \step We should note that the above smoothness requires that $A$ is under some $L_2^2$-Coulomb gauge. Let $\left\{ U_\beta \right\}$ be an open cover of $\Sigma'$, where each $U_\beta$ is an open ball $B_\rho$ such that the above interior smooth regularity holds under some $L_2^2$-Coulomb gauge. Since the $\alpha$-YMH functional is invariant under gauge transformation, we can patch these local gauges together to obtain a global gauge $\tilde{S}\in \mathscr{G}_2^2(\Sigma')$ in the same way as~\cite{Song2011CriticalYangMillsHiggs}*{Sect.~3}, such that $(\tilde{S}^*A_\alpha,\tilde{S}^*\phi_\alpha)$ is smooth on $\Sigma'$. In the same manner, we can patch the local gauges over an open cover of $\Sigma$ to obtain a global gauge $\tilde{S}\in \mathscr{G}_2^2(\Sigma)$, such that $(\tilde{S}^*A_\alpha,\tilde{S}^*\phi_\alpha)$ is smooth on $\Sigma$ up to the boundary. This finishes the proof of~\autoref{thm:alpha-smoothness}.
 \end{proof}
 \section{The main estimates}\label{sec:eps-regularity}
 In this section we give some local uniform (independent of $\alpha$) estimates for critical points of $\mathcal{L}_\alpha$, which server as a preparation of blow-up analysis. We focus on the local boundary estimates, because the corresponding interior one follows as in~\cite{Song2011CriticalYangMillsHiggs}*{Sect.~4}. Suppose $U$ is a domain in $\Sigma$ and under a fixed trivialization we write $\phi(x)=(x,u(x))$ and $\nabla_A=d+A$ as before. Since $u\in L_1^{2\alpha}(U,F)\subset C^0(U,F)$, for $x_0=0\in\partial\Sigma$, we can take Fermi coordinates $(f^1,\ldots,f^n)$ on an open neighborhood $V$ of $p=u(x_0)\in K$ such that $V\cap K=\left\{ f^{k+1}=\cdots=f^n=0 \right\}$ as in~\autoref{sec:smoothness}.

 Take polar coordinates $(r,\theta)$ on $U$, we always assume $A$ is in Coulomb gauge with estimate~\eqref{item:coulomb-est} in~\autoref{thm:coulomb} holds. Under these assumptions, the Euler--Lagrange equation of $\mathcal{L}_\alpha$ is given by (see~\eqref{eq:A-u-strong}),
 \begin{equation}\label{eq:A-u-strong-Fermi}
   \begin{cases}
     \Delta_\Sigma u-2(\alpha-1)\frac{\left\langle \nabla^2_Au,\nabla_Au \right\rangle\nabla_Au}{1+|\nabla_Au|^2}-\Phi_\alpha(A,u)=0,&x\in U\\
     \Delta A-\Psi_\alpha(A,u)=0,&x\in U\\
     A_2=0, &x\in\partial U\\
     \frac{\partial A_1}{\partial\nu}=0,&x\in\partial^0 U\\
     \frac{\partial u^a}{\partial\nu}=0,\quad a=1,2,\ldots,k,&x\in\partial^0 U\\
     u^a=0,\quad a=k+1,\ldots,n,&x\in\partial^0 U
   \end{cases}
 \end{equation}
 where $\Phi_\alpha$ and $\Psi_\alpha$ are defined by~\eqref{eq:Phi} and~\eqref{eq:Psi} respectively.

 Similarly, the local equation for critical points of $\mathcal{L}$ is given in the following lemma.
 \begin{lem}
   Suppose $(A,\phi)$ is a critical point of $\mathcal{L}$ on $\mathscr{A}\times \mathscr{S}_K$, then locally, when we choose Coulomb gauge in $U$, that is
   \[
     \begin{cases}
       d^*A=0,&x\in U\\
       \nu\lh A=0,&x\in\partial U,
     \end{cases}
   \]
   the Euler--Lagrange equation can be written as:
   \begin{equation}\label{eq:loc}
     \begin{cases}
       \Delta_\Sigma u-\Phi_1(A,u)=0,&x\in U\\
       \Delta A-\Psi_1(A,u)=0,&x\in U\\
       A_2=0,&x\in\partial U\\
       \frac{\partial A_1}{\partial\nu}=0,&x\in\partial^0 U\\
       \frac{\partial u^a}{\partial\nu}\perp T_uK,\quad a=1,\ldots,k&x\in\partial^0 U\\
       u^a=0,\quad a=k+1,\ldots,n&x\in\partial^0U,
     \end{cases}
   \end{equation}
   where
   \[
     \Phi_1(A,u)=\Gamma(u)(du,du)+2A\cdot du+A\cdot A\cdot u+\mu(u)\cdot\nabla\mu(u)
   \]
   and
   \[
     \Psi_1(A,u)=\left\langle dA,A \right\rangle+\left\langle A,[A,A] \right\rangle-\left\langle \nabla_Au,u \right\rangle.
   \]
 \end{lem}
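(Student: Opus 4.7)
The plan is to specialize the derivation of equation~\eqref{eq:A-u-strong} to the case $\alpha=1$. Because the nonlinear prefactor $\alpha(1+|\nabla_A\phi|^2)^{\alpha-1}$ reduces identically to $1$, all terms produced by differentiating this prefactor drop out, and the rest of the computation parallels the one already carried out in~\autoref{sec:smoothness} for the $\alpha$-YMH field.

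First I would start from the global Euler--Lagrange system~\eqref{eq:EL} together with the boundary conditions $(\mathcal N)$, and fix a local trivialization $\phi(x)=(x,u(x))$ so that $\nabla_A u=du+Au$. Expanding $\nabla_A^*\nabla_A u$ under the Coulomb gauge $d^*A=0$ produces $-\Delta_\Sigma u+\Gamma(u)(du,du)+2\nabla u\lh A+uA^\#\lh A$: the $\Gamma$-term records the second fundamental form of $F\hookrightarrow\RB^l$ (since $u$ is constrained to the target); the term $2\nabla u\lh A$ arises from $d^*(Au)$ after invoking $d^*A=0$; and the quadratic piece $uA^\#\lh A$ comes from $|Au|^2$ in the Weitzenb\"ock expansion. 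Combined with the potential contribution $\mu(u)\cdot\nabla\mu(u)$ from the Higgs term, this gives exactly $\Delta_\Sigma u=\Phi_1(A,u)$.

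Next I would compute $D_A^*F_A$ with $F_A=dA+A\wedge A$. Under the Coulomb gauge the Hodge Laplacian on $1$-forms satisfies $d^*dA=\Delta A$ (because $dd^*A=0$), while the extra $[A,\cdot\,]$-piece of $D_A^*$ acting on $F_A$, together with $d^*(A\wedge A)$, rearranges into $\langle dA,A\rangle+\langle A,[A,A]\rangle$; adding the source $\langle\nabla_A u,u\rangle$ from the $\phi$-coupling produces $\Delta A=\Psi_1(A,u)$. The four boundary conditions are then translated one by one: $\nu\lh A=0$ is literally $A_2=0$ on $\partial U$; expanding $\nu\lh F_A=0$ in the orthonormal frame $\{e_1,\nu\}$ and using $A_2\equiv0$ on $\partial^0 U$ kills the bracket and the tangential-derivative term, leaving $\nabla_\nu A_1=0$; and the free boundary condition $\nu\lh\nabla_A\phi\perp T_\phi\mathcal K^v$ collapses to $\partial u/\partial\nu\perp T_uK$ because $\nu\lh A=0$, which in Fermi coordinates splits into $\partial u^a/\partial\nu=0$ for $a=1,\dots,k$ and (via the constraint $u(\partial^0U)\subset K$) into $u^a=0$ for $a=k+1,\dots,n$.

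The main obstacle will be the bookkeeping in the connection equation: keeping the signs straight in $D_A^*F_A$, checking that the Ricci contribution from the Weitzenb\"ock identity on $1$-forms on the two-dimensional chart is either absent or fully absorbed into $\Psi_1$, and ensuring that the three nonlinear pieces $\langle dA,A\rangle$, $\langle A,[A,A]\rangle$ and $\langle\nabla_Au,u\rangle$ appear with the exact coefficients prescribed. Once this is in place, the section equation and the rewriting of the boundary data are essentially mechanical and exactly parallel the corresponding steps for $\alpha>1$ in the derivation of~\eqref{eq:A-u-strong}.
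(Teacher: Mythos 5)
Your proposal is correct and follows exactly the route the paper takes: the lemma is the $\alpha=1$ specialization of the derivation already carried out for~\eqref{eq:A-u-strong} and~\eqref{eq:EL-L-alpha-local} in~\autoref{sec:smoothness}, where the prefactor $\alpha(1+|\nabla_A u|^2)^{\alpha-1}$ collapses to $1$ and hence the term $2(\alpha-1)\,\nabla^2_A(u)(\nabla_Au,\nabla_Au)/(1+|\nabla_Au|^2)$ and the $(1+|\nabla_Au|^2)^{\alpha-1}$ factors in $\Phi_\alpha$, $\Psi_\alpha$ disappear, leaving $\Phi_1$ and $\Psi_1$ as stated. One small point worth recording for yourself: the worry about a Ricci-type contribution in the Weitzenb\"ock identity is moot here because, as the paper specifies, the local equations are written after passing to a chart where the metric on $U$ is Euclidean, so the Hodge Laplacian of $1$-forms reduces to the flat one and no curvature term arises.
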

 \subsection{\texorpdfstring{$\epsilon$}{epsilon}-regularity estimates}
 The main estimates in Sacks-Uhlenbeck's method is the so-called $\epsilon$-regularity theorem. Here we prove an analogy for $\phi$ with small energy $\lVert \nabla_A\phi \rVert_{L^2(U)}$.

 \begin{lem}[$\epsilon$-regularity]\label{lem:eps-regulairty}
   There exist $\epsilon_0>0$ and $\alpha_0>1$ such that if $(A,u)\in \mathscr{A}(U)\times \mathscr{S}_K(U):=\mathscr{A}|_{U}\times \mathscr{S}_K|_U$ is a smooth pair satisfying~\eqref{eq:A-u-strong-Fermi} for $1\leq\alpha<\alpha_0$ with $\lVert \nabla_A u \rVert_{L^2(U,\Omega^1(F))}<\epsilon_0$ and $\mathcal{L}_\alpha(A,u;U)\leq\Lambda<+\infty$, then for any $U'\subset\!\subset U$ and $p>1$, the following estimate holds uniformly in $1\leq\alpha<\alpha_0$,
   \[
     \lVert u - \bar u \rVert_{L_2^p(U', F)}\leq C\left( \lVert \nabla_Au \rVert_{L^p(U,\Omega^1(F))}+\lVert F_A \rVert_{L^p(U,\Omega^2(\mathfrak{g}))}+1 \right),
   \]
   where $\bar u$ is the integral mean over $U$ and $C>0$ is a constant depending on $U$, $U'$, $F$, $\Lambda$, $\lVert \mu \rVert_{L_1^\infty(F)}$, $p$, $\alpha_0$, $\epsilon_0$.
 \end{lem}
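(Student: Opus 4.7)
The plan is to adapt the Sacks--Uhlenbeck $\epsilon$-regularity argument to the coupled $(A,u)$-system, handling the free/Neumann boundary through the reflection technique of \autoref{sec:smoothness}. The smallness of $\lVert\nabla_A u\rVert_{L^2(U)}$ will be used to absorb the leading quadratic term $\Gamma(u)(du,du)$ on the right-hand side of the $u$-equation.

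First I would reflect $(A,u)$ across $\partial^0 U$ to obtain $(\tilde A,\tilde u)$ on the full disc $B_\rho$. By \autoref{lem:bdry-reg} the reflected pair solves weakly an interior equation of the same structural form, and $\lVert\nabla_{\tilde A}\tilde u\rVert_{L^2(B_\rho)}^2 \leq 2\lVert\nabla_A u\rVert_{L^2(D_\rho)}^2 < 2\epsilon_0^2$. This reduces the problem to a purely interior $\epsilon$-regularity estimate on $B_\rho$, from which the desired estimate on $D_{\rho'}\subset D_\rho$ follows by restriction. Reflection handles the mixed Dirichlet/Neumann components on $u$ and $A$ in a single step, so the boundary results collected in \autoref{sec:bdry-regularity} are not invoked at this stage.

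On $B_\rho$ the system takes the form $\Delta_\Sigma u = R_\alpha(A,u)$ and $\Delta A = \Psi_\alpha(A,u)$ under Coulomb gauge, with $R_\alpha$ consisting of the dangerous quadratic $\Gamma(u)(du,du)$, its $(\alpha-1)$-correction $2(\alpha-1)\nabla_A^2(u)(\nabla_A u,\nabla_A u)/(1+\lvert\nabla_A u\rvert^2)$, and lower-order terms linear in $A$ or $du$ or involving $\mu$. Interior Calder\'on--Zygmund applied to each equation yields $\lVert u-\bar u\rVert_{L_2^p}\leq C(\lVert R_\alpha\rVert_{L^p}+\lVert u-\bar u\rVert_{L^p})$ and, together with the Coulomb-gauge bound from \autoref{thm:coulomb}, a control of $A$ in $L_2^p$ by $\lVert\Psi_\alpha\rVert_{L^p}+\lVert F_A\rVert_{L^p}$. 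The lower-order contributions in $R_\alpha$ and the $A$-nonlinearities in $\Psi_\alpha$ are dominated by $\lVert\nabla_A u\rVert_{L^p}+\lVert F_A\rVert_{L^p}+1$ via two-dimensional Sobolev multiplications and the embedding $\mathscr{S}_1^{2\alpha}\hookrightarrow C^0$.

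The main obstacle is the quadratic term $\lvert du\rvert^2$. I would establish the case $p=2$ first, using the two-dimensional Gagliardo--Nirenberg inequality $\lVert du\rVert_{L^4}^2\leq C\lVert du\rVert_{L^2}\lVert u-\bar u\rVert_{L_2^2}$ to obtain $\lVert\Gamma(u)(du,du)\rVert_{L^2}\leq C\epsilon_0\lVert u-\bar u\rVert_{L_2^2}$; for $\epsilon_0$ small, and with $\alpha_0-1$ small enough to control the prefactor $2(\alpha-1)$ of the second dangerous term, both contributions are absorbed into the left-hand side of the Calder\'on--Zygmund estimate, yielding the $p=2$ case. Once $u\in L_2^2$ and $A\in L_2^2$ are available uniformly in $\alpha\in[1,\alpha_0)$, Sobolev embedding gives $du$ and $A$ in $L^q$ for every $q<\infty$, and a standard bootstrap alternating between the two Calder\'on--Zygmund estimates produces the $L_2^p$ estimate for any $p>1$. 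Uniformity as $\alpha\to 1^+$ is preserved because every $\alpha$-dependent coefficient remains bounded above and below and the linearised operator $\Delta_{(A,u)}$ of \autoref{sec:smoothness} stays invertible for $\alpha_0-1$ sufficiently small.
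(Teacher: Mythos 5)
Your route is genuinely different from the paper's: you propose to reflect $(A,u)$ across $\partial^0U$ and run an \emph{interior} Calder\'on--Zygmund argument on $B_\rho$, whereas the paper works directly on the half-disc, multiplies the $u$-equation by a cutoff $\eta$ with $\partial_r\eta=0$ on $\partial\Sigma\cap U$ so that $\eta u$ satisfies homogeneous mixed Dirichlet/Neumann boundary conditions, and invokes the half-disc estimate of \autoref{lem:Lp-Dirichlet-Neumann}. Your reflection shortcut is conceptually plausible (and \autoref{lem:bdry-reg} is there to justify it), but it carries a cost the paper avoids: after reflection, $\tilde h$ and hence $\tilde\Gamma$ are only Lipschitz/bounded across the reflection line, and $\tilde u$, $\tilde A$ are only Lipschitz there, so one must interpret the reflected system weakly and be careful that the cut-off and integration by parts steps still go through. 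The paper's direct boundary $L^p$-estimate sidesteps all of this.

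There is, however, a genuine gap in the step where you close the estimate at $p=2$. You write $\lVert\Gamma(u)(du,du)\rVert_{L^2}\leq C\lVert du\rVert_{L^2}\lVert u-\bar u\rVert_{L_2^2}\leq C\epsilon_0\lVert u-\bar u\rVert_{L_2^2}$, i.e.\ you use $\lVert du\rVert_{L^2}<\epsilon_0$. But the hypothesis only gives $\lVert \nabla_Au\rVert_{L^2(U)}<\epsilon_0$. Since $du=\nabla_Au-Au$ and $\lVert A\rVert_{L^2}$ is controlled only by $\lVert A\rVert_{L_1^2}\leq C\lVert F_A\rVert_{L^2}\leq C\sqrt\Lambda$ (bounded, not small), $\lVert du\rVert_{L^2}$ is \emph{not} small, and the absorption fails. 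The paper avoids this exactly by not starting at $p=2$: for $1<p<2$ the quadratic term is written as $\lvert d(\eta u)\rvert\,\lvert\nabla_Au\rvert$ (after recombining $\eta\lvert du\rvert^2$ with the $A$-terms) and is estimated by H\"older as $\lVert d(\eta u)\rVert_{L^{p^*}}\lVert\nabla_Au\rVert_{L^2}\leq C\lVert \eta u\rVert_{L_2^p}\lVert\nabla_Au\rVert_{L^2}$ with $p^*=2p/(2-p)<\infty$; here the small factor is $\lVert\nabla_Au\rVert_{L^2}$, which is the one controlled by $\epsilon_0$. One first obtains the estimate for some $p\in(1,2)$ (e.g.\ $p=4/3$), which by $L_2^{4/3}\hookrightarrow L_1^4$ yields $du\in L^4$, and only then derives the $p=2$ case. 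Your proposal needs to be rearranged in the same spirit: either rewrite the quadratic term so that $\nabla_Au$ (rather than $du$) is the small factor before applying Gagliardo--Nirenberg, or, as the paper does, first prove the estimate for $p<2$ and bootstrap.
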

 \begin{rmk}
   Note that~\eqref{eq:A-u-strong-Fermi} and \eqref{eq:loc} require that $A$ is in Coulomb gauge. We remark that when the radius of $U$ is small enough, this is always satisfied.

   In fact, $\mathcal{L}_\alpha(A,u;U)\leq \Lambda<+\infty$, in particular, $\lVert F_{A} \rVert_{L^2(U)}^2\leq \Lambda$. Thus, there exists a small constant $r_0$ (depending only on $\Lambda$, the geometry of $\Sigma$ and $\delta_0$), such that $\lVert F_A \rVert_{L^1(U)}\leq \delta_0$, provided that the radius of $U$ is smaller than $r_0$, so we may assume $A$ is in Coulomb gauge by~\autoref{thm:coulomb}.
 \end{rmk}
 \begin{proof}
   Since the interior case can be proved by a minor modification of the following boundary case, we assume $U$ is a upper half disc centered at $x_0=0\in\partial\Sigma$. As $F$ is compact and embedded into Euclidean space, we can assume $\bar u=0$ without loss of generality. In particular, we have the following Poincar\'e inequality,
   \[
     \lVert u \rVert_{L^p(U)}\leq C(U,p)\lVert du \rVert_{L^p(U)}.
   \]
   Since $\nabla_Au=du+Au$,
   \begin{equation}\label{eq:u-nabla_Au-control}
     \lVert u \rVert_{L_1^p(U)}\leq C(U,F,p)\left( \lVert \nabla_Au \rVert_{L^p(U)}+\lVert A \rVert_{L^p(U)} \right).
   \end{equation}

   Suppose $\eta$ is a cutoff function supported on $U$, $\nu\cdot\nabla\eta=0$ on $\partial\Sigma\cap U$ and $\eta|_{U'}\equiv1$. Multiplying the equation of $u$ in~\eqref{eq:A-u-strong-Fermi} by $\eta$, a direct computation shows
   \begin{align*}
     \Delta_\Sigma(\eta u)&\leq C(U,U')\left( \eta\lvert \Delta_\Sigma u \rvert+\lvert du \rvert+\lvert u \rvert \right)\\
                          &\leq C(U,U')\left( \eta(\alpha-1)\lvert \nabla_A^2u \rvert+\eta\lvert \Phi_\alpha(A,u) \rvert+\lvert du \rvert+\lvert u \rvert \right)\\
                          &\leq C(U,U')\bigl( (\alpha-1) \lvert d^2(\eta u) \rvert +\lvert d(\eta u) \rvert\lvert du \rvert\\
                          &\myquad[7]+\lvert dAu \rvert +\lvert A du \rvert+\lvert A^2 u \rvert+\lvert \nabla\mu(u) \rvert\lvert \mu(u) \rvert+\lvert du \rvert+\lvert u \rvert\bigr)\\
                          &\leq C(U,U',F)\Bigl( (\alpha-1) \lvert d^2(\eta u) \rvert+\lvert d(\eta u) \rvert\lvert \nabla_Au \rvert+\lvert \nabla_Au \rvert\lvert A \rvert\\
                          &\myquad[17]+\lvert A \rvert+\lvert A^2u \rvert+\lvert dA \rvert+\lVert \mu \rVert_{L_1^\infty}+\lvert du \rvert+\lvert u \rvert\Bigr).
   \end{align*}
   Now, note that the boundary condition of $\eta u$ is either of homogeneous Dirichlet or Neumann type
   \[
     \begin{cases}
       \frac{\partial (\eta u^a)}{\partial\nu}=0,\quad a=1,2,\ldots,k,\\
       \eta u^b=0,\quad b=k+1,\ldots,n.
     \end{cases}
   \]
   By the standard $L^p$ estimate (see~\autoref{lem:Lp-Dirichlet-Neumann}), the Sobolev embedding $L_1^{p} \hookrightarrow L^{2p}$ and~\eqref{eq:u-nabla_Au-control},
   \begin{equation}\label{eq:eta-u-l2p-est}
     \begin{split}
       \lVert \eta u \rVert_{L_2^p(U)}&\leq C\bigl(U,U',F,\lVert \mu \rVert_{L_1^\infty},p\bigr)
       \begin{multlined}[t]
         \Bigl( (\alpha-1)\lVert d^2(\eta u) \rVert_{L^p(U)}+\lVert \lvert d(\eta u) \rvert\lvert \nabla_Au \rvert \rVert_{L^p(U)}\\
           +\lVert \lvert A \rvert\lvert \nabla_Au \rvert \rVert_{L^p(U)}+\lVert A \rVert_{L_1^{p}(U)}\\
         +\lVert A^2u \rVert_{L^p(U)}+\lVert \nabla_Au \rVert_{L^p(U)}+1 \Bigr).
       \end{multlined}
     \end{split}
   \end{equation}
   First, for $1<p<2$, by the Sobolev embedding $L_1^p \hookrightarrow L^{p^*}$, $p^*=2p/(2-p)$ and H\"older's inequality,
   \begin{align*}
     \lVert \lvert d(\eta u) \rvert\lvert \nabla_Au \rvert \rVert_{L^p(U)}
    & \leq\lVert d(\eta u) \rVert_{L^{p^*}(U)}\lVert \nabla_Au \rVert_{L^2(U)}\\
    & \leq C(U,p)\lVert d(\eta u) \rVert_{L_1^p(U)}\lVert \nabla_Au \rVert_{L^2(U)},\\
    \lVert \lvert A \rvert\lvert \nabla_Au \rvert \rVert_{L^p(U)}
    & \leq C(U,p)\lVert A \rVert_{L_1^p(U)}\lVert \nabla_Au \rVert_{L^2(U)},
    \intertext{and}
    \lVert A^2u \rVert_{L^p(U)}
    & \leq\lVert \lvert A \rvert\cdot\lvert Au \rvert \rVert_{L^p(U)}\leq C(F)\lVert A \rVert_{L_1^p(U)}\lVert A \rVert_{L_1^2(U)}.
   \end{align*}
   Since $A$ is in Coulomb gauge in $U$, by~\eqref{item:coulomb-est} of~\autoref{thm:coulomb}
   \[
     \lVert A \rVert_{L_1^p(U)}\leq C\lVert F_A \rVert_{L^p(U)},\quad
     \lVert A \rVert_{L_1^2(U)}\leq C\lVert F_A \rVert_{L^2(U)}.
   \]
   Plugging these estimates into~\eqref{eq:eta-u-l2p-est}, when $\alpha_0-1$ is small enough,
   \begin{align*}
     \lVert \eta u \rVert_{L_2^p(U)}&\leq C
     \begin{multlined}[t]
       \Bigl[ \lVert \nabla_Au \rVert_{L^2(U)}\left( \lVert d(\eta u) \rVert_{L_1^p(U)}+\lVert A \rVert_{L_1^p(U)}\right)\\
       +\lVert F_A \rVert_{L^p(U)}+\lVert \nabla_Au \rVert_{L^p(U)}+1 \Bigr],
     \end{multlined}
   \end{align*}
   where $C>0$ is a constant depending on $U,U',F,\Lambda,\lVert \mu \rVert_{L_1^\infty},p$ and $\alpha_0$. Therefore, if we take $\lVert \nabla_Au \rVert_{L^2(U)}\leq \epsilon_0$ small enough (in particular, it depends on $\alpha_0$), then we can employ the estimate of Coulomb gauge again to conclude
   \[
     \lVert u \rVert_{L_2^p(U')}\leq C(U,U',F,\Lambda,\lVert \mu \rVert_{L_1^\infty},p,\alpha_0,\epsilon_0)\Bigl( \lVert \nabla_Au \rVert_{L^p(U)}+\lVert F_A \rVert_{L^p(U)}+1 \Bigr).
   \]
   The general case of $p$ follows from a bootstrap argument. We only illustrate the case for $p=2$ in what follows. Firstly, apply the above estimate for $p=4/3$, then the Sobolev embedding $L_2^{4/3} \hookrightarrow L_1^4$ implies $du\in L^4$. Therefore,
   \[
     \lVert \lvert du \rvert^2 \rVert_{L^2(U)}\leq\lVert du \rVert_{L^4(U)}^2\leq C\Bigl( \lVert \nabla_Au \rVert_{L^2(U)}+\lVert F_A \rVert_{L^2(U)}+1 \Bigr)^2.
   \]
   Since $L_1^2 \hookrightarrow L^4$ and $\lVert A \rVert_{L_1^2(U)}\leq C\lVert F_A \rVert_{L^2(U)}$ by~\eqref{item:coulomb-est} of~\autoref{thm:coulomb},
   \begin{align*}
     \lVert \lvert Adu \rvert \rVert_{L^2(U)}&\leq\lVert A \rVert_{L^4(U)}\lVert du \rVert_{L^4(U)} \leq C\Bigl(  \lVert \nabla_Au \rVert_{L^2(U)}+\lVert F_A \rVert_{L^2(U)}+1  \Bigr),
   \end{align*}
   where $C$ depends on $U,U',F,\Lambda,\lVert \mu \rVert_{L_1^\infty},\alpha_0$ and $\epsilon_0$. Now, the standard $L^2$ estimate gives similar to~\eqref{eq:eta-u-l2p-est},
   \begin{align*}
     \lVert \eta u \rVert_{L_2^2(U)}&\leq C(U,U',F,\Lambda,\lVert \mu \rVert_{L_1^\infty})
     \begin{multlined}[t]
       \Bigl( (\alpha-1)\lVert d^2(\eta u) \rVert_{L^2(U)}+\lVert \lvert du \rvert^2 \rVert_{L^2(U)}\\
         +\lVert \lvert Adu \rvert \rVert_{L^2(U)}+\lVert A \rVert_{L_1^{2}(U)}\\
       +\lVert A^2u \rVert_{L^2(U)}+\lVert \nabla_Au \rVert_{L^2(U)}+1 \Bigr),
     \end{multlined}
   \end{align*}
   and we can proceed as before to show the required estimate holds for $p=2$.
 \end{proof}
 Since the equation of the connection $A$ is subcritical in dimension 2, we can prove the following lemma.
 \begin{lem}\label{lem:eps-reg-A}
   For any $1<p<2$, there exists $\alpha_0=\alpha_0(p)>1$ such that for any $1<\alpha<\alpha_0$, if $(A,u)\in \mathscr{A}(U)\times \mathscr{S}_K(U)$ is a smooth pair which satisfies~\eqref{eq:A-u-strong-Fermi} for $1<\alpha<\alpha_0$ with $\mathcal{L}_\alpha(A,u;U)\leq\Lambda<+\infty$, then
   \[
     \lVert A \rVert_{L_2^p(U',\Omega^1(\mathfrak{g}))}\leq C\left( \lVert \nabla_Au \rVert_{L^2(U,\Omega^1(F)}+\lVert F_A \rVert_{L^2(U,\Omega^2(\mathfrak{g}))} \right),
   \]
   where $U'\subset\!\subset U$ and $C>0$ is a constant depending on $U$, $U'$, $F$, $\Lambda$, $p$.
 \end{lem}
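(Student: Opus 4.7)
The plan is to apply an $L^p$-elliptic estimate to the equation $\Delta A = \Psi_\alpha(A,u)$ from the second line of~\eqref{eq:A-u-strong-Fermi}, localized by a cutoff and supplemented by the boundary conditions $A_2=0$ on $\partial U$ and $\partial_r A_1=0$ on $\partial^0 U$. Since $A$ is in Coulomb gauge on $U$, item~\eqref{item:coulomb-est} of~\autoref{thm:coulomb} combined with the Sobolev embedding $L_1^2\embedto L^q$ in dimension two yields $\lVert A\rVert_{L^q(U)} \leq C(q)\lVert F_A\rVert_{L^2(U)}$ for every finite $q$. This immediately controls the quadratic and cubic terms of $\Psi_\alpha$: for $1<p<2$, H\"older's inequality gives
\[
\lVert \inner{dA,A}\rVert_{L^p(U)} \leq \lVert dA\rVert_{L^2}\lVert A\rVert_{L^{2p/(2-p)}} \leq C\lVert F_A\rVert_{L^2}^2,
\]
and likewise $\lVert \inner{A,[A,A]}\rVert_{L^p(U)} \leq C\lVert F_A\rVert_{L^2}^3$.

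The delicate term is the nonlinear coupling $\alpha(1+|\nabla_A u|^2)^{\alpha-1}\inner{\nabla_A u,u}$. Since $u$ takes values in the compact target $F$, $|u|$ is uniformly bounded; using the pointwise inequality
\[
\bigl|(1+|\nabla_A u|^2)^{\alpha-1}\nabla_A u\bigr|^{2\alpha/(2\alpha-1)} \leq C\,(1+|\nabla_A u|^2)^{\alpha}
\]
together with the energy bound $\mathcal{L}_\alpha(A,u;U) \leq \Lambda$, this term lies in $L^{2\alpha/(2\alpha-1)}(U)$ with norm depending only on $\Lambda$. Choosing $\alpha_0(p)$ so that $2\alpha/(2\alpha-1) > p$ for all $1<\alpha<\alpha_0$ (which forces $\alpha_0 < p/(2(p-1))$) places this term in $L^p(U)$ with bound depending on $\Lambda$, $p$, and $|U|$.

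Finally, multiply by a cutoff $\eta$ with $\eta\equiv 1$ on $U'$, $\supp\eta\subset U$, and $\partial_r\eta=0$ on $\partial^0 U$. Then $\eta A$ satisfies
\[
\Delta(\eta A) = \eta\Psi_\alpha(A,u) - 2\inner{d\eta,dA} - (\Delta\eta)A,
\]
subject to the same mixed boundary conditions, with the right-hand side already bounded in $L^p(U)$ by a multiple of $\lVert F_A\rVert_{L^2(U)} + \lVert \nabla_A u\rVert_{L^2(U)}$ after absorbing the $\Lambda$-dependent constants. Applying the $L^p$-elliptic estimate for the Laplacian with these mixed Dirichlet/Neumann boundary conditions componentwise---the analogue of~\autoref{lem:Lp-Dirichlet-Neumann} collected in~\autoref{sec:bdry-regularity}---yields the claimed bound on $\lVert A\rVert_{L_2^p(U')}$. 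The main technical point is the uniform control of the coupling term as $\alpha\to 1$, which is precisely what forces the restriction $1<p<2$ and the $p$-dependent choice of $\alpha_0$; the mixed boundary conditions on the two components $A_1, A_2$ and the cutoff localization are routine given the boundary regularity results collected in~\autoref{sec:bdry-regularity}.
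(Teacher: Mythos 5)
Your handling of the quadratic and cubic terms, the use of the Coulomb gauge estimate, and the final elliptic estimate via a cutoff and \autoref{lem:Lp-Dirichlet-Neumann} are all fine, and broadly parallel to the paper. But there is a genuine gap in the treatment of the coupling term $\alpha(1+|\nabla_A u|^2)^{\alpha-1}\langle \nabla_A u, u\rangle$, which is the crux of the lemma.

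Your pointwise inequality $\bigl|(1+|\nabla_A u|^2)^{\alpha-1}\nabla_A u\bigr|^{2\alpha/(2\alpha-1)}\leq (1+|\nabla_A u|^2)^{\alpha}$ is correct (one absorbs $|\nabla_A u|$ into $V=(1+|\nabla_A u|^2)^{1/2}$), and it does yield a uniform $L^{2\alpha/(2\alpha-1)}$ bound in terms of $\Lambda$. However, precisely because you have replaced $|\nabla_A u|$ by $V\geq 1$, the resulting bound is a \emph{constant} depending on $\Lambda$ and $|U|$, not a multiple of $\lVert\nabla_A u\rVert_{L^2(U)}$. The final sentence "after absorbing the $\Lambda$-dependent constants" is where the argument breaks: a constant cannot be absorbed into a term that may be arbitrarily small. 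The claimed inequality requires that $\lVert A\rVert_{L_2^p(U')}\to 0$ as $\lVert\nabla_A u\rVert_{L^2}+\lVert F_A\rVert_{L^2}\to 0$, and your estimate on the coupling term does not have this structure.

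The paper avoids this loss by a different H\"older split: it writes the coupling term as the product of $(1+|\nabla_A u|^2)^{\alpha-1}\in L^{\alpha/(\alpha-1)}$ (bounded by $\Lambda^{(\alpha-1)/\alpha}$ from the energy bound) and $\langle\nabla_A u,u\rangle\in L^2$ (bounded by $C(F)\lVert\nabla_A u\rVert_{L^2}$), so the $\lVert\nabla_A u\rVert_{L^2}$ factor is retained. This gives the coupling term in $L^{\alpha^*}$ with $1/\alpha^*=(\alpha-1)/\alpha + 1/2$, and asking $p\leq\alpha^*$ forces $\alpha\leq 2p/(3p-2)$ --- a smaller threshold than your $p/(2(p-1))$, which is an indication that your estimate, though valid for a larger range of $\alpha$, is too lossy. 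To repair your argument: apply H\"older on $U$ directly, pairing $(1+|\nabla_A u|^2)^{\alpha-1}$ in $L^{2p/(2-p)}$ against $\nabla_A u$ in $L^2$; the requirement that $\tfrac{2p}{2-p}(\alpha-1)\leq\alpha$, so that the first factor is controlled by the $\alpha$-energy, recovers exactly the paper's constraint $\alpha\leq 2p/(3p-2)$ and yields the needed $\lVert\nabla_A u\rVert_{L^2}$ factor.
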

 \begin{proof}
   Note that the equation for $A$ in~\eqref{eq:A-u-strong-Fermi} is given by,
   \[
     \begin{cases}
       \Delta A-\Psi_\alpha(A,u)=0,&x\in U\\
       A_2=0,&x\in\partial U\\
       \frac{\partial A_1}{\partial\nu}=0,&x\in\partial^0U,
     \end{cases}
   \]
   where
   \[
     \Psi_\alpha(A,u)=\left\langle dA,A \right\rangle+\left\langle A,[A,A] \right\rangle-\alpha(1+|\nabla_Au|^2)^{\alpha-1}\left\langle \nabla_Au,u \right\rangle.
   \]
   By H\"older's inequality and the Sobolev embedding, for any $1<p<2$, let $p^*=2p/(2-p)$, we have
   \[
     \lVert \left\langle dA,A \right\rangle \rVert_{L^p(U)}\leq C\lVert dA \rVert_{L^2(U)}\lVert A \rVert_{L^{p^*}(U)}\leq C\lVert A \rVert_{L_1^2(U)}^2,
   \]
   and since $L_1^2 \hookrightarrow L^q$, for any $1<q<+\infty$,
   \[
     \lVert \left\langle A,[A,A] \right\rangle \rVert_{L^p(U)}\leq C\lVert A \rVert_{L^{2p}(U)}\leq C\lVert A \rVert_{L_1^2(U)}^3.
   \]
   As we already assumed that $A$ is in Coulomb gauge, by~\autoref{thm:coulomb},
   \[
     \lVert A \rVert_{L_1^2(U)}\leq C\lVert F_A \rVert_{L^2(U)}.
   \]
   It is easy to show, for $\alpha^*$ with $\frac{1}{\alpha^*}=\frac{1}{2}+\frac{\alpha-1}{\alpha}$,
   \begin{align*}
     \left\lVert \left(1+\lvert \nabla_Au \rvert^2\right)^{\alpha-1}\left\langle \nabla_Au,u \right\rangle \right\rVert_{L^{\alpha^*}(U)}
    &\leq\left\lVert \left( 1+\lvert \nabla_Au \rvert^2 \right)^\alpha \right\rVert_{L^1(U)}^{(\alpha-1)/\alpha}\lVert \left\langle \nabla_Au,u \right\rangle \rVert_{L^2(U)}\\
    &\leq C(F) \Lambda^{(\alpha-1)/\alpha}\lVert \nabla_Au \rVert_{L^2(U)}.
   \end{align*}
   Thus, for any $1<p<2$, we can take $\alpha(p)=2p/(3p-2)\in(1,2)$, such that for any $1<\alpha\leq\alpha(p)$, we have $p\leq \alpha^*$ and
   \[
     \left\lVert \left(1+\lvert \nabla_Au \rvert^2\right)^{\alpha-1}\left\langle \nabla_Au,u \right\rangle \right\rVert_{L^p(U)}
     \leq C(F,\Lambda,p)\lVert \nabla_Au \rVert_{L^2(U)}.
   \]
   The $L^p$-estimate (see~\autoref{lem:Lp-Dirichlet-Neumann}) implies that, for any $U'\subset\!\subset U$,
   \[
     \lVert A \rVert_{L_2^p(U')}\leq C(U,U',F,\Lambda,p)\left( \lVert \nabla_Au \rVert_{L^2(U)}+\lVert F_A \rVert_{L^2(U)} \right).
   \]
 \end{proof}
 In application, we also need the scaled version of small energy estimate. For any $r$, $0<r<r_0<1$ (such that $A$ is in Coulomb gauge over $U_r$), and any fixed point $x_0\in U_r$, define the scaling map $\lambda_r \mathpunct{:}U\to U_r$, $x\mapsto x_0+rx$. If $(A,u)\in \mathscr{A}(U_r)\times \mathscr{S}_K(U_r)$ is a smooth pair which satisfies~\eqref{eq:A-u-strong-Fermi} with $\mathcal{L}_\alpha(A,u;U_r)\leq\Lambda<+\infty$, then it is easy to show, the pullback connection $\hat A \mathpunct{:}=\lambda_r^*A$ (which is in Coulomb gauge over $U$) and the pullback section $\hat u \mathpunct{:}=\lambda_r^*u=u\circ\lambda_r$ are locally given by
 \begin{equation}\label{eq:A-u-scale}
   \hat A(x) \mathpunct{:}=\lambda_r^*A(x)=rA(x_0+rx),\quad
   \hat u(x) \mathpunct{:}=\lambda_r^*u=u(x_0+r x)
 \end{equation}
 respectively. Therefore, $(\hat A,\hat u)$ satisfies
 \begin{equation}\label{eq:scale}
   \begin{cases}
     \Delta_\Sigma \hat u-2(\alpha-1)\frac{\left\langle \nabla_{\hat A}^2\hat u,\nabla_{\hat A}\hat u \right\rangle\nabla_{\hat A}\hat u}{r^2+\lvert \nabla_{\hat A}\hat u \rvert^2}-\hat\Phi_\alpha(\hat A,\hat u)=0,&x\in U\\
     \Delta\hat A-\hat\Psi_\alpha(\hat A,\hat u)=0,&x\in U\\
     \hat u^a=0,\quad a=k+1,\ldots,n,&x\in\partial^0U\\
     \frac{\partial\hat u^a}{\partial \nu}=0,\quad a=1,2,\ldots,k,&x\in\partial^0U\\
     \hat A_1=0,&x\in\partial U\\
     \frac{\partial \hat A_2}{\partial \nu}=0,&x\in\partial^0U,
   \end{cases}
 \end{equation}
 where $\partial^0U=\left\{ (x-x_0)/r:x\in\partial\Sigma\cap U_r \right\}$, $0<r<r_0<1$,
 \[
   \hat \Phi_\alpha(\hat A,\hat u)=\Gamma(\hat u)\left( d\hat u,d\hat u \right)+2\hat A\cdot d\hat u+\hat A\cdot \hat A\cdot \hat u+r^2 \frac{\nabla\mu(\hat u)(\mu(\hat u))}{\alpha(1+r^{-2}\lvert \nabla_{\hat A}\hat u \rvert^2)^{\alpha-1}}
 \]
 and
 \[
   \hat\Psi_\alpha(\hat A,\hat u)=\left\langle\hat A,d\hat A\right\rangle+\left\langle \hat A,[\hat A,\hat A] \right\rangle-\alpha r^2\left( 1+r^{-2}\lvert \nabla_{\hat A}\hat u \rvert^2 \right)^{\alpha-1}\left\langle \nabla_{\hat A}\hat u,\hat u \right\rangle.
 \]
 \begin{cor}\label{cor:eps-regularity-scaled}
   There exist $\epsilon_0>0$ and $\alpha_0>0$, such that for any smooth $(\hat A,\hat u)\in \mathscr{A}(U)\times \mathscr{S}_K(U)$ which solves~\eqref{eq:scale} for $1<\alpha<\alpha_0$, and for any $p>1$, if $\hat A$, $\hat u$ satisfies
   \begin{equation}\label{eq:scale-condi}
     \lVert \nabla_{\hat A}\hat u \rVert_{L^2(U)}\leq\eps_0,
   \end{equation}
   then for any $k=2,3,\ldots$,
   \begin{multline*}
     \lVert \hat u -\bar{\hat u}\rVert_{L_k^p(U_{1/2},F)}\\
     \leq C(\mathrm{diam}(U),F,\Lambda,\lVert \mu \rVert_{L_1^\infty},p,k,\alpha_0,\epsilon_0)\left( \lVert \nabla_{\hat A}\hat u \rVert_{L^2(U,\Omega^1(F))}+\lVert F_{\hat A} \rVert_{L^2(U,\Omega^2(\mathfrak{g}))}+1 \right),
   \end{multline*}
   and
   \[
     \lVert \hat A \rVert_{L_k^p(U_{1/2},F)}\leq C(\mathrm{diam}(U),F,\Lambda,p,k)\left( \lVert \nabla_{\hat A}\hat u \rVert_{L^2(U,\Omega^1(F))}+\lVert F_{\hat A} \rVert_{L^2(U,\Omega^2(\mathfrak{g}))} \right),
   \]
   where $\Lambda$ is the bound of $\mathcal{L}_\alpha(A,u;U_r)$.
 \end{cor}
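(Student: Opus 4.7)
The proof proceeds in two stages. In the first, I apply \autoref{lem:eps-regulairty} and \autoref{lem:eps-reg-A} directly to the scaled pair $(\hat A,\hat u)$ on $U$ to obtain the base case $k=2$. In the second, I bootstrap to every $k\geq 3$ via standard elliptic regularity.

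For the base case, the smallness hypothesis \eqref{eq:scale-condi} is exactly what \autoref{lem:eps-regulairty} requires, while the energy bound $\mathcal{L}_\alpha(A,u;U_r)\leq\Lambda$ transports without loss under the scaling \eqref{eq:A-u-scale} since the Dirichlet and curvature $L^2$-energies are conformally invariant in dimension two and $\mu(\hat u)$ is uniformly bounded by the compactness of $F$. The scaled system \eqref{eq:scale} shares the algebraic structure of \eqref{eq:A-u-strong-Fermi}; the extra powers of $r\in(0,r_0)$ appearing in $\hat\Phi_\alpha,\hat\Psi_\alpha$ only improve the coefficient bounds, and the factor
\[
  r^2\bigl(1+r^{-2}\lvert\nabla_{\hat A}\hat u\rvert^2\bigr)^{\alpha-1}=r^{4-2\alpha}\bigl(r^2+\lvert\nabla_{\hat A}\hat u\rvert^2\bigr)^{\alpha-1}
\]
is dominated by $C(r_0,\alpha_0)\bigl(1+\lvert\nabla_{\hat A}\hat u\rvert^2\bigr)^{\alpha-1}$ once $\alpha_0-1$ is small. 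Hence the proofs of \autoref{lem:eps-regulairty} and \autoref{lem:eps-reg-A} apply verbatim to $(\hat A,\hat u)$, first giving the $L_2^p$-bound for $1<p<2$ and, via the iterative step inside \autoref{lem:eps-regulairty}, for every $p>1$.

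For the induction, fix a decreasing sequence $3/4>\rho_2>\rho_3>\cdots\downarrow 1/2$. Assuming the $L_k^p$-bounds hold on $U_{\rho_k}$, the source terms $\hat\Phi_\alpha(\hat A,\hat u)$ and $\hat\Psi_\alpha(\hat A,\hat u)$ are polynomial in $\hat A$, $\hat u$, $\nabla\hat u$, composed with smooth factors $\Gamma(\hat u)$, $\mu(\hat u)$, and $(r^2+\lvert\nabla_{\hat A}\hat u\rvert^2)^{\alpha-1}$; since $k\geq 2$ and $p>1$ together force $kp>2$, the space $L_k^p$ is a Banach algebra in dimension two, so Sobolev multiplication places the source terms in $L_{k-1}^p(U_{\rho_k})$. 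The linear $L^p$-estimates of \autoref{lem:Neumann-bdry-estimates} applied to the two mixed boundary-value problems of \eqref{eq:scale} (mixed Neumann/Dirichlet for the components of $\hat u$ and mixed Dirichlet/Neumann for the components of $\hat A$) then promote this to $L_{k+1}^p$-bounds on $U_{\rho_{k+1}}\subset\!\subset U_{\rho_k}$, closing the induction.

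The main obstacle is that the leading operator in the $\hat u$-equation is not $\Delta_\Sigma$ but $\Delta_\Sigma-2(\alpha-1)P(\hat A,\hat u)$, where $P(\hat A,\hat u)v=\nabla_{\hat A}^2(v)(\nabla_{\hat A}\hat u,\nabla_{\hat A}\hat u)/(r^2+\lvert\nabla_{\hat A}\hat u\rvert^2)$ is second order with pointwise coefficient norm at most $1$. To invoke the linear $L^p$-theory I treat $(\alpha-1)P$ as a small $L^\infty$-coefficient perturbation of $\Delta_\Sigma$ and absorb it, exactly as in the proof of \autoref{thm:alpha-smoothness}; the required smallness of $\alpha_0-1$ depends only on the Calder\'on--Zygmund constant for $\Delta_\Sigma$ and is therefore independent of $k$, which is why a single $\alpha_0$ suffices throughout the inductive scheme.
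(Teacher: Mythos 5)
Your proof follows essentially the same scheme as the paper's: establish the base case $k=2$, $1<p<2$ by running the proofs of \autoref{lem:eps-regulairty} and \autoref{lem:eps-reg-A} on the scaled pair $(\hat A,\hat u)$, taking care of the $r$-dependent coefficients in $\hat\Phi_\alpha,\hat\Psi_\alpha$, and then bootstrap. The one place where you deviate in method is the treatment of the $r$-dependent nonlinearity in $\hat\Psi_\alpha$: the paper exploits the change-of-variables identity $r^2(1+r^{-2}|\nabla_{\hat A}\hat u|^2)^{\alpha-1}\langle\nabla_{\hat A}\hat u,\hat u\rangle = r^3(1+|\nabla_A u|^2)^{\alpha-1}\langle\nabla_A u,u\rangle$ and applies H\"older's inequality directly on $U_r$ in the original variables, whereas you use the pointwise bound $r^2(1+r^{-2}|\nabla_{\hat A}\hat u|^2)^{\alpha-1}\leq(1+|\nabla_{\hat A}\hat u|^2)^{\alpha-1}$ and then invoke the previous argument in the scaled variables. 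Both work, but your remark that "the energy bound transports without loss under the scaling since the Dirichlet and curvature $L^2$-energies are conformally invariant" is slightly imprecise: the $\alpha$-Dirichlet term $\int_U(1+|\nabla_{\hat A}\hat u|^2)^\alpha$ is not conformally invariant for $\alpha>1$ and needs the elementary inequality $(1+r^2t)^\alpha\leq 2^{\alpha-1}(1+r^{2\alpha}t^\alpha)$ together with $r<1$ to be bounded by $C(\Lambda,|U|)$, while the $\mu$-term $\|\mu(\hat u)\|_{L^2(U)}^2=r^{-2}\|\mu(u)\|_{L^2(U_r)}^2$ scales unfavorably and is controlled instead by the compactness of $F$. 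These are easy fixes but should be stated; the paper's route via the unscaled variables sidesteps them. Your description of the bootstrap and the treatment of the $(\alpha-1)$-perturbation of $\Delta_\Sigma$ matches the paper's sketch.
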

 \begin{proof}
   Recall that harmonic maps are scaling invariant in dimension 2, although our coupled equation is not scaling invariant anymore, it behaves well under scaling. We only prove the case $1<p<2$ and $k=2$, the general case follows from bootstrap argument as illustrated at the end of~\autoref{sec:smoothness}.

   For the estimate of $\hat u$, the proof is almost the same as~\autoref{lem:eps-regulairty}. Note that
   \begin{gather*}
     \left\lvert \frac{\left\langle \nabla_{\hat A}^2\hat u,\nabla_{\hat A}\hat u \right\rangle\nabla_{\hat A}\hat u}{r^2+\lvert \nabla_{\hat A}\hat u \rvert^2} \right\rvert \leq C\lvert \nabla_{\hat A}^2\hat u \rvert,\\
     \left\lvert r^2 \frac{\nabla\mu(\hat u)(\mu(\hat u))}{\left( 1+r^{-2}\lvert \nabla_{\hat A}\hat u \rvert^2 \right)^{\alpha-1}} \right\rvert\leq C(\lVert \mu \rVert_{L_1^\infty}).
   \end{gather*}
   Multiplying the equation of $\hat u$ by the cutoff function defined in~\autoref{lem:eps-regulairty} (note that $\lvert \nabla \eta \rvert\leq C(\mathrm{diam}(U))$), it is easy to show
   \begin{align*}
     \Delta_\Sigma(\eta\hat u)&\leq C\left( \eta\lvert \Delta_\Sigma\hat u \rvert+\lvert d\hat u \rvert+\lvert \hat u \rvert \right)\\
                              &\leq C\left( \eta(\alpha-1)\lvert \nabla_{\hat A}^2\hat u \rvert+ \eta\lvert \hat\Phi_\alpha(\hat A,\hat u) \rvert\lvert du \rvert+\lvert u \rvert  \right)\\
                              &\leq C\left( (\alpha-1)\lvert d^2(\eta\hat u) \rvert+\lvert d(\eta\hat u) \rvert\lvert d\hat u \rvert+\lvert d\hat A\hat u \rvert+\lvert \hat Ad\hat u \rvert +\lvert d\hat u \rvert+\lvert \hat u \rvert +1\right)\\
                              &\leq C\left[ (\alpha-1)\lvert d^2(\eta\hat u) \rvert+\lvert d(\eta\hat u) \rvert\lvert \nabla_{\hat A}\hat u \rvert+\lvert \nabla_{\hat A}\hat u \rvert\lvert \hat A \rvert
                              +\lvert \hat A \rvert+\lvert \hat A^2 \rvert+\lvert d\hat A \rvert+\lvert d\hat u \rvert+\lvert \hat u \rvert +1 \right],
   \end{align*}
   where $C$ is a constant depending on $\mathrm{diam}(U),\Lambda,F,\lVert \mu \rVert_{L_1^\infty}$. Thus, we can control $\Delta_\Sigma(\eta \hat u)$ as in~\autoref{lem:eps-regulairty} and show the required estimate.

   Next, we prove the required estimate for $\hat A$. The proof is almost the same as~\autoref{lem:eps-reg-A}, by noting that
   \begin{align*}
     r^2\left(1+r^{-2}\lvert \nabla_{\hat A}\hat u \rvert^2\right)^{\alpha-1}\left\langle \nabla_{\hat A}\hat u,\hat u \right\rangle
  &=r^3\left( 1+\lvert \nabla_Au \rvert^2 \right)^{\alpha-1}\left\langle \nabla_Au,u \right\rangle,\\
  \lVert r^2\left(1+r^{-2}\lvert \nabla_{\hat A}\hat u \rvert^2\right)^{\alpha-1}\left\langle \nabla_{\hat A}\hat u,\hat u \right\rangle \rVert_{L^{\alpha^*}(U)}
  &\leq Cr^3\lVert \left(1+\lvert \nabla_Au \rvert^2\right)^{\alpha} \rVert_{L^1(U_r)}^{\frac{\alpha-1}{\alpha}}\lVert \nabla_Au \rVert_{L^2(U_r)}\\
  &\leq C\cdot\Lambda^{\frac{\alpha-1}{\alpha}}\cdot\lVert\nabla_{\hat A}\hat u \rVert_{L^2(U)}.
   \end{align*}
 \end{proof}
 \subsection{Removal of singularity for approximated harmonic maps}
 The following lemma is an extension of the classical removable singularity theorem for harmonic maps (see~\citelist{\cite{SacksUhlenbeck1981existence}*{Thm.~3.6}\cite{Fraser2000free}*{Thm.~1.10}}), which will be applied to the weak limit in the blow-up process to show that the isolated singularities are all removable. The proof given here is based on the regularity theorem of weak solution instead, comparing to the classical method involving energy decay estimates \citelist{\cite{SacksUhlenbeck1981existence}\cite{Fraser2000free}}. Here we only state the boundary version, the interior case can be found in~\cite{Song2011CriticalYangMillsHiggs}*{Thm.~4.3}.
 \begin{lem}[\cite{JostLiuZhu2016qualitative}*{Thm.~3.6}]\label{lem:removable-singularity}
   Suppose $u$ is a $L_{2,\mathrm{loc}}^2$-map from a neighborhood $U^\circ \mathpunct{:}=U\setminus\left\{ 0 \right\}$ of $0\in\partial^0U$ to $F$ with finite Dirichlet energy and satisfies the following equation weakly under Fermi coordinates (see also~\eqref{eq:loc})
   \begin{equation}\label{eq:u-removable-singularity}
     \begin{cases}
       \Delta_\Sigma u-\Gamma(u)(du,du)=f\in L^p(U^\circ),&x\in U^\circ\\
       \frac{\partial u^a}{\partial \nu}=0,\quad a=1,2,\ldots,k,&x\in\partial\Sigma\cap U^\circ\\
       u^a=0,\quad a=k+1,\ldots,n,&x\in\partial\Sigma\cap U^\circ,
     \end{cases}
   \end{equation}
   for some $p\geq2$. Then $u$ can be extended to a $L_{2,\mathrm{loc}}^p$-map over $U$ and it preserves the free boundary condition.
 \end{lem}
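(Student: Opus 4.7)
The strategy is threefold: extend $u$ weakly across the isolated singularity, reduce the free-boundary problem to an interior one via the reflection of \autoref{sec:smoothness}, and invoke the known interior removable-singularity theorem together with a bootstrap.

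Since $u \in L_1^2(U \setminus \{0\})$ has finite Dirichlet energy and a single point has zero $H^1$-capacity in dimension two, $u$ defines a genuine element of $L_1^2(U)$. To verify that the weak form of \eqref{eq:u-removable-singularity} still holds on all of $U$, I test against $\eta_\epsilon \xi$, where $\xi$ is any admissible test function compatible with the Dirichlet part of the free boundary condition, and $\eta_\epsilon$ is the standard logarithmic cutoff with $\eta_\epsilon = 0$ on $B_\epsilon$, $\eta_\epsilon = 1$ off $B_{2\epsilon}$, and $\|d\eta_\epsilon\|_{L^2(U)} \to 0$. Using $|du|^2 \in L^1(U)$ and $f \in L^p \hookrightarrow L^1(U)$, the cross terms involving $d\eta_\epsilon$ vanish in the limit, so $u$ satisfies the equation weakly on all of $U$ while retaining the boundary conditions.

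Next, I apply the doubling construction from \autoref{sec:smoothness}, specialized to a trivial connection, reflecting $u$ across $\partial^0 U$: the Neumann components $u^a$ with $a \in \I_1$ by even reflection and the Dirichlet components with $a \in \I_2$ by odd reflection. This yields $\tilde u \in L_1^2(B_\rho)$ with finite Dirichlet energy, and following the parity bookkeeping in the proof of \autoref{lem:bdry-reg}, one verifies that $\tilde u$ weakly solves an interior equation of the form $\Delta \tilde u = \tilde\Gamma(\tilde u)(d\tilde u, d\tilde u) + \tilde f$ on $B_\rho$, where $\tilde\Gamma$ is bounded (the reflected Christoffel symbols) and $\tilde f \in L^p(B_\rho)$. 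The puncture at $0$ becomes a single interior singularity.

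The interior removable-singularity theorem for weakly harmonic maps with $L^p$ perturbation, applied to $\tilde u$ on $B_\rho \setminus \{0\}$, yields a continuous extension of $\tilde u$ across $0$ together with $\tilde u \in L_{2,\loc}^2(B_\rho)$. Once continuity is in hand, a standard bootstrap via the Sobolev multiplication $L_1^2 \otimes L_1^2 \hookrightarrow L_1^q$ (for some $q<2$) and elliptic $L^q$-theory promotes $\tilde u$ to $L_{2,\loc}^p(B_\rho)$, up to the exponent permitted by $\tilde f$. Restricting back to $U$ gives $u \in L_{2,\loc}^p(U)$, with the free boundary conditions preserved by construction. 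The main obstacle is the $L^1$-critical nonlinearity $\Gamma(u)(du,du)$ in dimension two, which is precisely what the interior removable-singularity theorem, via Wente-type estimates, is designed to handle; the remaining bootstrap is then routine.
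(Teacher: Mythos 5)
The paper does not prove this lemma: it is stated with the attribution \cite{JostLiuZhu2016qualitative}*{Thm.~3.6}, and the only hint given is that the cited proof relies on a weak-solution regularity argument rather than the classical energy-decay method of Sacks--Uhlenbeck and Fraser. There is therefore no in-paper proof against which to compare your sketch line by line; I can only assess it on its own terms.

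Your three-step strategy --- capacity argument to remove the puncture at the weak level, reflection across $\partial^0 U$ to convert the free-boundary problem into an interior one, and then interior regularity together with a bootstrap --- is a sound route, and it is consistent in spirit with the ``regularity of weak solutions'' approach the paper attributes to the reference. The capacity step is handled correctly: $|du|^2$ and $f$ are in $L^1$, and the cross term $\int du\cdot d\eta_\eps\,\xi$ is controlled by $\lVert du\rVert_{L^2}\lVert d\eta_\eps\rVert_{L^2}\lVert\xi\rVert_{L^\infty}\to 0$. One small logical redundancy: once the capacity argument shows $u$ (and hence $\tilde u$) is a weak solution on the full disc, you no longer need to ``remove'' anything for $\tilde u$ --- you just need interior regularity for a weak solution on $B_\rho$, which is the content of \autoref{lem:Lp-Dirichlet-Neumann} (Sharp--Zhu) applied in the form $\Delta\tilde u+\Omega\cdot\nabla\tilde u=\tilde f$.

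The one genuine point you should not gloss over is the anti-symmetric structure after reflection. Your bootstrap (and the Wente/Rivi\`ere machinery you invoke) requires writing the reflected equation as $\Delta\tilde u=\Omega\cdot\nabla\tilde u+\tilde f$ with $\Omega\in L^2$ anti-symmetric. The reflected Christoffel symbols $\tilde\Gamma$ are only $L^\infty$ (Lipschitz metric), so this is not automatic from the smooth theory; it is exactly the verification carried out in \cite{SharpZhu2016Regularity}, which is why the paper appeals to that result in \autoref{lem:Lp-Dirichlet-Neumann}. You should cite that the reflected nonlinearity admits such a potential, or simply appeal directly to \autoref{lem:Lp-Dirichlet-Neumann}, which packages this: it gives $L_2^p$-regularity of $u$ on the half-disc directly from the weak equation under the mixed Dirichlet--Neumann boundary conditions, without you having to redo the reflection bookkeeping. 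With that reference in place, your argument closes.
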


 \section{Convergence and blow-up}\label{sec:blowup}
 The following bubbling convergence argument is almost standard, the main difference is the possible phenomenon of boundary blow-ups.

 \begin{proof}[Proof of~\autoref{thm:blowup}]
   Let $\alpha_0$ be the same constant in \autoref{lem:eps-regulairty}. Suppose $\left\{ x_1,\ldots,x_L \right\}\subset \mathcal{S}$. By the definition of $\mathcal{S}$, for $r>0$ small enough such that $\left\{ U_r(x_j) \right\}_{j=1}^L$ are mutually disjoint and for all but finite many $\left\{ \alpha \right\}$, we have
   \[
     \int_{U_r(x_j)}\lvert \nabla_{A_{\alpha}}\phi_{\alpha} \rvert^2\geq\epsilon_0/2.
   \]
   Summing over $j=1,\ldots,L$, we see that
   \[
     +\infty>\Lambda\geq \mathcal{L}_{\alpha}(A_{\alpha},\phi_{\alpha})\geq\sum_{j=1}^L\int_{U_{r}(x_j)}\lvert \nabla_{A_{\alpha}}\phi_{\alpha} \rvert^2\geq L\epsilon_0/2,
   \]
   which clearly implies the finiteness of $\mathcal{S}$.

   To show the strong convergence over regular points in $\Sigma\setminus \mathcal{S}$, we note first that, by the remark after~\autoref{lem:eps-regulairty}, there exists $r_0>0$ independent of $\alpha$, such that $A_\alpha$ is in Coulomb gauge over $U_r$, provided that $r\leq r_0$.
   Then~\autoref{lem:eps-reg-A} implies, for $1<p<2$, there exists $\alpha(p)>1$, such that for any $1<\alpha<\alpha(p)$, $\lVert A_\alpha \rVert_{L_2^p(U_r,\Omega^1(\mathfrak{g}))}$ are uniformly bounded. Next, we show the $C^0$ convergence of $A_\alpha\to A_\infty$. For that purpose, covering $\Sigma$ with discs or half-discs with radius less than $r_0/2$, denote them by $\left\{ U_{i} \right\}$. The above discussion shows that under some local trivialization $\left\{ \sigma_{\alpha,i}\mathpunct{:}\mathcal{P}|_{U_i}\to U_i\times G \right\}$, if we write $A_\alpha$ locally as $d+A_{\alpha,i}$, then for any $U_i$, any $1<p<2$ and any $1<\alpha<\alpha(p)$,
   \begin{equation}\label{eq:2p-bdd-A}
     \lVert A_{\alpha,i} \rVert_{L_2^p(U_i)}\leq C(F,\Lambda,p).
   \end{equation}
   Thus, we can assume that $A_{\alpha,i}\to A_i$ weakly in $L_2^p(U_i)$ and strongly in $C^0(U_i)$ as $\alpha\to1$.
   \begin{claim}
     $\left\{ A_i \right\}$ represents a $L_2^p$ connection $A_\infty$ on $\mathcal{P}$, i.e., $A_i\in L_2^p(U_i,\Omega^1(\mathfrak{g}))$ and there exist transition functions $\left\{ \tau_{ij}\in L_3^p(U_{ij},G) \right\}$, $U_{ij}\mathpunct{:}=U_i\cap U_j$, such that
     \[
       A_j=\tau_{ij}^*A_i=\tau^{-1}_{ij}d\tau_{ij}+\tau^{-1}_{ij}A_i\tau_{ij}.
     \]
   \end{claim}
   In fact, on any $U_{ij}\neq\emptyset$, we have transition functions $\left\{ \tau_{\alpha,ij}\mathpunct{:}U_{ij}\to G \right\}$ such that $\pi_2\circ\sigma_{\alpha,i}=\tau_{\alpha,ij}\circ\pi_2\circ\sigma_{\alpha,j}$, where $\pi_2$ is the projection to the second component. $\left\{ A_{\alpha,i} \right\}$ transform as
   \[
     A_{\alpha,j}=\tau_{\alpha,ij}^{-1}d\tau_{\alpha,ij}+\tau_{\alpha,ij}^{-1}A_{\alpha,i}\tau_{\alpha,ij}\Longleftrightarrow
     d\tau_{\alpha,ij}=\tau_{\alpha,ij}A_{\alpha,j}-A_{\alpha,i}\tau_{\alpha,ij}.
   \]
   Since $G$ is compact,~\eqref{eq:2p-bdd-A} and the above relation imply that, for $p^*=2p/(2-p)$,
   \begin{align*}
     \lVert d\tau_{\alpha,ij} \rVert_{L^{p^*}(U_{ij})}&\leq C(G)\left( \lVert A_{\alpha,i} \rVert_{L^{p^*}(U_i)}+\lVert A_{\alpha,j} \rVert_{L^{p^*}(U_j)} \right)\\
                                                      &\leq C(G)\left( \lVert A_{\alpha,i} \rVert_{L_1^p(U_i)} + \lVert A_{\alpha,j} \rVert_{L_1^p(U_j)}  \right)\\
                                                      &\leq C(G,F,\Lambda,p).
   \end{align*}
   Since $\tau_{\alpha,ij}\in L^\infty(U_{ij})$ (because $G$ is compact), by employing the Sobolev multiplication theorems $L_2^p\times L_1^{p^*}\to L_1^p$ and $L_2^p\times L_2^p\to L_2^p$ (see~\cite{Wehrheim2004Uhlenbeck}*{Lem.~B.3}), we obtain the $L_3^p(U_{ij})$-uniform boundedness of $\left\{ \tau_{\alpha,ij} \right\}$. By weak compactness, we may assume that $\tau_{\alpha,ij}$ converges to some $\tau_{ij}$ weakly in $L_3^p(U_{ij})$ and strongly in $C^0(U_{ij})$ as $\alpha\to1$. It is clear that the co-cycle condition $\tau_{ik}=\tau_{ij}\circ\tau_{jk}$ is preserved and hence $\left\{ \tau_{ij} \right\}$ defines a bundle isomorphic to $\mathcal{P}$. Moreover, the relation is preserved under weak limits,
   \[
     d\tau_{ij}=\tau_{ij}A_j-A_i\tau_{ij}\Longleftrightarrow A_j=\tau_{ij}^*A_i.
   \]
   Thus, $\left\{ A_i \right\}$ represents a connection $A_\infty\in \mathscr{A}_2^p$ on $\mathcal{P}$. This finishes the proof of the claim.

   We should remark that the local convergence $A_{\alpha,i}\to A_i$ in $C^0(U_i)$ depends on the choice of trivialization $\sigma_{\alpha,i}$ and we cannot assert $A_\alpha\to A_\infty$ in $C^0$ directly. But we can apply the patching argument similar to the weak compactness of Yang--Mills connections (see~\cite{Uhlenbeck1982Connections}*{Thm.~3.6}) to show that there exist gauge transformations $\left\{ S_\alpha \right\}\subset \mathscr{G}_3^p$, such that $S_\alpha^*A_\alpha\to A_\infty$ strongly in $C^0$ sense. That is, $A_\alpha\to A_\infty$ in $C^0(\Sigma)$ modulo gauge. Since $\mathcal{L}(A_\alpha,\phi_\alpha)$ is gauge invariant, we will identify $S_\alpha^*A_\alpha$ and $S_\alpha^*\phi_\alpha$ with $A_\alpha$ and $\phi_\alpha$ hereafter.

   To show the strong convergence of sections $\left\{ \phi_\alpha \right\}$ over $\Sigma\setminus \mathcal{S}$, we note first that,  the above argument can be started with any cover with radii are less than $r_0/2$. Now, by the definition of regular set, for any $x\in\Sigma\setminus \mathcal{S}$, there exist $r^0\in(0,r_0)$ and $\alpha^0\in(0,\alpha_0)$, such that for any $U_r(x)\subset \Sigma$ centered at $x$ with radius $r\leq r^0$, we have
   \[
     \int_{U_r(x)}\lvert \nabla_{A_\alpha}\phi_\alpha \rvert^2<\epsilon_0,\quad\forall 1<\alpha\leq\alpha^0.
   \]
   Since by the choice of $U_r(x)$, $r<r^0<r_0$, we can assume that $A_{\alpha}$ is in Coulomb gauge with estimate~\eqref{eq:2p-bdd-A} over $U\subset U_r(x)$. If we denote the corresponding local trivialization by $\sigma_{\alpha}$ and write
   \[
     \phi_{\alpha}(x)=\sigma_{\alpha}\circ\phi_\alpha(x)=(x,u_{\alpha}(x)),
   \]
   then~\autoref{lem:eps-regulairty} implies that, for any $1<p\leq 2$,
   \[
     \lVert u_{\alpha} \rVert_{L_2^p(U')}\leq C(U, U',F,\Lambda,\lVert \mu \rVert_{L_1^\infty(F)},p,\alpha_0,\epsilon_0),\quad U'\subset\!\subset U.
   \]
   Since $(A_{\alpha},u_{\alpha})$ satisfies~\eqref{eq:A-u-strong-Fermi}, we can bootstrap the regularity as in the proof of smoothness of critical points of $\alpha$-YMH functional (see the end of~\autoref{sec:smoothness}) and conclude that $\left\{ A_\alpha \right\}$ converges to $A_\infty$ in $C^\infty(U')$ and $\left\{ u_{\alpha} \right\}$ converges to some $u$ in $C^\infty(U')$ as $\alpha\to1$. By the arbitrariness of $x\in \Sigma\setminus \mathcal{S}$, we can construct a cover $\left\{ U_i' \right\}$ of $\Sigma\setminus \mathcal{S}$ and local trivializations $\sigma_{\alpha,i}$, such that for any $i$, $A_{\alpha,i}\to A_i$ in $C^\infty(U_i')$ and $u_{\alpha,i}\to u_i$ in $C^\infty(U_i')$ as $\alpha\to1$. Since the consistence condition $ u_j=\tau_{ij}u_i $ is preserved on each $U_i'\cap U_j'$, $\left\{ u_i \right\}$ represents a section $\phi_\infty\in \mathscr{S}_K$ over $\Sigma\setminus \mathcal{S}$. A patching argument as before shows that, there exists some $S_\alpha\in C_{\mathrm{loc}}^\infty(\Sigma\setminus \mathcal{S})$, such that $S_\alpha^*\phi_\alpha\to \phi_\infty$ and $S_\alpha^*A_\alpha\to A_\infty$ in $C_{\mathrm{loc}}^\infty(\Sigma\setminus \mathcal{S})$. Clearly, by taking $\alpha\to1$ in~\eqref{eq:A-u-strong-Fermi}, $\phi_\infty$ satisfies the first equation of~\eqref{eq:loc} locally and the corresponding boundary condition over some neighborhood $U\setminus\left\{ x \right\}$, $x\in \mathcal{S}$. The removal of regularity theorem (see~\autoref{lem:removable-singularity}) asserts that $\phi_\infty$ extends to a smooth section over $\Sigma$ and we finish the first part of the theorem.

   To show the second part, without loss of generality, suppose that $x_0=0\in \mathcal{S}\cap\partial\Sigma$ and $U=U_1$ (unit half disc) is a neighborhood of origin such that it is the unique isolated singularity in $U$. Let $\sigma_\alpha$ be a local trivialization over $U$ and $\left\{ u_\alpha \right\}$ be the local representation of $\left\{ \phi_\alpha \right\}$ as before. Without loss of generality, we may assume that the radius of $U$ is less than $r_0$, such that $A_\alpha$ is in Coulomb gauge. Set
   \[
     1/r_\alpha=\max_{U}\lvert \nabla_{A_\alpha}u_\alpha \rvert=\lvert \nabla_{A_\alpha}u_\alpha \rvert(x_\alpha),
   \]
   and let $\lambda_{r_\alpha} \mathpunct{:}x\mapsto x_\alpha+r_\alpha x$ be the scaling mapping. We already shown that the pullback connection and pullback section are locally given by (see~\eqref{eq:A-u-scale})
   \begin{align*}
     \hat A_\alpha(x)&\mathpunct{:}=\lambda_{r_\alpha}^*A_\alpha(x)=r_\alpha A_\alpha(x_\alpha+r_\alpha x),\\
     \hat u_\alpha(x)&\mathpunct{:}=\lambda_{r_\alpha}^*u_\alpha(x)=u_\alpha\circ \lambda_{r_\alpha}(x)=u_\alpha(x_\alpha+r_\alpha x).
   \end{align*}
   The following blow-up argument is standard, and we summarize it in the following claim as a complement.
   \begin{claim}
     With the above notations and assumptions, we have
     \begin{enumerate}
       \item $r_\alpha\to0$ as $\alpha\to1$;\label{item:blowup:a}
       \item $x_\alpha\to 0$ as $\alpha\to1$;\label{item:blowup:b}
       \item Define $(\hat A_\alpha, \hat u_\alpha)$ as above, then there are two cases, where harmonic spheres and harmonic discs split off respectively.\label{item:blowup:c}
         \begin{itemize}
           \item Harmonic spheres: $\mathrm{dist}(x_\alpha,U\cap\partial\Sigma)/r_\alpha\to\infty$;
           \item Harmonic discs: $\mathrm{dist}(x_\alpha,U\cap\partial\Sigma)/r_\alpha\to\rho<+\infty$.
         \end{itemize}
     \end{enumerate}
   \end{claim}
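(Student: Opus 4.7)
The plan is to use the $\varepsilon$-regularity machinery of the previous section to obtain (a) and (b) by contradiction, then to extract a smooth limit of the rescaled pair $(\hat A_\alpha,\hat u_\alpha)$ via \autoref{cor:eps-regularity-scaled} to produce the bubble in (c).

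For (a), if $r_\alpha\not\to 0$ along a subsequence, then $|\nabla_{A_\alpha}u_\alpha|\leq 1/r_\alpha$ is uniformly bounded on $U$, giving $\int_{U_r(x_0)}|\nabla_{A_\alpha}u_\alpha|^2\leq Cr^2$. Choosing $r$ small so that $Cr^2<\varepsilon_0$ contradicts $x_0\in\mathfrak{S}$. For (b), if $x_\alpha\to x'\neq x_0$ along a subsequence, then $x'\notin\mathfrak{S}$, so by the definition of the blow-up set one has $\int_{U_\delta(x')}|\nabla_{A_\alpha}u_\alpha|^2<\varepsilon_0$ for some $\delta>0$ and all large $\alpha$. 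Then \autoref{lem:eps-regulairty} together with \autoref{lem:eps-reg-A} and the bootstrap at the end of \autoref{sec:smoothness} give uniform $C^k$-bounds on $(A_\alpha,u_\alpha)$ on a neighborhood of $x'$; in particular $|\nabla_{A_\alpha}u_\alpha|(x_\alpha)$ stays bounded, contradicting $r_\alpha\to 0$.

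For (c), the core step is to pass to the limit in the rescaled system \eqref{eq:scale}. By construction $\sup|\nabla_{\hat A_\alpha}\hat u_\alpha|=|\nabla_{\hat A_\alpha}\hat u_\alpha|(0)=1$, and conformal invariance of the Dirichlet energy gives $\|\nabla_{\hat A_\alpha}\hat u_\alpha\|_{L^2(U_\alpha)}^2\leq\Lambda$ on the rescaled domain $U_\alpha:=r_\alpha^{-1}(U-x_\alpha)$. As in the first paragraph of the present proof, at most finitely many points of $U_\alpha$ can carry $\varepsilon_0$-concentration in the limit, and away from them \autoref{cor:eps-regularity-scaled} yields uniform $C^k_{\mathrm{loc}}$ bounds. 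Depending on whether $d_\alpha:=\mathrm{dist}(x_\alpha,\partial\Sigma)/r_\alpha\to\infty$ or $d_\alpha\to\rho<\infty$, the exhausting domains $U_\alpha$ fill either $\mathbb{R}^2$ or the half-plane $\{y_2\geq -\rho\}$. Crucially, every term in \eqref{eq:scale} coupling $\hat A$ to $\hat u$ or involving $\mu$ carries an explicit positive power of $r_\alpha$, so the limit connection $\hat A_\infty$ is flat and gauge-trivial, while $\hat u_\infty$ solves the free-boundary harmonic map equation with $|\nabla\hat u_\infty|(0)=1$ and is therefore nontrivial. In the first case, finite energy and the interior removable-singularity theorem extend $\hat u_\infty$ across the concentration points and across $\infty$ to a harmonic sphere $S^2\to F$; in the second, conformally mapping the half-plane to $B$ produces a harmonic disc with free boundary on $K$, since the Fermi boundary conditions in \eqref{eq:scale} persist in the limit.

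The main obstacle is the careful scaling analysis of \eqref{eq:scale}: one must exhibit precisely the positive powers of $r_\alpha$ accompanying each coupling term to see that the system decouples, so that $\hat u_\infty$ is an honest harmonic map rather than a YMH section, and one must check that the split boundary conditions (Neumann for components in $\I_1$, Dirichlet for components in $\I_2$) pass to the limit in the half-plane case. An iterative bubble-tree extraction at the remaining concentration points, together with an energy-identity argument, would then complete part (2) of \autoref{thm:blowup}, though for the claim as stated it suffices to produce a single nontrivial bubble of the correct type.
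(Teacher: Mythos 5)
Your proof is correct and follows essentially the same route as the paper: parts (a) and (b) by contradiction using $\varepsilon$-regularity, and part (c) by extracting a smooth limit of the rescaled pair and invoking the removable-singularity theorem. One place where your argument is thinner than the paper's is the claim that ``the limit connection $\hat A_\infty$ is flat and gauge-trivial.'' Decoupling (positive powers of $r_\alpha$ in the interaction terms) only tells you what equation $\hat A_\infty$ solves; it does not by itself imply flatness, let alone triviality. The paper handles this cleanly and directly: since $\hat A_\alpha$ is in Coulomb gauge, \autoref{thm:coulomb}\eqref{item:coulomb-est} together with the curvature scaling $F_{\hat A_\alpha}(x)=r_\alpha^2 F_{A_\alpha}(x_\alpha+r_\alpha x)$ gives
\[
\lVert \hat A_\alpha \rVert_{L_1^2(U_{1/(2r_\alpha)})}\leq C\lVert F_{\hat A_\alpha}\rVert_{L^2(U_{1/(2r_\alpha)})}=Cr_\alpha\lVert F_{A_\alpha}\rVert_{L^2(U_{1/2})}\to 0,
\]
so $\hat A_\alpha\to 0$ outright, and the limit equation for $\hat u_\infty$ is the (free-boundary) harmonic map equation with no further gauge argument needed. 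You should cite this estimate rather than leaning on the decoupling heuristic. A second, more cosmetic remark: because the rescaling normalizes $\sup|\nabla_{\hat A_\alpha}\hat u_\alpha|=1$, there is no energy concentration for the first rescaled sequence on compact sets, so your mention of ``at most finitely many concentration points'' is not needed for producing the first bubble; it only becomes relevant in the subsequent bubble-tree iteration, which you correctly identify as beyond the scope of the stated claim.
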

   If~\eqref{item:blowup:a} is not true, then $\lVert \nabla_{A_\alpha}u_\alpha \rVert_{L^\infty(U)}$ are uniformly bounded. This contradicts the fact that $x_0=0$ is a singularity of $\left\{ u_\alpha \right\}$ in $U$.

   For~\eqref{item:blowup:b}, suppose that $x_\alpha\to x^0\neq 0$ as $\alpha\to1$, then since $x^0$ is a regular point, there exist $\delta>0$ and $\alpha^0\in(1,\alpha_0)$, such that we can apply~\autoref{lem:eps-regulairty} and~\autoref{lem:eps-reg-A} to show that,
   \[
     \frac{1}{r_\alpha}=\lvert \nabla_{A_\alpha}u_\alpha \rvert(x_\alpha)\leq\lVert \nabla_{A_\alpha}u_\alpha \rVert_{L^\infty(U_\delta(x^0))}\leq C<+\infty,
   \]
   take $\alpha\to1$ we see that it contradicts ~\eqref{item:blowup:a}.

   For~\eqref{item:blowup:c}, we only show the case of splitting-off of harmonic discs with free boundary, because the harmonic sphere case can be derived in a very similar way. Firstly, we can take a proper coordinate system with origin at $x_0=0$ and $x_1$-axis pointing to the interior of $\Sigma$, $x_2$-axis tangent to $\partial\Sigma$ at $0$. The scaled maps $(\hat A_\alpha,\hat u_\alpha)$ satisfy~\eqref{eq:scale} with $r$ replaced by $r_\alpha$. Since $\nabla_{\hat A_\alpha}\hat u_\alpha(x)=r_\alpha\nabla_{A_\alpha}u_\alpha(x_\alpha+r_\alpha x)$,
   \[
     \lVert \nabla_{\hat A_\alpha}\hat u_\alpha \rVert_{L^\infty(U_{1/r_\alpha})}=r_\alpha\lVert \nabla_{A_\alpha}u_\alpha \rVert_{L^\infty(U)}=1,
   \]
   by the choice of $r_\alpha$, we can apply~\autoref{cor:eps-regularity-scaled} on each $\tilde{U}\subset U_{1/(2r_\alpha)}$ to $(\hat A,\hat u_\alpha)$ and show that, for $k=1,2,\ldots$,
   \begin{align*}
     \lVert \hat u_\alpha \rVert_{C^k(U_{1/(2r_\alpha)})}&\leq C(\mathrm{diam}(U),F,\Lambda,\lVert \mu \rVert_{L_1^\infty(F)},\alpha_0,\epsilon_0,k),\\
     \lVert \hat A_\alpha \rVert_{C^k(U_{1/(2r_\alpha)})}&\leq C(\mathrm{diam}(U),F,\Lambda,k).
   \end{align*}
   Moreover, since $\hat A_\alpha$ is in Coulomb gauge, by \autoref{thm:coulomb},
   \[
     \lVert \hat A_\alpha \rVert_{L_1^2(U_{1/(2r_\alpha)})}\leq C\lVert F_{\hat A_\alpha} \rVert_{L^2(U_{1/(2r_\alpha)})}=Cr_\alpha\lVert F_{A_\alpha} \rVert_{L^2(U_{1/2})}\to0.
   \]
   Therefore, we obtain the following strong convergence in $C_{\mathrm{loc}}^\infty(\mathbb{R}^2_{\rho,+})$, where the right half plane $\mathbb{R}_{\rho,+}^2 \mathpunct{:}=\left\{ x=(x_1,x_2):x_1>-\rho \right\}$,
   \[
     \hat A_\alpha\to 0,\quad\hat u_\alpha\to w.
   \]
   Clearly, the equation of $w$ is given by
   \[
     \begin{cases}
       \Delta_\Sigma w-\Gamma(w)(dw,dw)=0,&x\in \mathbb{R}^2_{\rho,+}\\
       \frac{\partial w^a}{\partial x_1}=0,\quad a=1,2,\ldots,k,&x_1=-\rho\\
       w^a=0,\quad a=k+1,\ldots,n,&x_1=-\rho.
     \end{cases}
   \]
   By the removal of singularity theorem for harmonic maps (see~\autoref{lem:removable-singularity}) and the conformal invariance of $w$, $w$ extends to a harmonic map on the disc $B$ with free boundary $w(\partial B)\subset K$. Therefore, at each singularity $x_0\in \mathcal{S}\cap\partial\Sigma$, we obtain a harmonic disc or a harmonic sphere, which is called a bubble. This finishes the proof of \autoref{thm:blowup}.
 \end{proof}
 \appendix
 \section{Some regularity results and estimates}\label{sec:bdry-regularity}
 It is well-known that for a weakly harmonic map $u$, the equation of $u$ has anti-symmetric structure $\Omega$ with $\lVert \Omega \rVert_{L^2}\leq C\lVert \nabla u \rVert_{L^2}$ and the following regularity and estimate hold.
 \begin{lem}[see~\cite{SharpZhu2016Regularity}*{Thm.~1.2}]\label{lem:Lp-Dirichlet-Neumann}
   Suppose $u\in L_1^2(D_1,\mathbb{R}^n)$ is a weak solution of
   \[
     \begin{cases}
       \Delta u+\Omega\cdot \nabla u=f\in L^p(D_1,\mathbb{R}^n),&x\in D_1\\
       \frac{\partial u^a}{\partial\nu}=g^a\in L_{1,\partial}^p(\partial^0D_1,\mathbb{R}^n),&x\in\partial^0D_1,\quad 1\leq a\leq k\\
       u^a=h^a\in L_{2,\partial}^p(\partial^0D_1,\mathbb{R}^n),&x\in\partial^0D_1,\quad k+1\leq a\leq n,
     \end{cases}
   \]
   where $\Omega\in L^2(D_1,\mathfrak{so}(n)\times\wedge^1\mathbb{R}^2)$, $1<p<2$ and boundary Sobolev space is defined as
   \[
     L_{k,\partial}^p(\partial^0D_1)\mathpunct{:}=\left\{ f\in L^1(\partial^0D_1):f=\tilde{f}|_{\partial^0D_1},\,\tilde{f}\in L_k^p(D_1) \right\}
   \]
   with norm
   \[
     \lVert f \rVert_{L_{k,\partial}^p(\partial^0D_1)}\mathpunct{:}=\inf_{\tilde{f}\in L_k^p(D_1),\tilde{f}|_{\partial^0D_1}=f}\lVert \tilde{f} \rVert_{L_k^p(D_1)}.
   \]
   Then, $u\in L_2^p(\overline{D_{1/2}},\mathbb{R}^n)$ and
   \[
     \lVert u \rVert_{L_2^p(D_{1/2},\mathbb{R}^n)}\leq C\left( \lVert f \rVert_{L^p(D_1,\mathbb{R}^n)}+\lVert g \rVert_{L_{1,\partial}^p(\partial^0D_1,\mathbb{R}^n)}+ \lVert h \rVert_{L_{2,\partial}^p(\partial^0D_1,\mathbb{R}^n)}+\lVert u \rVert_{L^1(D_1,\mathbb{R}^n)}\right),
   \]
   provided that $\lVert \Omega \rVert_{L^2(D_1)}\leq\eta_0=\eta_0(p,n)$.
 \end{lem}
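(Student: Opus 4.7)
The plan is to reduce the mixed boundary problem on $D_1$ to an interior Rivi\`ere-type system on the full disc $B_1$ via a parity-based reflection that preserves the antisymmetry of $\Omega$, and then to apply the standard interior $L^p_2$-regularity result of Rivi\`ere for systems with antisymmetric potentials.

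To homogenise the boundary data, choose a lifting $H\in L_2^p(D_1,\mathbb{R}^n)$ with $H^a|_{\partial^0 D_1}=h^a$ for $k+1\le a\le n$, $\partial_\nu H^a|_{\partial^0 D_1}=g^a$ for $1\le a\le k$, vanishing complementary components, and $\|H\|_{L_2^p(D_1)}\le C(\|g\|_{L_{1,\partial}^p}+\|h\|_{L_{2,\partial}^p})$; substituting $v:=u-H$ converts the system into the same equation with new datum $\tilde f:=f-\Delta H-\Omega\cdot\nabla H\in L^p$ and homogeneous mixed boundary conditions, leaving $\|\Omega\|_{L^2}$ untouched. Next set $\sigma_a=+1$ for $1\le a\le k$ and $\sigma_a=-1$ for $k+1\le a\le n$, and reflect across $\partial^0 D_1$: writing $x^*=(x_1,-x_2)$ for $x_2<0$, define
\[
  \tilde v^a(x):=\sigma_a v^a(x^*),\quad \tilde f^a(x):=\sigma_a f^a(x^*),\quad \tilde\Omega^{ab}_1(x):=\sigma_a\sigma_b\Omega^{ab}_1(x^*),\quad \tilde\Omega^{ab}_2(x):=-\sigma_a\sigma_b\Omega^{ab}_2(x^*),
\]
and extend by the original data on $D_1$. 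Even reflection of the Neumann components together with odd reflection of the Dirichlet components gives $\tilde v\in L_1^2(B_1,\mathbb{R}^n)$, and the identity $\sigma_a\sigma_b=\sigma_b\sigma_a$ makes antisymmetry $\Omega^{ab}=-\Omega^{ba}$ pass to $\tilde\Omega$; one also has $\|\tilde\Omega\|_{L^2(B_1)}^2=2\|\Omega\|_{L^2(D_1)}^2$. A chain-rule parity check term-by-term in the weak form then shows $\tilde v$ solves $\Delta\tilde v+\tilde\Omega\cdot\nabla\tilde v=\tilde f$ weakly on $B_1$.

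With the problem now on the full disc I would invoke Rivi\`ere's interior regularity for antisymmetric systems: since $\tilde\Omega$ is skew with small $L^2$-norm, there exist $P\in L_1^2(B_1,SO(n))$ and $\xi\in L_1^2(B_1,\mathfrak{so}(n))$ satisfying the Coulomb-type identity
\[
  \nabla P\cdot P^{-1}+P\tilde\Omega P^{-1}=\nabla^\perp\xi,\qquad \|\nabla P\|_{L^2}+\|\nabla\xi\|_{L^2}\le C\|\tilde\Omega\|_{L^2},
\]
so that the system rewrites in divergence form as $\operatorname{div}(P\nabla\tilde v)=-\nabla^\perp\xi\cdot P\nabla\tilde v+P\tilde f$. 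Hardy-space Wente-type estimates for the Jacobian-structured right-hand side, combined with Calder\'on--Zygmund theory, yield
\[
  \|\tilde v\|_{L_2^p(B_{1/2})}\le C\bigl(\|\tilde f\|_{L^p(B_1)}+\|\tilde v\|_{L^1(B_1)}\bigr)
\]
whenever $\|\tilde\Omega\|_{L^2(B_1)}\le\eta_0$. Restricting to $D_{1/2}$ and undoing the lifting $v=u-H$, together with the $L^p_2$-bound on $H$, produces the claimed inequality.

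The main obstacle is the reflection step: one must verify that skew-symmetry of $\Omega$ survives and that the weak equation persists across $\partial^0 D_1$ inside $B_1$. The mixed Dirichlet--Neumann structure is precisely what allows the parity factors $\sigma_a$ to be assigned consistently; without this pairing, cross-index terms $\Omega^{ab}$ with $a\le k<b$ would pick up incompatible signs under reflection and destroy the antisymmetry required for Rivi\`ere's theorem. Carrying out this bookkeeping on every term, and checking compatibility of the admissible test functions across the interface, is the crux of the argument; the subsequent appeal to Rivi\`ere's $L^p_2$-theory is by now essentially a citation of Sharp--Zhu.
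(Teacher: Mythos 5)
The paper gives no proof of this lemma---it is quoted verbatim from Sharp--Zhu~\cite{SharpZhu2016Regularity}*{Thm.~1.2}---and your strategy (homogenise the data, reflect with parity signs $\sigma_a$ chosen by boundary type so that the symmetric factor $\sigma_a\sigma_b$ preserves the skew-symmetry of $\Omega$, then invoke the Rivi\`ere--Struwe gauge and the interior $L_2^p$ theory) is essentially the argument of that reference, and it also mirrors the parity bookkeeping the paper itself carries out in~\autoref{lem:bdry-reg}. The outline is correct; the two points that genuinely need to be written out are the one you flagged (that the weak equation persists across $\partial^0D_1$, which uses exactly the homogeneous Neumann/Dirichlet conditions on the respective components) and the construction of the lifting $H\in L_2^p$ realising $g$ as a \emph{normal derivative}, which requires identifying $L_{1,\partial}^p$ with the Besov trace space $B^{1-1/p}_{p,p}$ rather than merely taking an $L_1^p$ extension of $g$.
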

\begin{bibdiv}
\begin{biblist}

\bib{Alvarez-GaumeFreedman1981Geometrical}{article}{
      author={Alvarez-Gaum\'e, Luis},
      author={Freedman, Daniel~Z.},
       title={Geometrical structure and ultraviolet finiteness in the
  supersymmetric {$\sigma $}-model},
        date={1981},
        ISSN={0010-3616},
     journal={Comm. Math. Phys.},
      volume={80},
      number={3},
       pages={443\ndash 451},
         url={http://projecteuclid.org/euclid.cmp/1103919984},
      review={\MR{626710}},
}

\bib{BaggerWitten1982gauge}{article}{
      author={Bagger, Jonathan},
      author={Witten, Edward},
       title={The gauge invariant supersymmetric nonlinear sigma model},
        date={1982},
        ISSN={0031-9163},
     journal={Phys. Lett. B},
      volume={118},
      number={1-3},
       pages={103\ndash 106},
         url={https://doi.org/10.1016/0370-2693(82)90609-8},
      review={\MR{684194}},
}

\bib{Banfield2000Stable}{article}{
      author={Banfield, D.},
       title={Stable pairs and principal bundles},
        date={2000},
        ISSN={0033-5606},
     journal={Q. J. Math.},
      volume={51},
      number={4},
       pages={417\ndash 436},
         url={https://doi.org/10.1093/qjmath/51.4.417},
      review={\MR{1806450}},
}

\bib{BethuelBrezisHelein1994Ginzburg}{book}{
      author={Bethuel, Fabrice},
      author={Brezis, Ha\"\i~m},
      author={H\'elein, Fr\'ed\'eric},
       title={Ginzburg-{L}andau vortices},
      series={Progress in Nonlinear Differential Equations and their
  Applications},
   publisher={Birkh\"auser Boston, Inc., Boston, MA},
        date={1994},
      volume={13},
        ISBN={0-8176-3723-0},
         url={https://doi.org/10.1007/978-1-4612-0287-5},
      review={\MR{1269538}},
}

\bib{Bradlow1991Special}{article}{
      author={Bradlow, Steven~B.},
       title={Special metrics and stability for holomorphic bundles with global
  sections},
        date={1991},
        ISSN={0022-040X},
     journal={J. Differential Geom.},
      volume={33},
      number={1},
       pages={169\ndash 213},
         url={http://projecteuclid.org/euclid.jdg/1214446034},
      review={\MR{1085139}},
}

\bib{ChapmanHowisonOckendon1992Macroscopic}{article}{
      author={Chapman, S.~J.},
      author={Howison, S.~D.},
      author={Ockendon, J.~R.},
       title={Macroscopic models for superconductivity},
        date={1992},
        ISSN={0036-1445},
     journal={SIAM Rev.},
      volume={34},
      number={4},
       pages={529\ndash 560},
         url={https://doi.org/10.1137/1034114},
      review={\MR{1193011}},
}

\bib{CieliebakGaioSalamon2000holomorphic}{article}{
      author={Cieliebak, Kai},
      author={Gaio, Ana~Rita},
      author={Salamon, Dietmar~A.},
       title={{$J$}-holomorphic curves, moment maps, and invariants of
  {H}amiltonian group actions},
        date={2000},
        ISSN={1073-7928},
     journal={Internat. Math. Res. Notices},
      number={16},
       pages={831\ndash 882},
         url={https://doi.org/10.1155/S1073792800000453},
      review={\MR{1777853}},
}

\bib{DeGennes1966superconductivity}{book}{
      author={de~Gennes, Pierre-Gilles},
       title={Superconductivity of metals and alloys},
      series={Advanced book classics},
   publisher={Advanced Book Program, Perseus Books},
        date={1999},
        ISBN={9780738201016},
         url={https://books.google.com/books?id=xacsAAAAYAAJ},
}

\bib{Donaldson1985anti}{article}{
      author={Donaldson, S.~K.},
       title={Anti self-dual {Y}ang-{M}ills connections over complex algebraic
  surfaces and stable vector bundles},
        date={1985},
        ISSN={0024-6115},
     journal={Proc. London Math. Soc. (3)},
      volume={50},
      number={1},
       pages={1\ndash 26},
         url={https://doi.org/10.1112/plms/s3-50.1.1},
      review={\MR{765366}},
}

\bib{DuGunzburgerPeterson1992nalysis}{article}{
      author={Du, Qiang},
      author={Gunzburger, Max~D.},
      author={Peterson, Janet~S.},
       title={Analysis and approximation of the {G}inzburg-{L}andau model of
  superconductivity},
        date={1992},
        ISSN={0036-1445},
     journal={SIAM Rev.},
      volume={34},
      number={1},
       pages={54\ndash 81},
         url={https://doi.org/10.1137/1034003},
      review={\MR{1156289}},
}

\bib{Fraser2000free}{article}{
      author={Fraser, Ailana~M.},
       title={On the free boundary variational problem for minimal disks},
        date={2000},
        ISSN={0010-3640},
     journal={Comm. Pure Appl. Math.},
      volume={53},
      number={8},
       pages={931\ndash 971},
  url={http://dx.doi.org/10.1002/1097-0312(200008)53:8&lt;931::AID-CPA1&gt;3.3.CO;2-0},
      review={\MR{1755947}},
}

\bib{GulliverJost1987Harmonic}{article}{
      author={Gulliver, Robert},
      author={Jost, J\"urgen},
       title={Harmonic maps which solve a free-boundary problem},
        date={1987},
        ISSN={0075-4102},
     journal={J. Reine Angew. Math.},
      volume={381},
       pages={61\ndash 89},
      review={\MR{918841}},
}

\bib{JaffeTaubes1980Vortices}{book}{
      author={Jaffe, Arthur},
      author={Taubes, Clifford},
       title={Vortices and monopoles},
      series={Progress in Physics},
   publisher={Birkh\"auser, Boston, Mass.},
        date={1980},
      volume={2},
        ISBN={3-7643-3025-2},
        note={Structure of static gauge theories},
      review={\MR{614447}},
}

\bib{JostLiuZhu2016qualitative}{article}{
      author={{Jost}, Juergen},
      author={{Liu}, Lei},
      author={{Zhu}, Miaomiao},
       title={{The qualitative behavior at the free boundary for approximate
  harmonic maps from surfaces}},
        date={2018},
     journal={Mathematische Annalen},
         doi={10.1007/s00208-018-1759-8},
         url={https://doi.org/10.1007/s00208-018-1759-8},
}

\bib{LadyzhenskaiaUraltzeva1961smoothness}{article}{
      author={Ladyzhenskaya, O.~A.},
      author={Ural'ceva, N.~N.},
       title={On the smoothness of weak solutions of quasilinear equations in
  several variables and of variational problems},
        date={1961},
        ISSN={0010-3640},
     journal={Comm. Pure Appl. Math.},
      volume={14},
       pages={481\ndash 495},
         url={http://dx.doi.org/10.1002/cpa.3160140323},
      review={\MR{0149076}},
}

\bib{LadyzhenskayaUraltseva1968Linear}{book}{
      author={Ladyzhenskaya, O.~A.},
      author={Ural'ceva, N.~N.},
       title={Linear and quasilinear elliptic equations},
      series={Translated from the Russian by Scripta Technica, Inc. Translation
  editor: Leon Ehrenpreis},
   publisher={Academic Press, New York-London},
        date={1968},
      review={\MR{0244627}},
}

\bib{LiebLoss2001Analysis}{book}{
      author={Lieb, Elliott~H.},
      author={Loss, Michael},
       title={Analysis},
     edition={Second},
      series={Graduate Studies in Mathematics},
   publisher={American Mathematical Society, Providence, RI},
        date={2001},
      volume={14},
        ISBN={0-8218-2783-9},
         url={https://doi.org/10.1090/gsm/014},
      review={\MR{1817225}},
}

\bib{LinShen2018Gradient}{article}{
      author={{Lin}, A.},
      author={{Shen}, L.},
       title={{Gradient flow of the norm squared of a moment map over Kahler
  manifolds}},
        date={2018-02},
     journal={ArXiv e-prints},
      eprint={1802.09314},
}

\bib{LinYang2003Gauged}{article}{
      author={Lin, Fanghua},
      author={Yang, Yisong},
       title={Gauged harmonic maps, {B}orn-{I}nfeld electromagnetism, and
  magnetic vortices},
        date={2003},
        ISSN={0010-3640},
     journal={Comm. Pure Appl. Math.},
      volume={56},
      number={11},
       pages={1631\ndash 1665},
         url={https://doi.org/10.1002/cpa.10106},
      review={\MR{1995872}},
}

\bib{LuPan1996GinzburgLandau}{article}{
      author={Lu, Kening},
      author={Pan, Xing-Bin},
       title={Ginzburg-{L}andau equation with {D}e{G}ennes boundary condition},
        date={1996},
        ISSN={0022-0396},
     journal={J. Differential Equations},
      volume={129},
      number={1},
       pages={136\ndash 165},
         url={https://doi.org/10.1006/jdeq.1996.0114},
      review={\MR{1400799}},
}

\bib{Ma1991Harmonic}{article}{
      author={Ma, Li},
       title={Harmonic map heat flow with free boundary},
        date={1991},
        ISSN={0010-2571},
     journal={Comment. Math. Helv.},
      volume={66},
      number={2},
       pages={279\ndash 301},
         url={https://doi.org/10.1007/BF02566648},
      review={\MR{1107842}},
}

\bib{Marini1992Dirichlet}{article}{
      author={Marini, Antonella},
       title={Dirichlet and {N}eumann boundary value problems for
  {Y}ang-{M}ills connections},
        date={1992},
        ISSN={0010-3640},
     journal={Comm. Pure Appl. Math.},
      volume={45},
      number={8},
       pages={1015\ndash 1050},
         url={http://dx.doi.org/10.1002/cpa.3160450806},
      review={\MR{1168118}},
}

\bib{MooreSchlafly1980equivariant}{article}{
      author={Moore, John~Douglas},
      author={Schlafly, Roger},
       title={On equivariant isometric embeddings},
        date={1980},
        ISSN={0025-5874},
     journal={Math. Z.},
      volume={173},
      number={2},
       pages={119\ndash 133},
         url={https://doi.org/10.1007/BF01159954},
      review={\MR{583381}},
}

\bib{Morrey2008Multiple}{book}{
      author={Morrey, Charles~B., Jr.},
       title={Multiple integrals in the calculus of variations},
      series={Classics in Mathematics},
   publisher={Springer-Verlag, Berlin},
        date={2008},
        ISBN={978-3-540-69915-6},
         url={http://dx.doi.org/10.1007/978-3-540-69952-1},
         note={Reprint of the 1966 edition [\MR{0202511}]},
      review={\MR{2492985}},
}

\bib{Riera2000HitchinKobayashi}{article}{
      author={Mundet~i Riera, Ignasi},
       title={A {H}itchin-{K}obayashi correspondence for {K}\"ahler
  fibrations},
        date={2000},
        ISSN={0075-4102},
     journal={J. Reine Angew. Math.},
      volume={528},
       pages={41\ndash 80},
         url={https://doi.org/10.1515/crll.2000.092},
      review={\MR{1801657}},
}

\bib{Riera2003Hamiltonian}{article}{
      author={Mundet~i Riera, Ignasi},
       title={Hamiltonian {G}romov-{W}itten invariants},
        date={2003},
        ISSN={0040-9383},
     journal={Topology},
      volume={42},
      number={3},
       pages={525\ndash 553},
         url={https://doi.org/10.1016/S0040-9383(02)00023-X},
      review={\MR{1953239}},
}

\bib{Nagy2017Irreducible}{article}{
      author={Nagy, \'{A}kos},
       title={Irreducible {G}inzburg-{L}andau fields in dimension 2},
        date={2018},
        ISSN={1050-6926},
     journal={J. Geom. Anal.},
      volume={28},
      number={2},
       pages={1853\ndash 1868},
         url={https://doi.org/10.1007/s12220-017-9890-4},
      review={\MR{3790522}},
}

\bib{SacksUhlenbeck1981existence}{article}{
      author={Sacks, J.},
      author={Uhlenbeck, K.},
       title={The existence of minimal immersions of {$2$}-spheres},
        date={1981},
        ISSN={0003-486X},
     journal={Ann. of Math. (2)},
      volume={113},
      number={1},
       pages={1\ndash 24},
         url={http://dx.doi.org/10.2307/1971131},
      review={\MR{604040}},
}

\bib{Scheven2006Partial}{article}{
      author={Scheven, Christoph},
       title={Partial regularity for stationary harmonic maps at a free
  boundary},
        date={2006},
        ISSN={0025-5874},
     journal={Math. Z.},
      volume={253},
      number={1},
       pages={135\ndash 157},
         url={http://dx.doi.org/10.1007/s00209-005-0891-9},
      review={\MR{2206640}},
}

\bib{SharpZhu2016Regularity}{article}{
      author={Sharp, Ben},
      author={Zhu, Miaomiao},
       title={Regularity at the free boundary for {D}irac-harmonic maps from
  surfaces},
        date={2016},
        ISSN={0944-2669},
     journal={Calc. Var. Partial Differential Equations},
      volume={55},
      number={2},
       pages={Paper No. 27, 30},
         url={http://dx.doi.org/10.1007/s00526-016-0960-4},
      review={\MR{3465443}},
}

\bib{Song2011CriticalYangMillsHiggs}{article}{
      author={Song, Chong},
       title={Critical points of {Y}ang-{M}ills-{H}iggs functional},
        date={2011},
        ISSN={0219-1997},
     journal={Commun. Contemp. Math.},
      volume={13},
      number={3},
       pages={463\ndash 486},
         url={http://dx.doi.org/10.1142/S0219199711004403},
      review={\MR{2813498}},
}

\bib{Song2016Convergence}{article}{
      author={Song, Chong},
       title={Convergence of {Y}ang-{M}ills-{H}iggs fields},
        date={2016},
        ISSN={0025-5831},
     journal={Math. Ann.},
      volume={366},
      number={1-2},
       pages={167\ndash 217},
         url={https://doi.org/10.1007/s00208-015-1321-x},
      review={\MR{3552237}},
}

\bib{SongWang2017Heat}{article}{
      author={Song, Chong},
      author={Wang, Changyou},
       title={Heat flow of {Y}ang-{M}ills-{H}iggs functionals in dimension
  two},
        date={2017},
        ISSN={0022-1236},
     journal={J. Funct. Anal.},
      volume={272},
      number={11},
       pages={4709\ndash 4751},
         url={https://doi.org/10.1016/j.jfa.2017.02.002},
      review={\MR{3630638}},
}

\bib{Taubes1982existenceI}{article}{
      author={Taubes, Clifford~Henry},
       title={The existence of a nonminimal solution to the {${\rm SU}(2)$}\
  {Y}ang-{M}ills-{H}iggs equations on {${\bf R}\sp{3}$}. {I}},
        date={1982},
        ISSN={0010-3616},
     journal={Comm. Math. Phys.},
      volume={86},
      number={2},
       pages={257\ndash 298},
         url={http://projecteuclid.org/euclid.cmp/1103921702},
      review={\MR{676188}},
}

\bib{Taubes1982existenceII}{article}{
      author={Taubes, Clifford~Henry},
       title={The existence of a nonminimal solution to the {${\rm SU}(2)$}\
  {Y}ang-{M}ills-{H}iggs equations on {${\bf R}\sp{3}$}. {II}},
        date={1982},
        ISSN={0010-3616},
     journal={Comm. Math. Phys.},
      volume={86},
      number={3},
       pages={299\ndash 320},
         url={http://projecteuclid.org/euclid.cmp/1103921771},
      review={\MR{677000}},
}

\bib{UhlenbeckYau1986existence}{article}{
      author={Uhlenbeck, K.},
      author={Yau, S.-T.},
       title={On the existence of {H}ermitian-{Y}ang-{M}ills connections in
  stable vector bundles},
        date={1986},
        ISSN={0010-3640},
     journal={Comm. Pure Appl. Math.},
      volume={39},
      number={S, suppl.},
       pages={S257\ndash S293},
         url={https://doi.org/10.1002/cpa.3160390714},
        note={Frontiers of the mathematical sciences: 1985 (New York, 1985)},
      review={\MR{861491}},
}

\bib{Uhlenbeck1982Connections}{article}{
      author={Uhlenbeck, Karen~K.},
       title={Connections with {$L\sp{p}$}\ bounds on curvature},
        date={1982},
        ISSN={0010-3616},
     journal={Comm. Math. Phys.},
      volume={83},
      number={1},
       pages={31\ndash 42},
         url={http://projecteuclid.org/euclid.cmp/1103920743},
      review={\MR{648356}},
}

\bib{Urakawa1993Calculus}{book}{
      author={Urakawa, Hajime},
       title={Calculus of variations and harmonic maps},
      series={Translations of Mathematical Monographs},
   publisher={American Mathematical Society, Providence, RI},
        date={1993},
      volume={132},
        ISBN={0-8218-4581-0},
        note={Translated from the 1990 Japanese original by the author},
      review={\MR{1252178}},
}

\bib{Venugopalan2016YangMills}{article}{
      author={Venugopalan, Sushmita},
       title={Yang-{M}ills heat flow on gauged holomorphic maps},
        date={2016},
        ISSN={1527-5256},
     journal={J. Symplectic Geom.},
      volume={14},
      number={3},
       pages={903\ndash 981},
         url={https://doi.org/10.4310/JSG.2016.v14.n3.a7},
      review={\MR{3548487}},
}

\bib{Wehrheim2004Uhlenbeck}{book}{
      author={Wehrheim, Katrin},
       title={Uhlenbeck compactness},
      series={EMS Series of Lectures in Mathematics},
   publisher={European Mathematical Society (EMS), Z\"urich},
        date={2004},
        ISBN={3-03719-004-3},
         url={http://dx.doi.org/10.4171/004},
      review={\MR{2030823}},
}

\bib{Witten1993Phases}{article}{
      author={Witten, Edward},
       title={Phases of {$N=2$} theories in two dimensions},
        date={1993},
        ISSN={0550-3213},
     journal={Nuclear Phys. B},
      volume={403},
      number={1-2},
       pages={159\ndash 222},
         url={https://doi.org/10.1016/0550-3213(93)90033-L},
      review={\MR{1232617}},
}

\bib{Xu2013moduli}{article}{
      author={Xu, Guangbo},
       title={The moduli space of twisted holomorphic maps with {L}agrangian
  boundary condition: compactness},
        date={2013},
        ISSN={0001-8708},
     journal={Adv. Math.},
      volume={242},
       pages={1\ndash 49},
         url={https://doi.org/10.1016/j.aim.2013.04.011},
      review={\MR{3055986}},
}

\bib{Yu2014gradient}{article}{
      author={Yu, Yong},
       title={The gradient flow for gauged harmonic map in dimension two {II}},
        date={2014},
        ISSN={0944-2669},
     journal={Calc. Var. Partial Differential Equations},
      volume={50},
      number={3-4},
       pages={883\ndash 924},
         url={https://doi.org/10.1007/s00526-013-0660-2},
      review={\MR{3216838}},
}

\bib{Zhu2010Harmonic}{article}{
      author={Zhu, Miaomiao},
       title={Harmonic maps from degenerating {R}iemann surfaces},
        date={2010},
        ISSN={0025-5874},
     journal={Math. Z.},
      volume={264},
      number={1},
       pages={63\ndash 85},
         url={https://doi.org/10.1007/s00209-008-0452-0},
      review={\MR{2564932}},
}

\end{biblist}
\end{bibdiv}

   \end{document}